\newcommand{\comment}[1]{}
\theoremstyle{theorem}
    \newtheorem{theorem}{Theorem}
    \newtheorem{lemma}[theorem]{Lemma}
    \newtheorem{corollary}[theorem]{Corollary}
    \newtheorem{definition}[theorem]{Definition}
\theoremstyle{definition}
    \newtheorem{remark}[theorem]{Remark}
    \newtheorem{example}[theorem]{Example}
    \newtheorem{exercise}[theorem]{Exercise}
    \newtheorem{lem}[theorem]{Lemma}
\newcommand\mnote[1]{} 
\newcommand\be{\begin{equation*}}
\newcommand\ee{\end{equation*}}
\newcommand\ben{\begin{equation}}
\newcommand\een{\end{equation}}
\newcommand\bes{\begin{eqnarray*}}
\newcommand\ees{\end{eqnarray*}}
\newcommand\bex{\begin{exercise}}
\newcommand\eex{\end{exercise}}
\newcommand\beg{\begin{example}}
\newcommand\eeg{\end{example}}
\newcommand\benu{\begin{enumerate}}
\newcommand\eenu{\end{enumerate}}
\newcommand\beit{\begin{itemize}}
\newcommand\eeit{\end{itemize}}
\newcommand\berk{\begin{remark}}
\newcommand\eerk{\end{remark}}
\newcommand\bdefn{\begin{defintion}}
\newcommand\edefn{\end{definition}}
\newcommand\bthm{\begin{theorem}}
\newcommand\ethm{\end{theorem}}
\newcommand\bprf{\begin{proof}}
\newcommand\eprf{\end{proof}}
\newcommand\blem{\begin{lemma}}
\newcommand\elem{\end{lemma}}
\newcommand{\sm}{{\raise0.3ex\hbox{$\scriptstyle \setminus$}}}
\def\CHI{\mathchoice%
{\raise2pt\hbox{$\chi$}}%
{\raise2pt\hbox{$\chi$}}%
{\raise1.3pt\hbox{$\scriptstyle\chi$}}%
{\raise0.8pt\hbox{$\scriptscriptstyle\chi$}}}
\def\smalloplus{\raise1pt\hbox{$\,\scriptstyle \oplus\;$}}
\numberwithin{equation}{section}
\begin{document}
\title[And\^o dilations]{And\^{o} dilations for a pair of commuting contractions: two explicit constructions and functional models}
\author[Sau]{Haripada Sau}
\address{Department of Mathematics, Virginia Tech, Blacksburg, VA 24061-0123, USA\\ sau@vt.edu, haripadasau215@gmail.com}
\address{\tiny Dedicated to Professor Joseph A. Ball, a leading operator theorist, on the occasion of his 70th birthday. }
\subjclass[2010]{Primary: 47A13. Secondary: 47A20, 47A25, 47A56, 47A68, 30H10}
\keywords{Sch\"affer dilation, Douglas dilation, And\^o dilation, Functional model}
\thanks{This research is supported by a Post Doctoral Fellowship at Indian Institute of Technology Bombay and SERB Indo-US Postdoctoral Research Fellowship, 2017.}
\begin{abstract}
One of the most important results in operator theory is And\^o's \cite{ando} generalization of dilation theory for a single contraction to a pair of commuting contractions acting on a Hilbert space. While there are two explicit constructions (Sch\"affer \cite{sfr} and Douglas \cite{Doug-Dilation}) of the minimal isometric dilation of a single contraction, there was no such explicit construction of an And\^o dilation for a commuting pair $(T_1,T_2)$ of contractions, except in some special cases \cite{A-M-Dist-Var, D-S, D-S-S}. In this paper, we give two new proofs of And\^o's dilation theorem by giving both Sch\"affer-type and Douglas-type explicit constructions of an And\^o dilation with function-theoretic interpretation, for the general case. The results, in particular, give a complete description of all possible factorizations of a given contraction $T$ into the product of two commuting contractions. Unlike the one-variable case, two minimal And\^o dilations need not be unitarily equivalent. However, we show that the compressions of the two And\^o dilations constructed in this paper to the minimal dilation spaces of the contraction $T_1T_2$, are unitarily equivalent.

In the special case when the product $T=T_1T_2$ is pure, i.e., if $T^{* n}\to 0$ strongly, an And\^o dilation was constructed recently in \cite{D-S-S}, which, as this paper will show, is a corollary to the Douglas-type construction. We also show that their construction in this special case can be derived from a previous result obtained in \cite{sau}.

We define a notion of characteristic triple for a pair of commuting contractions and a notion of coincidence for such triples. We prove that two pairs of commuting contractions with their products being pure contractions are unitarily equivalent if and only if their characteristic triples coincide. We also characterize triples which qualify as the characteristic triple for some pair $(T_1,T_2)$ of commuting contractions such that $T_1T_2$ is a pure contraction.

\end{abstract}
\maketitle

\section{Introduction}
A result by Sz.-Nagy \cite{sz-nagy} that has influenced the development of operator theory greatly is that for every contraction $T$ acting on a Hilbert space $\mathcal H$, there exists an isometry $V$ acting on a Hilbert space $\mathcal K$ containing $\mathcal H$ such that $V^*|_{\mathcal H}=T^*$. A decade later, And\^o in his remarkable paper \cite{ando} extended this classical result of Sz.-Nagy to two variables by giving an enigmatic construction of a pair of commuting isometries $(V_1,V_2)$ for a pair of commuting contractions $(T_1,T_2)$ such that $(V_1,V_2)$ is a co-extension of $(T_1,T_2)$. Before we proceed further, we define the central topic of this paper.
\begin{definition}
Let $\underline{T}=(T_1,T_2,\dots,T_n)$ be a commuting $n$-tuple of operators on a Hilbert space $\mathcal H$. An $n$-tuple of commuting operators $\underline{V}=(V_1,V_2,\dots,V_n)$ on a Hilbert space $\mathcal K$ is called a {\it{dilation}} of $\underline{T}$, if there exists an isometry $\Pi:\mathcal{H}\to\mathcal{K}$ such that
$$
\Pi T_i^*=V_i^*\Pi, \text{ for every } i=1,2,\dots,n.
$$Moreover, the dilation $(\Pi,\underline{V})$  of $\underline{T}$ is said to be {\it{minimal}} if
\begin{eqnarray}\label{minimal-dilation}
\mathcal K=\overline{\text{span}}\{V_1^{m_1}V_2^{m_2}\cdots V_n^{m_n}\Pi h: m_i \geq 0 \text{ for each } 1\leq i \leq n,\; h\in \mathcal H\}.
\end{eqnarray}
\end{definition}When it is clear from the context what the isometry $\Pi$ is, we omit it. And\^o's theorem sparked a great deal of research in Mathematics, see \cite{Ando-triple, Archer, BBM, Cole-Wermer, Gadidov, Li-Ti, Parrott, Popescu} and references therein. However, an explicit construction of And\^o dilation with function theoretic interpretation has been lacking. And, only under substantial assumptions on the pair $(T_1,T_2)$, an And\^o dilation was constructed in the papers \cite{A-M-Dist-Var}, \cite{D-S} and \cite{D-S-S}. On the other hand, there are two concrete constructions \cite{sfr,Doug-Dilation} of the minimal isometric dilation of a single contraction. We shall recall both of these constructions here and give two-variable analogues of these classical constructions of dilation. In other words, we give two new proofs of And\^o's dilation theorem.

Note that if $\underline{V}=(V_1,V_2)$ on $\mathcal{K}$ is an And\^o dilation of a pair $(T_1,T_2)$ of commuting contractions on $\mathcal{H}$, then $V_1V_2$ is an isometric dilation of $T_1T_2$ and in general, $V_1V_2$ need not be the minimal isometric dilation of $T_1T_2$. In other words, $\mathcal{K}$, in general, is bigger than
$$
\mathcal{K}^{\text{min}}_{\underline{V}}:=\overline{\text {span}}\{V_1^mV_2^m\Pi h:h\in\mathcal{H}\text{ and }m\geq 0 \}.
$$While any two minimal isometric dilations of a single contraction are unitarily equivalent \cite{Nagy-Foias}, minimality in several variables does not yield uniqueness up to unitary equivalence. However, we prove that for a given pair $(T_1,T_2)$ of commuting contractions on a Hilbert space $\mathcal{H}$, the two And\^o dilations $\underline{V}=(V_1,V_2)$ and $\underline{V'}=(V_1', V_2')$ constructed here are such that when compressed to the respective minimal spaces  $\mathcal{K}^{\text{min}}_{\underline{V}}$ and $\mathcal{K}^{\text{min}}_{\underline{V'}}$, they are unitarily equivalent, i.e.,
$$
P_{\mathcal{K}^{\text{min}}_{\underline{V}}}(V_1,V_2)|_{\mathcal{K}^{\text{min}}_{\underline{V}}} \text{ is unitarily equivalent to }P_{\mathcal{K}^{\text{min}}_{\underline{V'}}}(V_1',V'_2)|_{\mathcal{K}^{\text{min}}_{\underline{V'}}},
$$where $P_{\mathcal{K}^{\text{min}}_{\underline{V}}}$ and $P_{\mathcal{K}^{\text{min}}_{\underline{V'}}}$ denote the projections onto $\mathcal{K}^{\text{min}}_{\underline{V}}$ and $\mathcal{K}^{\text{min}}_{\underline{V'}}$, respectively.

\subsection{Sch\"affer model for And\^o dilation}
A couple of years after Sz.-Nagy proved his dilation theorem, Sch$\ddot{\text a}$ffer in \cite{sfr} gave the first explicit construction of a minimal isometric dilation of a contraction. An interesting application of this concrete construction is that it gives a constructive proof the famous commutant lifting theorem, see \cite{DMP} . Sch\"affer showed that if $T$ is a contraction on a Hilbert space $\mathcal H$, then the operator $V_S:\mathcal H\oplus H^2(\mathcal D_T)\to\mathcal H\oplus H^2(\mathcal D_T)$ given by
\begin{eqnarray}\label{Schef}
V_S:=\left(
     \begin{array}{cc}
      T & 0 \\
      D_T & M_z \\
     \end{array}
   \right),
\end{eqnarray}is an isometry (obviously a dilation of $T$). Here, for a contraction $T$, the space $\mathcal D_T$ is the closure of the range of the defect operator $D_T:=(I-T^*T)^{1/2}$ of $T$. The operator $M_z$ is the `forward shift' on $H^2(\mathcal{D}_T)$. The operator in the $(2,1)$-entry of the matrix in (\ref{Schef}) should be viewed as the constant function $z\mapsto D_Th$ in $H^2(\mathcal D_T)$, when applied to an element $h$ of $\mathcal H$ (this convention is taken up throughout the paper). For a contraction $T$, the notation $V_S$ in this paper will always denote the matrix in (\ref{Schef}). For a Hilbert space $\mathcal F$, the notation $H^2(\mathcal F)$ denotes the Hilbert space consisting of $\mathcal F$-valued analytic functions on the unit disk $\mathbb D$ for which the coefficients (belonging to $\mathcal F$) of its Taylor series expansion around the origin, are norm-square summable. Note that $H^2\otimes \mathcal F$ is another realization of $H^2(\mathcal F)$, where $H^2$ is the Hardy space over the unit disk. For $\varphi$ in $H^\infty(\mathcal B(\mathcal F))$, the algebra of $\mathcal B(\mathcal F)$-valued bounded analytic functions on $\mathbb D$, let $M_\varphi$ denote the bounded operator on $H^2(\mathcal F)$ defined by
$$
M_\varphi f(z)=\varphi(z)f(z), \text{ for all }f\in H^2(\mathcal F)\text{ and }z\in \mathbb D.
$$
Our first construction of And\^o dilation is Sch\"affer-type, which we now describe. See \S \ref{FutureResearch} for a possible application of this construction.

Let $(T_1,T_2)$ be a pair of commuting contractions and $T=T_1T_2$. We show that the space on which the dilation pair $(V_1,V_2)$ acts, can be chosen to be of the form $\mathcal K_S:=\mathcal H\oplus H^2(\mathcal F)$ for some Hilbert space $\mathcal F$ containing an isometric copy of $\mathcal{D}_T$, where
\begin{enumerate}
\item[(i)]$\mathcal F=\mathcal D_{T_1}\oplus\mathcal D_{T_2}$, if $\text{dim}(\mathcal D_{T_1}\oplus\mathcal D_{T_2})<\infty$ and
\item[(ii)]$\mathcal F=\mathcal D_{T_1}\oplus\big(\mathcal D_{T_2}\oplus l^2\big)$, (possibly) if $\text{dim}(\mathcal D_{T_1}\oplus\mathcal D_{T_2})=\infty$.
\end{enumerate} And\^o's construction was an influential result at the time but has some disadvantages: it does not lead to an explicit identification of a minimal dilation $(V_1,V_2)$ nor to any function-theoretic interpretation. However, we show that the dilation pair can be constructed in a way to have the following interesting structure:
$$
(V_1,V_2)|_{H^2(\mathcal F)}=(M_{P^\perp U+zP U},M_{U^*P+zU^*P^\perp}),
$$for some unitary $U$ and projection $P$ in $\mathcal B(\mathcal F)$.

Moreover, this construction leads to a minimal dilation in the following sense weaker than (\ref{minimal-dilation}). We find an isometry $\Lambda:\mathcal D_T\to \mathcal F$ and show that
\begin{eqnarray}\label{minimality}
\Pi_\Lambda^*V_1V_2\Pi_\Lambda=V_S,
 \end{eqnarray}where $\Pi_\Lambda:\mathcal{H}\oplus(H^2\otimes\mathcal{D}_T)\to\mathcal{H}\oplus(H^2\otimes\mathcal{F})$ is the isometry defined by
\begin{eqnarray}\label{Pi-Lambda}
\Pi_\Lambda:=I_{\mathcal H}\oplus \left(I_{H^2}\otimes \Lambda \right).
\end{eqnarray}
In $2\times 2$ block operator matrix representation with respect to the decomposition $\mathcal{H}\oplus H^2(\mathcal{F})$, the Sch\"affer-type dilation pair $(V^S_1,V^S_2)$ is the following:
$$
\left(\left(
     \begin{array}{cc}
      T_1 & 0 \\
      PU\Lambda D_T & M_{P^\perp U+zPU} \\
     \end{array}
   \right),\left(
     \begin{array}{cc}
     T_2 & 0 \\
      U^*P^\perp \Lambda D_T &  M_{U^*P+zU^*P^\perp } \\
     \end{array}
   \right)\right).
$$
This is the content of the following theorem  -- the first main result of this paper, which in particular describes all possible factorizations of a given contraction into the product of two commuting contractions.
\begin{theorem}[Sch\"affer model]\label{themainresult}
Let $(T_1,T_2,T)$ be a triple of contraction operators on a Hilbert space $\mathcal H$. Then the following are equivalent:
\begin{enumerate}
\item[({\bf P})] $(T_1,T_2)$ is commuting and $T$ is the product of $T_1$ and $T_2$;
\item[({\bf A})] There exists a Hilbert space $\mathcal F$, a commuting pair $(V^S_1,V^S_2)$ of isometries on $\mathcal{K}_S=\mathcal H\oplus H^2(\mathcal F)$ and an isometry $\Lambda:\mathcal D_T \to \mathcal F$ such that
$$
(V_1^{S *},V_2^{S *})|_{\mathcal H}=(T_1^*,T_2^*),\; (V_1^S,V_2^S)|_{H^2(\mathcal F)}=(M_\varphi,M_\psi) \text{ and } \;\Pi_\Lambda^*V_1^SV_2^S\Pi_\Lambda=V_S
$$where $\Pi_\Lambda:\mathcal H\oplus H^2(\mathcal D_T)\to\mathcal H\oplus H^2(\mathcal F)$ is the isometry as in (\ref{Pi-Lambda}) and $$\varphi(z)=(P^\perp U+zP U), \; \psi(z)=U^*P+zU^*P^\perp$$ are inner functions for some unitary $U$ and projection $P$ in $\mathcal B(\mathcal F)$;
\item[({\bf S})]There exists a pair of contractions $(S_1,S_2)$ on $\mathcal H\oplus H^2(\mathcal D_T)$ such that
\begin{eqnarray*}
(S_1^*,S_2^*)|_{\mathcal H}=(T_1^*,T_2^*),\;V_S^*|_{\mathcal H}=S_1^*S_2^*|_{\mathcal H}=S_2^*S_1^*|_{\mathcal H}\text{ and } \\(S_1,S_2)|_{H^2(\mathcal D_T)}=(M_\Phi,M_\Psi),
\end{eqnarray*}where $\Phi$ and $\Psi$ are some one-degree contractive $\mathcal B({{\mathcal D_T}})$-valued polynomials.

Moreover, $\Phi$ and $\Psi$ can be chosen to be $$\Phi(z)=F_1+zF_2^* \text{ and } \Psi(z)=F_2+zF_1^*$$ where $(F_1,F_2)=(\Lambda^*P^\perp U\Lambda,\Lambda^*U^*P\Lambda)$, the isometry $\Lambda$, the projection $P$ and the unitary $U$ are as in part (\bf{A}).
\end{enumerate}
\end{theorem}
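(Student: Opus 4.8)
The plan is to establish the three-way equivalence by the cycle $(\mathbf{P})\Rightarrow(\mathbf{A})\Rightarrow(\mathbf{S})\Rightarrow(\mathbf{P})$, with the construction $(\mathbf{P})\Rightarrow(\mathbf{A})$ carrying essentially all of the work and the other two implications being compression/restriction arguments.

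For $(\mathbf{P})\Rightarrow(\mathbf{A})$ I would begin from the two defect identities that expansion of $I-T^*T$ in the two orders yields from $T=T_1T_2=T_2T_1$: for every $h\in\mathcal H$,
\[
\|D_Th\|^2=\|D_{T_1}h\|^2+\|D_{T_2}T_1h\|^2=\|D_{T_1}T_2h\|^2+\|D_{T_2}h\|^2 .
\]
These say exactly that $\Gamma_1\colon D_Th\mapsto(D_{T_1}h,\,D_{T_2}T_1h)$ and $\Gamma_2\colon D_Th\mapsto(D_{T_1}T_2h,\,D_{T_2}h)$ extend to isometries $\mathcal D_T\to\mathcal D_{T_1}\oplus\mathcal D_{T_2}$. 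I would then take $\mathcal F=\mathcal D_{T_1}\oplus\mathcal D_{T_2}$ (enlarging the second summand by an $\ell^2$ when this space is infinite dimensional), put $\Lambda=\Gamma_2$, let $P$ be the orthogonal projection of $\mathcal F$ onto $\mathcal D_{T_1}$, and let $U$ be a unitary extension of the partial isometry that sends $\operatorname{ran}\Gamma_2$ onto $\operatorname{ran}\Gamma_1$, so that $U\Lambda=\Gamma_1$.

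With these choices the verifications become algebraic. One checks directly that $\varphi(z)=P^\perp U+zPU$ and $\psi(z)=U^*P+zU^*P^\perp$ satisfy $\varphi\psi=\psi\varphi=zI_{\mathcal F}$ and are inner, so $(M_\varphi,M_\psi)$ is a commuting pair of isometries with product $M_z$. The isometry conditions for the block operators reduce, via the defect identities, to $(PU\Lambda D_T)^*(PU\Lambda D_T)=D_{T_1}^2$ and $(U^*P^\perp\Lambda D_T)^*(U^*P^\perp\Lambda D_T)=D_{T_2}^2$, which hold because $U\Lambda D_T=\Gamma_1D_T=(D_{T_1},D_{T_2}T_1)$ and $P^\perp\Lambda D_T=(0,D_{T_2})$; the off-diagonal isometry relations vanish since $\varphi(0)=P^\perp U$ and $\psi(0)=U^*P$ annihilate the relevant ranges of $P$ and $P^\perp$. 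Commutativity of $(V_1^S,V_2^S)$ is the one genuinely two-variable identity, namely the equality of $(2,1)$-blocks
\[
PU\Lambda D_TT_2+P^\perp\Lambda D_T=U^*P^\perp\Lambda D_TT_1+U^*PU\Lambda D_T ,
\]
and this collapses once one uses $U\Lambda=\Gamma_1$: the left side equals $\Gamma_2D_T=\Lambda D_T$, while the right side equals $U^*\Gamma_1D_T=\Lambda D_T$. The same computation identifies the $(2,1)$-block of $V_1^SV_2^S$ as the constant $\Lambda D_T$, so compressing by $\Pi_\Lambda=I_{\mathcal H}\oplus(I_{H^2}\otimes\Lambda)$ and using $\Lambda^*\Lambda=I$ yields $\Pi_\Lambda^*V_1^SV_2^S\Pi_\Lambda=V_S$, as required.

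The implication $(\mathbf{A})\Rightarrow(\mathbf{S})$ I would obtain by compression: set $S_i=\Pi_\Lambda^*V_i^S\Pi_\Lambda$, contractions since $\Pi_\Lambda$ is isometric and $V_i^S$ are isometries. As each $V_i^S$ is block lower triangular with $\mathcal H$ co-invariant, $S_i^*|_{\mathcal H}=T_i^*$, hence $S_1^*S_2^*|_{\mathcal H}=S_2^*S_1^*|_{\mathcal H}=T^*=V_S^*|_{\mathcal H}$, and on $H^2(\mathcal D_T)$ the symbol computation $(I_{H^2}\otimes\Lambda)^*M_\varphi(I_{H^2}\otimes\Lambda)=M_\Phi$ gives $\Phi(z)=F_1+zF_2^*$ and $\Psi(z)=F_2+zF_1^*$ with $(F_1,F_2)=(\Lambda^*P^\perp U\Lambda,\Lambda^*U^*P\Lambda)$. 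Finally $(\mathbf{S})\Rightarrow(\mathbf{P})$ is immediate: $S_i^*|_{\mathcal H}=T_i^*$ makes $\mathcal H$ invariant under $S_i^*$, so $S_1^*S_2^*|_{\mathcal H}=T_1^*T_2^*$ and $S_2^*S_1^*|_{\mathcal H}=T_2^*T_1^*$, and equating both to $V_S^*|_{\mathcal H}=T^*$ gives $T_1^*T_2^*=T_2^*T_1^*=T^*$, i.e. $(T_1,T_2)$ commute and $T=T_1T_2$. I expect the main obstacle to be the existence of $U$ in the infinite-dimensional case: the partial isometry $\Gamma_1\Gamma_2^{-1}$ extends to a unitary precisely when $\mathcal F\ominus\operatorname{ran}\Gamma_1$ and $\mathcal F\ominus\operatorname{ran}\Gamma_2$ have equal dimension, automatic when $\dim\mathcal F<\infty$ but in general forcing the $\ell^2$-enlargement of case (ii); the only other delicate point is the commutativity identity above, which is where the hypothesis $T_1T_2=T_2T_1$ is actually consumed.
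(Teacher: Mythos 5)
Your proposal is correct and follows essentially the same route as the paper: the same isometry $\Lambda$ (your $\Gamma_2$), the same unitary $U$ with $U\Lambda=\Gamma_1$ extended via an $\ell^2$-enlargement when needed, the same block-matrix dilation with commutativity reduced to the $(2,1)$-entry identity and to $\varphi\psi=\psi\varphi=zI_{\mathcal F}$, and the same compression $S_i=\Pi_\Lambda^*V_i^S\Pi_\Lambda$ for $(\mathbf{A})\Rightarrow(\mathbf{S})$. The only cosmetic differences are that you verify $\varphi\psi=zI_{\mathcal F}$ directly rather than via the paper's lemma characterizing pairs $(P^\perp U,U^*P)$, and that you spell out $(\mathbf{S})\Rightarrow(\mathbf{P})$, which the paper dismisses as obvious.
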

Note that ({\bf P})$\Rightarrow$({\bf A}) of the above theorem gives an And\^o dilation for the pair $(T_1,T_2)$ while ({\bf A})$\Rightarrow$({\bf S}) describes the avatar of the dilation pair on the Sch$\ddot{\text a}$ffer dilation space.

\subsection{Douglas model for And\^o dilation}
There is another elegant construction of a minimal isometric dilation of a contraction by R. G. Douglas, see \S 4 of \cite{Doug-Dilation}. We describe it below. For a contraction $T$ acting on a Hilbert space $\mathcal{H}$, one always has
$$I_{\mathcal{H}}\geq TT^*\geq T^2{T^*}^2\geq\cdots\geq T^n{T^*}^n\geq\cdots\geq 0,$$ which implies that there exists a positive operator $Q$ such that $Q^2:=SOT\lim T^n{T^*}^n$. An immediate observation one makes about $Q$ is that $TQ^2T^*=Q^2$, which indicates that there exists an isometry $X^*$ from $\overline{Ran Q}$ into itself such that
\begin{eqnarray}\label{TheXIntro}
X^*Q=QT^*.
\end{eqnarray} Let $W^*$ on $\mathcal{R}\supseteq \overline{Ran Q}$ be the minimal unitary extension of $X^*$.
Define the operator $\mathcal{O}:\mathcal{H}\to H^2(\mathcal{D}_{T^*})$ by
\begin{eqnarray}\label{the-iso}
\mathcal O(h):=\sum_{n=0}^{\infty} z^n D_{T^*}{T^*}^nh \text{ for all $h \in \mathcal{H}$}.
\end{eqnarray}The operator $\mathcal O$ is called the {\em{observability operator}}, see \cite{BBF}. It was observed in \cite{Doug-Dilation} that the isometry $\Pi_{D}:\mathcal{H}\to H^2(\mathcal D_{T^*})\oplus \mathcal{R}$ defined by
$$
\Pi_{D}( h):= \mathcal O (h)\oplus Q(h),
$$has the following intertwining property
\begin{eqnarray}\label{The-NF}
\Pi_{D}T^*=(M_z\oplus W)^*\Pi_{D}.
\end{eqnarray}
Therefore the operator $V_{D}:=M_z\oplus W$ on $H^2(\mathcal D_{T^*})\oplus \mathcal{R}$ is an isometric dilation of $T$. It is shown to be minimal too, see Lemma 1 in  \cite{Doug-Dilation}.

As before, let $(T_1,T_2)$ be a pair of commuting contractions on a Hilbert space $\mathcal{H}$ and $T=T_1T_2$. We show that an And\^o dilation of $(T_1,T_2)$ can also be constructed on a space of the form $\mathcal K_{D}:=H^2(\mathcal{F}_*)\oplus \mathcal{R}$, where $\mathcal{F}_*$ is a Hilbert space containing an isometric image of $\mathcal{D}_{T^*}$ and like in the first construction
\begin{enumerate}
\item[(i)]$\mathcal{F}_*=\mathcal D_{T_1^*}\oplus\mathcal D_{T_2^*}$, if $\text{dim}(\mathcal D_{T_1^*}\oplus\mathcal D_{T_2^*})<\infty$ and
\item[(ii)]$\mathcal{F}_*=\mathcal D_{T_1^*}\oplus\big(\mathcal D_{T_2^*}\oplus l^2\big)$, (possibly) if $\text{dim}(\mathcal D_{T_1^*}\oplus\mathcal D_{T_2^*})=\infty$.
\end{enumerate}We find two commuting unitaries $W_1,W_2$ acting on $\mathcal{R}$ such that $\overline{RanQ}$ is co-invariant under $W_1$ and $W_2$ and that $W_1W_2=W$. We find an isometry $\Gamma:\mathcal{D}_{T^*}\to \mathcal{F}_*$ such that the isometry $\tilde\Pi:\mathcal{H}\to H^2(\mathcal{F}_*)\oplus \mathcal{R}$ defined by
$$
 \tilde\Pi(h):=\big((I_{H^2}\otimes \Gamma)\oplus I_{\mathcal{R}}\big)\Pi_{D}(h)=\sum_{n=0}^{\infty}z^n \Gamma D_{T^*}T^{* n}h\oplus Qh
$$has the following intertwining property:
$$
 \tilde\Pi(T_1,T_2,T_1T_2)^*=\big((M_{U^{' *}P'^\perp+zU^{' *}P'}\oplus W_1),(M_{P'U'+zP'^\perp U'}\oplus W_2), (M_z\oplus W)\big)^* \tilde\Pi,
$$where $P'$ and $U'$ are a projection and a unitary in $\mathcal{B}(\mathcal{F}_*)$. Consequently, the following pair of block operator matrices on $H^2(\mathcal{F}_*)\oplus \mathcal{R}$ is an And\^o dilation for the pair $(T_1,T_2)$:
$$
(V^D_1,V^D_2):=\left(\left(
     \begin{array}{cc}
      M_{U^{' *}P'^\perp+zU^{' *}P'} & 0 \\
      0 & W_1 \\
     \end{array}
   \right),\left(
     \begin{array}{cc}
      M_{P'U'+zP'^\perp U'} & 0 \\
      0 & W_2 \\
     \end{array}
   \right)\right).
$$
Note that the dilation $(V^D_1,V^D_2)$ is such that with the isometry $\Pi_\Gamma: (H^2\otimes \mathcal{D}_{T^*})\oplus\mathcal{R}\to (H^2\otimes\mathcal{F}_*)\oplus \mathcal{R}$ defined by
\begin{eqnarray}\label{Pi-Gamma}
\Pi_\Gamma:=(I_{H^2}\otimes \Gamma)\oplus I_{\mathcal{R}}
\end{eqnarray}the following holds:
\begin{eqnarray}\label{NF-min}
  \Pi_\Gamma^*V^D_1V^D_2\Pi_\Gamma &=& M_z\oplus W =V_{D}.
\end{eqnarray}
The following theorem, the second main result of the paper, summarizes the second construction.
\begin{theorem}[Douglas model]\label{the2ndmainresult}
Let $(T_1,T_2,T)$ be a triple of contraction operators on a Hilbert space $\mathcal H$. Then the following are equivalent:
\begin{enumerate}
\item[({\bf P})] $(T_1,T_2)$ is commuting and $T$ is the product of $T_1$ and $T_2$;
\item[({\bf A$'$})] There exist Hilbert spaces $\mathcal{F}_*$, $\mathcal{R}$, commuting unitaries $W_1,W_2$ in $\mathcal B(\mathcal R)$, a pair $(V^D_1,V^D_2)$ of commuting isometries on $H^2(\mathcal{F}_*)\oplus \mathcal{R}$ such that
    $$(V^D_1,V^D_2)=\left((M_{U^{' *}P'^\perp+zU^{' *}P'}\oplus W_1),(M_{P'U'+zP'^\perp U'}\oplus W_2)\right)$$ for some projection $P'$ and unitary $U'$ in $\mathcal{B}(\mathcal{F}_*)$, and a joint $(V^D_1,V^D_2)$-co-invariant subspace $\mathcal{M}\subseteq H^2(\mathcal{F}_*)\oplus\mathcal{R}$ such that
$(T_1,T_2,T)$ is unitarily equivalent to
$$
P_{\mathcal{M}}(V^D_1,V^D_2,V^D_1V^D_2)|_{\mathcal{M}}.
$$
Moreover, the space $\mathcal{M}$ can be chosen to be the range of an isometry $\tilde \Pi:\mathcal{H}\to H^2(\mathcal{F}_*)\oplus \mathcal{R}$ such that
  $$
  (V^D_1,V^D_2,V^D_1V^D_2)^*\tilde \Pi=\tilde \Pi(T_1,T_2,T_1T_2)^*;
  $$
\item[({\bf D})]There exist a pair of contractions $(D_1,D_2)$ acting on $H^2(\mathcal D_{T^*})\oplus \mathcal{R}$ and a joint $(D_1,D_2,V_D)$-co-invariant subspace $\mathcal{M'}\subseteq H^2(\mathcal D_{T^*})\oplus\mathcal{R}$ such that
\begin{eqnarray*}
(T_1,T_2, T)\text{ is unitarily equivalent to }P_{\mathcal{M'}}(D_1,D_2,V_D)|_{\mathcal M'} \text{ and }\\
P_{\mathcal{M'}}V_D|_{\mathcal{M'}}=P_{\mathcal{M'}}D_1D_2|_{\mathcal{M'}}=P_{\mathcal{M'}}D_2D_1|_{\mathcal{M'}}.
\end{eqnarray*}

Moreover, $(D_1,D_2)$ can be chosen to be $(M_\Phi\oplus W_1,M_{\Psi}\oplus W_2)$, where $$\Phi(z)=G_1^*+zG_2 \text{ and } \Psi(z)=G_2^*+zG_1,$$ where $(G_1,G_2)=(\Gamma^*P'^\perp U'\Gamma,\Gamma^*U'^*P'\Gamma)$, $\Gamma:\mathcal D_{T^*}\to \mathcal{F}_*$ is an isometry, $W_1$, $W_2$, $P'$ and $U'$ are as in part (\bf{A$'$}).
\end{enumerate}
\end{theorem}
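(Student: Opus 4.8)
The plan is to establish the cycle $(\mathbf P)\Rightarrow(\mathbf{A'})\Rightarrow(\mathbf D)\Rightarrow(\mathbf P)$. The entire analytic content sits in the first implication, which is the Douglas-type construction of the And\^o dilation; the passage $(\mathbf{A'})\Rightarrow(\mathbf D)$ is a transcription onto the Douglas dilation space of $T$ via the isometry $\Pi_\Gamma$ of (\ref{Pi-Gamma}), and $(\mathbf D)\Rightarrow(\mathbf P)$ is a routine compression argument.

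For $(\mathbf P)\Rightarrow(\mathbf{A'})$ I would start from the Douglas dilation $V_D=M_z\oplus W$ of $T=T_1T_2$ from (\ref{The-NF}) and build a factorization $V_D=V_1^DV_2^D$ into commuting isometries, treating the shift summand $M_z$ on $H^2(\mathcal D_{T^*})$ and the residual summand $W$ on $\mathcal R$ separately. On the residual part the key observation is that, because $T_1$ and $T_2$ commute, each is simultaneously a left and a right factor of $T$: the right-factor estimate gives $T_iQ^2T_i^*\le Q^2$ while the left-factor estimate gives $T_iQ^2T_i^*\ge Q^2$, so $T_iQ^2T_i^*=Q^2$ for $i=1,2$. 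Consequently $X_i^*Q:=QT_i^*$ defines a genuine isometry on $\overline{\operatorname{Ran}Q}$, the two isometries commute and satisfy $X_2^*X_1^*=X^*$ (the operator of (\ref{TheXIntro})), and I would take $W_1^*,W_2^*$ to be a commuting unitary extension of the commuting isometries $X_1^*,X_2^*$ on a space $\mathcal R\supseteq\overline{\operatorname{Ran}Q}$; such an extension exists classically, leaves $\overline{\operatorname{Ran}Q}$ invariant (so $\overline{\operatorname{Ran}Q}$ is co-invariant under each $W_i$), and satisfies $W_2^*W_1^*=X^*$, i.e. $W_1W_2=W$ extends $X$, with $W_i^*Q=QT_i^*$ by construction.

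On the shift summand I would first record the defect identity
\[
D_{T^*}^2=I-T_1T_2T_2^*T_1^*=(I-T_1T_1^*)+T_1(I-T_2T_2^*)T_1^*=D_{T_1^*}^2+T_1D_{T_2^*}^2T_1^*,
\]
which shows that $D_{T^*}h\mapsto D_{T_1^*}h\oplus D_{T_2^*}T_1^*h$ extends to an isometry $\Gamma:\mathcal D_{T^*}\to\mathcal F_*=\mathcal D_{T_1^*}\oplus\mathcal D_{T_2^*}$ (enlarged by an $\ell^2$ summand in the infinite-dimensional case to match the stated dimension count). The heart of the argument is then an And\^o-type lemma: I must produce a single projection $P'$ and unitary $U'$ on $\mathcal F_*$ so that, with $\varphi=U'^*P'^\perp+zU'^*P'$ and $\psi=P'U'+zP'^\perp U'$, the multiplier relations $M_\varphi^*(I\otimes\Gamma)\mathcal O=(I\otimes\Gamma)\mathcal O T_1^*$ and $M_\psi^*(I\otimes\Gamma)\mathcal O=(I\otimes\Gamma)\mathcal O T_2^*$ hold simultaneously, where $\mathcal O$ is the observability operator of (\ref{the-iso}); unwinding these coefficient by coefficient reduces, at the zeroth level, to $P'^\perp U'\Gamma D_{T^*}+P'U'\Gamma D_{T^*}T^*=\Gamma D_{T^*}T_1^*$ together with its companion for $T_2$, and this is exactly the place where the commuting structure of $(T_1,T_2)$ must be encoded into the single pair $(P',U')$. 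This simultaneous solvability, and the verification that $\varphi,\psi$ are then inner with $\varphi\psi=\psi\varphi=zI$ so that $(V_1^D,V_2^D)=(M_\varphi\oplus W_1,M_\psi\oplus W_2)$ are commuting isometries with product $V_D$, is the main obstacle; once it is in place, combining with the residual part yields the intertwining $\tilde\Pi(T_1,T_2,T)^*=(V_1^D,V_2^D,V_D)^*\tilde\Pi$, hence $(\mathbf{A'})$ with $\mathcal M=\operatorname{Ran}\tilde\Pi$.

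Finally, for $(\mathbf{A'})\Rightarrow(\mathbf D)$ I would set $D_i:=\Pi_\Gamma^*V_i^D\Pi_\Gamma$; since $\Pi_\Gamma=(I_{H^2}\otimes\Gamma)\oplus I_{\mathcal R}$ compresses the multiplier symbols by $\Gamma^*(\cdot)\Gamma$, this gives $D_1=M_\Phi\oplus W_1$ and $D_2=M_\Psi\oplus W_2$ with $\Phi(z)=\Gamma^*\varphi(z)\Gamma=G_1^*+zG_2$ and $\Psi(z)=\Gamma^*\psi(z)\Gamma=G_2^*+zG_1$, where $(G_1,G_2)=(\Gamma^*P'^\perp U'\Gamma,\Gamma^*U'^*P'\Gamma)$, matching the stated forms. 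Taking $\mathcal M'=\operatorname{Ran}\Pi_D$ and using $\tilde\Pi=\Pi_\Gamma\Pi_D$ with $\Pi_\Gamma^*\Pi_\Gamma=I$, the intertwining from $(\mathbf{A'})$ gives $D_i^*\Pi_D=\Pi_D T_i^*$, so $\mathcal M'$ is jointly co-invariant and $P_{\mathcal M'}D_i|_{\mathcal M'}$ is unitarily equivalent to $T_i$; co-invariance then makes the compression multiplicative, so $P_{\mathcal M'}D_1D_2|_{\mathcal M'}$ identifies with $T_1T_2=T$, which also equals $P_{\mathcal M'}V_D|_{\mathcal M'}$ by the Douglas intertwining (\ref{The-NF}), giving $(\mathbf D)$. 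The closing implication $(\mathbf D)\Rightarrow(\mathbf P)$ is immediate: compressing the commuting pair $(D_1,D_2)$ to the joint co-invariant subspace $\mathcal M'$ produces commuting contractions whose product, by the displayed identity together with co-invariance, equals the compression of $V_D$, forcing $T=T_1T_2=T_2T_1$.
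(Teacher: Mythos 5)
Your architecture coincides with the paper's: the Douglas dilation $V_D=M_z\oplus W$ of $T$, commuting unitaries on the residual space, a Berger--Coburn--Lebow-type pair of multipliers on the shift summand, and compression by $\Pi_\Gamma$ to pass from ({\bf A$'$}) to ({\bf D}). Your sandwich argument $T_iQ^2T_i^*\le Q^2\le T_iQ^2T_i^*$ is in fact cleaner than the paper's route (which first gets contractions $X_i^*$ from the Douglas lemma and then argues that commuting contractions whose product is an isometry are themselves isometries). But the proposal has a genuine gap at precisely the point you label ``the main obstacle'': you never produce a projection $P'$ and unitary $U'$ satisfying the zeroth-level relations $P'^\perp U'\Gamma D_{T^*}+P'U'\Gamma D_{T^*}T^*=\Gamma D_{T^*}T_1^*$ and its companion, and this is the entire analytic content of the implication ({\bf P})$\Rightarrow$({\bf A$'$}). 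The paper supplies it via the And\^o tuple for $(T_1^*,T_2^*)$: the identity $D_{T_2^*}^2+T_2D_{T_1^*}^2T_2^*=D_{T^*}^2=D_{T_1^*}^2+T_1D_{T_2^*}^2T_1^*$ shows that $D_{T_1^*}T_2^*h\oplus D_{T_2^*}h\mapsto D_{T_1^*}h\oplus D_{T_2^*}T_1^*h$ is a well-defined isometry on $\operatorname{Ran}\Gamma$, hence extends to a unitary $U'$ on $\mathcal{F}_*$ (adjoining $l^2$ if necessary), and $P'$ is the coordinate projection onto $\mathcal{D}_{T_1^*}$. Here one must take $\Gamma D_{T^*}h=D_{T_1^*}T_2^*h\oplus D_{T_2^*}h$ as in (\ref{the-iso-pure}); your $\Gamma$ (built from $D_{T^*}^2=D_{T_1^*}^2+T_1D_{T_2^*}^2T_1^*$) is $U'$ composed with the paper's, and with that choice the symbols do not come out in the stated form. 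With these choices the two intertwining relations are the two-line computation of Lemma~\ref{tet-fund-rels}, and innerness together with $\varphi\psi=\psi\varphi=zI_{\mathcal{F}_*}$ follows from the algebraic identities $E_1E_2=E_2E_1=0$ and $E_1E_1^*+E_2^*E_2=E_1^*E_1+E_2E_2^*=I$ for $E_1=P'^\perp U'$, $E_2=U'^*P'$ (Lemma~\ref{relations-of-E-lem}).

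A second, smaller gap is on the residual summand. You need the commuting unitaries $W_1,W_2$ to act on the Douglas space $\mathcal{R}$ itself, i.e.\ on the minimal unitary extension space of $X^*$, with $W_1W_2=W$; otherwise the ``Moreover'' clause of ({\bf D}) and the identity $\Pi_\Gamma^*V_1^DV_2^D\Pi_\Gamma=V_D$ are lost. The classical extension theorem for commuting isometries only provides commuting unitaries on some larger space, and it is not automatic that their product restricts to the \emph{minimal} unitary extension of $X$; the paper isolates exactly this as Lemma~\ref{special-ext} and proves it by Wold-decomposing $V=V_1V_2$ and invoking the Berger--Coburn--Lebow structure on the shift part. (Your route can be patched by restricting an arbitrary commuting unitary extension to the smallest $W$-reducing subspace containing $\overline{\operatorname{Ran}Q}$, after checking that this subspace reduces both $W_1$ and $W_2$, but this must be said.) Finally, in ({\bf D})$\Rightarrow$({\bf P}) the pair $(D_1,D_2)$ need not commute, so the closing argument should run through the displayed identity $P_{\mathcal{M'}}D_1D_2|_{\mathcal{M'}}=P_{\mathcal{M'}}D_2D_1|_{\mathcal{M'}}=P_{\mathcal{M'}}V_D|_{\mathcal{M'}}$ and joint co-invariance (which makes the compression multiplicative), not through commutativity of $(D_1,D_2)$.
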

Note that ({\bf P$'$})$\Rightarrow$({\bf A$'$}) of the above theorem gives an And\^o dilation for the pair $(T_1,T_2)$ while ({\bf A$'$})$\Rightarrow$({\bf D}) describes the avatar of the dilation pair on the Douglas dilation space.

Equations (\ref{minimality}) and (\ref{NF-min}) imply that the two And\^o dilations constructed above are such that when we compress their products $V_1V_2$ and $V^D_1V^D_2$ to the Sch\"affer's space $\mathcal{H}\oplus H^2(\mathcal{D}_T)$ and the Douglas' space $H^2(\mathcal{D}_{T^*})\oplus\mathcal{R}$, respectively, we get the minimal isometric dilations $V_S$ and $V_D$ of the product $T_1T_2$ of the contractions constructed by Sch\"affer and Douglas, respectively. We use the fact that any two minimal isometric dilations of a given contraction, in particular $V_S$ and $V_D$ are unitarily equivalent, to obtain the following result. In fact, in \S \ref{uniquenessSec} we prove a more general result (see Theorem \ref{uniqueness} below) from which the following theorem will follow.
\begin{theorem}\label{compression-uniquenss}
For a pair $(T_1,T_2)$ of commuting contractions on a Hilbert space $\mathcal{H}$, let $(V^S_1,V^S_2)$ on $\mathcal{K}_S=\mathcal{H}\oplus H^2(\mathcal F)$ and $(V^D_1,V^D_2)$ on $\mathcal{K}_D=H^2(\mathcal{F}_*)\oplus \mathcal{R}$ be the And\^o dilations constructed in Theorem \ref{themainresult} and Theorem \ref{the2ndmainresult}, respectively. Let $\Pi_{\Lambda}$ and $\Pi_\Gamma$ be the isometries as defined in (\ref{Pi-Lambda}) and (\ref{Pi-Gamma}), respectively. Then
$$
\Pi_{\Lambda}^*(V^S_1,V^S_2,V^S_1V^S_2)\Pi_{\Lambda}\text{ is unitarily equivalent to }\Pi_\Gamma^*(V^D_1,V^D_2,V^D_1V^D_2)\Pi_\Gamma.
$$
\end{theorem}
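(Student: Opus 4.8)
The plan is to reduce the claimed unitary equivalence of triples to the classical uniqueness of the minimal isometric dilation of the single contraction $T=T_1T_2$, and then to upgrade the resulting unitary — which a priori only intertwines the products — so that it intertwines the two individual factors as well. First I would make the two compressed triples explicit. By \eqref{minimality} and \eqref{NF-min} the third entries are exactly the Sch\"affer and Douglas minimal isometric dilations $V_S$ and $V_D$ of $T$. Compressing the off-diagonal and multiplier entries of $(V_1^S,V_2^S)$ by $\Pi_\Lambda$ of \eqref{Pi-Lambda}, and of $(V_1^D,V_2^D)$ by $\Pi_\Gamma$ of \eqref{Pi-Gamma}, the passages $\Lambda^{*}(\,\cdot\,)\Lambda$ and $\Gamma^{*}(\,\cdot\,)\Gamma$ turn the inner multipliers into the degree-one multipliers $M_{F_1+zF_2^{*}},M_{F_2+zF_1^{*}}$ and $M_{G_1^{*}+zG_2},M_{G_2^{*}+zG_1}$, so that one recovers precisely the pairs $(S_1,S_2)$ of Theorem~\ref{themainresult}(\textbf{S}) and $(D_1,D_2)$ of Theorem~\ref{the2ndmainresult}(\textbf{D}). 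Thus $\Pi_\Lambda^{*}(V_1^S,V_2^S,V_1^SV_2^S)\Pi_\Lambda=(S_1,S_2,V_S)$ and $\Pi_\Gamma^{*}(V_1^D,V_2^D,V_1^DV_2^D)\Pi_\Gamma=(D_1,D_2,V_D)$, and the key point is that every ingredient of these triples is manufactured out of $T$, its minimal isometric dilation, and the fundamental operators $(F_1,F_2)$ of $(T_1,T_2)$ and $(G_1,G_2)$ of the adjoint pair $(T_1^{*},T_2^{*})$.

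Next I would produce the candidate intertwiner. Since $V_S$ on $\mathcal H\oplus H^2(\mathcal D_T)$ and $V_D$ on $H^2(\mathcal D_{T^{*}})\oplus\mathcal R$ are both minimal isometric dilations of $T$, with isometric embeddings $\iota_S\colon h\mapsto h\oplus0$ and $\iota_D=\Pi_D$ of $\mathcal H$, the classical uniqueness theorem \cite{Nagy-Foias} furnishes a unique unitary $\omega\colon\mathcal H\oplus H^2(\mathcal D_T)\to H^2(\mathcal D_{T^{*}})\oplus\mathcal R$ with $\omega\iota_S=\iota_D$ and $\omega V_S=V_D\omega$; concretely $\omega$ is the standard identification of the two models through the characteristic function $\Theta_T$, and being an intertwiner of isometries it carries the Wold decomposition of $V_S$ onto that of $V_D$, matching the pure shift parts and sending the residual unitary part of $V_S$ onto $\mathcal R$. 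Because $\omega$ already intertwines the third entries, the theorem reduces to the single claim
\[
\omega S_i=D_i\,\omega\qquad(i=1,2).
\]

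The crux, and the main obstacle, is this last claim: $\omega$ is built purely from the product $T$ and so carries, a priori, no information about the factorization $T=T_1T_2$. To close the gap I would establish a rigidity statement (the general result alluded to before Theorem~\ref{compression-uniquenss}): on a fixed minimal isometric dilation space of $T$ there is at most one commuting contractive co-extension of $(T_1,T_2)$ whose product agrees with the dilation on $\mathcal H$ and which respects the Wold decomposition, acting as degree-one analytic multipliers on the shift part and as commuting unitaries on the residual unitary part. Granting this, I would transport $(S_1,S_2)$ to the Douglas space: using $\omega\iota_S=\iota_D$ and $\omega V_S=V_D\omega$ one checks that $(\omega S_1\omega^{*},\omega S_2\omega^{*})$ is again a commuting contractive co-extension of $(T_1,T_2)$, with $(\omega S_i\omega^{*})^{*}\iota_D=\iota_D T_i^{*}$ and product agreeing with $V_D$ on $\iota_D\mathcal H$, so by the rigidity statement it must coincide with the model pair $(D_1,D_2)$, giving $\omega S_i\omega^{*}=D_i$. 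The technical heart is verifying the rigidity, which forces the multiplier coefficients to solve the fundamental equations and the unitaries to be the asymptotic unitaries; its delicate part is matching both Wold pieces simultaneously — tracking the fundamental operators on $\mathcal D_T$ against those on $\mathcal D_{T^{*}}$ through the $\Theta_T$-identification $\omega$, and matching the commuting unitaries on the unitary part — which I would carry out by evaluating $\omega S_i$ and $D_i\omega$ on the spanning family $\{V_S^{\,n}\iota_S h:n\ge0,\ h\in\mathcal H\}$ and invoking uniqueness of the fundamental operators together with minimality.
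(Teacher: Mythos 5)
Your overall strategy is the paper's: identify the compressed triples as $(S_1,S_2,V_S)$ and $(D_1,D_2,V_D)$, invoke the classical uniqueness of the minimal isometric dilation of $T=T_1T_2$ to obtain a unitary $\omega$ intertwining the third entries, and then prove a rigidity statement forcing the first two entries, so that $\omega$ automatically intertwines them as well. The paper packages the rigidity as Theorem~\ref{uniqueness}: the class $\mathcal{U}_{\underline{T}}$ of triples $(W_1,W_2,W)$ with $W$ a minimal isometric dilation of $T$, each $(W_i,W)$ commuting, $W_1=W_2^*W$, and $(W_1,W_2,W)$ a co-extension of $(T_1,T_2,T)$, is a singleton up to unitary equivalence; its proof transports everything to the Sch\"affer space, reads off the block entries, and concludes via the uniqueness of the fundamental operators (Theorem~\ref{fundeqnsThm}).

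However, your formulation of the rigidity lemma has a genuine gap, and it sits exactly at the crux you identify. First, you require the pair to be a \emph{commuting} co-extension, but the paper notes explicitly that $(S_1,S_2)$ (and likewise $(D_1,D_2)$) need not commute, so your lemma would not apply to the very objects being compared. Second, and more seriously, the hypothesis that ``the product agrees with the dilation on $\mathcal H$'' is too weak to force uniqueness. Take $T_1=T_2=0$ on $\mathcal H=\mathbb C$, so $T=0$, $\mathcal R=0$, and the Douglas space is $H^2(\mathbb C)$ with $\Pi_D h$ the constant function $h$: every pair $(M_{zb},M_{ze})$ with $|b|,|e|\le 1$ is a contractive co-extension of $(0,0)$ by degree-one multipliers whose product, compressed back to $\Pi_D\mathcal H$, equals $T=0$; the linear coefficients $b,e$ are completely undetermined, so no such rigidity holds. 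What actually pins the pair down is the operator identity $W_1=W_2^*W$ on the \emph{whole} dilation space: it forces $\varphi_1(z)=F_1+zF_2^*$ and $\varphi_2(z)=F_2+zF_1^*$ with the \emph{same} adjoint-linked coefficients in both multipliers, and then the co-extension property yields the fundamental equations $T_1-T_2^*T=D_TF_1D_T$ and $T_2-T_1^*T=D_TF_2D_T$, whose solutions are unique. You should verify that both compressed triples satisfy $W_1=W_2^*W$ and $W_2=W_1^*W$ (they do) and run the rigidity argument from those identities rather than from commutativity and product-agreement on $\mathcal H$; doing the block computation in a single model also removes the need to match the two Wold decompositions by hand.
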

Although a triple of commuting contractions does not dilate, in general \cite{Parrott}, we observe that the triple $(T_1,T_2,T_1T_2)$ of commuting contractions always dilates to the triple $(V_1,V_2,V_1V_2)$ of commuting isometries, where $(V_1,V_2)$ is an And\^o dilation of $(T_1,T_2)$ and that a simple application of And\^o's theorem yields: \textit{$T=T_1T_2$ with $T_1,T_2$ commuting contractions if and only if there exists a commuting pair $(V_1,V_2)$ of isometries such that $(V_1,V_2,V_1V_2)$ is a co-extension of $(T_1,T_2,T)$.}

\subsection{Functional model}An important tool in dilation theory and functional model theory is the {\em{characteristic function}}, which for a contraction $T$, is defined by
\begin{eqnarray}\label{char}
\Theta_T(z):=[-T+zD_{T^*}(I_\mathcal{H}-zT^*)^{-1}D_T]|_{\mathcal{D}_T}, \text{ for all $z \in \mathbb{D}$}.
\end{eqnarray}This, at first glance intimidating, expression of the characteristic function actually is an obvious generalization of the M$\ddot{\text o}$bius transformations preserving $\mathbb D$, when one considers the scalar contractions. By virtue of the relation $TD_T=D_{T^*}T$ (see Chapter I of \cite{Nagy-Foias}), it follows that for each $z$ in $\mathbb{D}$, $\Theta_T(z)$ is in $\mathcal B(\mathcal D_T,\mathcal D_{T^*})$. At this point, we define a couple of terminologies concerning the characteristic function, the first one is due to Sz.-Nagy and Foias.
\begin{definition}
Let $T$ and $T'$ be contractions acting on Hilbert spaces $\mathcal{H}$ and $\mathcal{H'}$ respectively.
\begin{enumerate}
\item[(I)]The characteristic functions of $T$ and $T'$ are said to {\it{coincide}} if there are unitary operators $u: \mathcal{D}_T \to \mathcal{D}_{T'}$ and $u_{*}: \mathcal{D}_{T^*} \to \mathcal{D}_{{T'}^*}$ such that the following diagram commutes for all $z \in \mathbb{D}$,
$$
\begin{CD}
\mathcal{D}_T @>\Theta_T(z)>> \mathcal{D}_{T^*}\\
@Vu VV @VVu_{*} V\\
\mathcal{D}_{T'} @>>\Theta_{{T'}}(z)> \mathcal{D}_{{T'}^*}
\end{CD}.
$$
\item[(II)]Let $\mathcal G=\{G_i\in\mathcal B(\mathcal D_{T^*}):1\leq i \leq n\}$ and $\mathcal G'=\{G_i'\in \mathcal B(\mathcal D_{T^{' *}}):1\leq i \leq n\}$. We say that the pairs $(\mathcal G, \Theta_T)$ and $(\mathcal G', \Theta_{T'})$ coincide if $\Theta_T$ and $\Theta_{T'}$ coincide and the unitary $u_*:\mathcal D_{T^*}\to \mathcal D_{T^{' *}}$ involved in the coincidence of $\Theta_T$ and $\Theta_{T'}$ has the following intertwining property:
$$
u_*G_i=G_i'u_* \text{ for each }1\leq i \leq n.
$$
\end{enumerate}
\end{definition}
A contraction on a Hilbert space is called {\em{completely-non-unitary}} (c.n.u.) if it has no reducing subspace on which it is unitary. It is well-known (Chapter VI, \cite{Nagy-Foias}) that two c.n.u. contractions are unitarily equivalent if and only if their characteristic functions coincide. Associated to every pair $(T_1,T_2)$ of commuting contractions, we show in \S \ref{Schaffer} that there is a Hilbert space $\mathcal{F}$, an isometry $\Lambda:\mathcal{D}_{T_1T_2}\to \mathcal{F}$, a projection $P$ and a unitary $U$ in $\mathcal{B}(\mathcal{F})$. This is known from the time of And\^o. We call the tuple $(\mathcal{F},\Lambda,P,U)$ the {\em{And\^o tuple}} for $(T_1,T_2)$, see Definition \ref{Ando tuple} below.
\begin{definition}
Let $(T_1,T_2)$ be a pair of commuting contractions and $T=T_1T_2$. Let $(\mathcal{F}_*,\Gamma,P',U')$ be the And\^o tuple for $(T_1^*,T_2^*)$. Define contractions $G_1$, $G_2$ on $\mathcal{D}_{T^*}$ by
$$(G_1,G_2):=(\Gamma^*P'^\perp U'\Gamma,\Gamma^*U'^*P'\Gamma).$$Then the triple $(G_1,G_2,\Theta_T)$ is called the characteristic triple for $(T_1,T_2)$.
\end{definition}
The following theorem, another main result of this paper, is the motivation behind the above definition.
\begin{theorem}\label{uniqueness-pure}
Let $(T_1,T_2)$ and $(T_1',T_2')$ be two pairs of commuting contractions such that their products $T=T_1T_2$ and $T'=T_1'T_2'$ are pure contractions. Then $(T_1,T_2)$ and $(T_1',T_2')$ are unitarily equivalent if and only if their characteristic triples coincide.
\end{theorem}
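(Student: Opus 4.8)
The plan is to reduce everything to the Douglas functional model of Theorem \ref{the2ndmainresult}, specialized to the pure case. First I would record the simplifications forced by purity of $T=T_1T_2$: since $T^{*n}\to 0$ strongly, the operator $Q$ of the Douglas construction is $0$, so $\mathcal{R}=\{0\}$ and the auxiliary unitaries $W_1,W_2$ act on the zero space; moreover the observability operator $\mathcal{O}$ of (\ref{the-iso}) is an isometry whose range is the Sz.-Nagy--Foias model space $\mathcal{H}_T:=H^2(\mathcal{D}_{T^*})\ominus\Theta_TH^2(\mathcal{D}_T)$. Consequently part ({\bf D}) of Theorem \ref{the2ndmainresult} says precisely that $(T_1,T_2,T)$ is unitarily equivalent to the compression $P_{\mathcal{H}_T}(M_\Phi,M_\Psi,M_z)|_{\mathcal{H}_T}$, where $\mathcal{H}_T$ is jointly co-invariant and
\[
\Phi(z)=G_1^*+zG_2,\qquad \Psi(z)=G_2^*+zG_1 ,
\]
with $(G_1,G_2)$ the pair of the characteristic triple of $(T_1,T_2)$. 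Thus the characteristic triple $(G_1,G_2,\Theta_T)$ is exactly the data entering this model, and the theorem becomes the assertion that this data is a complete unitary invariant. The same description, with primes throughout, holds for $(T_1',T_2')$.

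For the ``if'' direction (coincidence $\Rightarrow$ unitary equivalence), which is the conceptual heart, I would start from the coincidence data: unitaries $u:\mathcal{D}_T\to\mathcal{D}_{T'}$ and $u_*:\mathcal{D}_{T^*}\to\mathcal{D}_{T'^*}$ with $u_*\Theta_T(z)=\Theta_{T'}(z)u$ for all $z$ and $u_*G_i=G_i'u_*$ for $i=1,2$. The candidate intertwiner is $I_{H^2}\otimes u_*:H^2(\mathcal{D}_{T^*})\to H^2(\mathcal{D}_{T'^*})$. Coincidence of the characteristic functions gives $(I\otimes u_*)M_{\Theta_T}=M_{\Theta_{T'}}(I\otimes u)$, so $I\otimes u_*$ carries $\Theta_TH^2(\mathcal{D}_T)$ onto $\Theta_{T'}H^2(\mathcal{D}_{T'})$ and hence maps $\mathcal{H}_T$ unitarily onto $\mathcal{H}_{T'}$. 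Next, the relations $u_*G_i=G_i'u_*$, together with their adjoint forms $u_*G_i^*=G_i'^*u_*$ (which follow because $u_*$ is unitary), give $u_*\Phi(z)=\Phi'(z)u_*$ and $u_*\Psi(z)=\Psi'(z)u_*$, i.e.\ $I\otimes u_*$ intertwines $(M_\Phi,M_\Psi,M_z)$ with $(M_{\Phi'},M_{\Psi'},M_z)$. Being unitary and mapping the jointly co-invariant $\mathcal{H}_T$ onto $\mathcal{H}_{T'}$, it intertwines the compressions; restricting it to $\mathcal{H}_T$ and composing with the model identifications yields a unitary carrying $(T_1,T_2)$ to $(T_1',T_2')$.

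For the ``only if'' direction, let $\tau:\mathcal{H}\to\mathcal{H}'$ be unitary with $\tau T_i=T_i'\tau$, so also $\tau T=T'\tau$. From $D_{T'}^2=\tau D_T^2\tau^*$ and $D_{T'^*}^2=\tau D_{T^*}^2\tau^*$ one gets that $\tau$ restricts to unitaries $u:=\tau|_{\mathcal{D}_T}$ and $u_*:=\tau|_{\mathcal{D}_{T^*}}$, and conjugating the defining expression (\ref{char}) of $\Theta_T$ by $\tau$ shows the characteristic functions coincide through $u,u_*$. It remains to arrange $u_*G_i=G_i'u_*$. Here I would transport the And\^o tuple: since $\tau T_i^*=T_i'^*\tau$, the pair $(T_1^*,T_2^*)$ is unitarily equivalent to $(T_1'^*,T_2'^*)$ and $\tau$ maps $\mathcal{D}_{T_i^*}$ onto $\mathcal{D}_{T_i'^*}$, hence induces a unitary $\hat\tau:\mathcal{F}_*\to\mathcal{F}_*'$. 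Because the And\^o tuple is built from the defect geometry and the And\^o factorization identities, all preserved under conjugation by $\tau$, the data $(\hat\tau\Gamma u_*^*,\hat\tau P'\hat\tau^*,\hat\tau U'\hat\tau^*)$ is an And\^o tuple for $(T_1'^*,T_2'^*)$, and a direct computation gives $G_i'=u_*G_iu_*^*$ for this transported tuple, equivalently $u_*G_i=G_i'u_*$.

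The main obstacle I anticipate is precisely this last step of the ``only if'' direction. The And\^o tuple is not canonical --- the projection $P'$ and unitary $U'$ arise from solving the And\^o factorization, which in general admits many solutions --- so one must verify that the characteristic triple is well defined up to the coincidence relation, and that a unitary equivalence of the pairs can be promoted to a unitary equivalence of (some choice of) their And\^o tuples. I would isolate this as a naturality lemma for the And\^o-tuple construction of \S\ref{Schaffer}; everything else is a formal consequence of the functional-model description recorded in the first paragraph.
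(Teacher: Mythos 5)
Your overall architecture is the same as the paper's: for the ``if'' direction you use exactly the paper's argument (the unitary $I_{H^2}\otimes u_*$ maps $\Theta_TH^2(\mathcal{D}_T)$ onto $\Theta_{T'}H^2(\mathcal{D}_{T'})$, intertwines $(M_\Phi,M_\Psi,M_z)$ with the primed model, and Theorem \ref{DSS-compression} finishes), and for the ``only if'' direction you restrict the intertwining unitary to the defect spaces and conjugate \eqref{char}, just as the paper does. The one place you diverge is the step you yourself flag as the main obstacle: establishing $u_*G_i=G_i'u_*$. Your plan --- transport the And\^o tuple by $\hat\tau$ and then worry about whether the characteristic triple is well defined under the many choices in the And\^o construction --- would work, but the ``naturality lemma'' you ask for is already in the paper in a cleaner form: Theorem \ref{fundeqnsThm} (applied to $(T_1^*,T_2^*)$ via Remark \ref{fundeqnsThm-Rmk}) shows that $G_1,G_2$ are the \emph{unique} operators on $\mathcal{D}_{T^*}$ satisfying $T_1^*-T_2T^*=D_{T^*}G_1D_{T^*}$ and $T_2^*-T_1T^*=D_{T^*}G_2D_{T^*}$, independently of any choice of And\^o tuple. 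With that characterization, conjugating these two operator identities by the unitary $\tau$ gives $D_{T^*}u_*^*G_i'u_*D_{T^*}=D_{T^*}G_iD_{T^*}$, and uniqueness on the defect space yields $u_*^*G_i'u_*=G_i$ in one line --- no transported tuple and no separate well-definedness argument needed. So your proof is essentially correct modulo this lemma; you should simply cite Theorem \ref{fundeqnsThm} rather than re-deriving naturality of the And\^o tuple, which is both harder and unnecessary.
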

We now consider the converse direction. We consider two Hilbert spaces $\mathcal{D}$ and $\mathcal{D}_*$ and an {\em{inner}} function $(\mathcal{D},\mathcal{D}_*,\Theta)$ -- this means that for each $z\in\mathbb D$, $\Theta(z)$ is in $\mathcal{B}(\mathcal{D},\mathcal{D}_*)$ and for almost every $z\in \mathbb{T}$, $\Theta(z)$ is an isometry, see Chapter V of \cite{Nagy-Foias} for more details on inner functions. We answer the following question: {\em{For a given inner function $(\mathcal{D},\mathcal{D}_*,\Theta)$ and a pair of contractions $(G_1,G_2)$ on $\mathcal{D}_*$, when does there exist a pair of commuting contractions with $(G_1,G_2,\Theta)$ as its characteristic triple?}}
\begin{definition}
Let $(\mathcal{D},\mathcal{D}_*,\Theta)$ be an inner function and $(G_1,G_2)$ on $\mathcal{D}_*$ be a pair of contractions. We say that the triple $(G_1,G_2,\Theta)$ is admissible if with
$$
\Phi(z)=G_1^*+zG_2 \text{ and } \Psi(z)=G_2^*+zG_1,
$$the operators $M_\Phi$ and $M_\Psi$ on $H^2(\mathcal{D}_*)$ are contractions, $\mathcal{M}:=\Theta H^2(\mathcal{D})$ is joint $(M_\Phi,M_\Psi)$-invariant and
$$
M_\Phi^*M_\Psi^*|_{\mathcal{M}^\perp}=M_\Psi^*M_\Phi^*|_{\mathcal{M}^\perp}=M_z^*|_{\mathcal{M}^\perp}.
$$
We then say that the triple
$$
P_{\mathcal{M}^\perp}(M_{\Phi},M_{\Psi},M_z)|_{\mathcal{M}^\perp}
$$is the functional model associated with the admissible triple.
\end{definition}
The following theorem was obtained recently in (Corollary 4.2) \cite{D-S-S}.
\begin{theorem}[Das-Sarkar-Sarkar, \cite{D-S-S}]\label{DSS-compression}
  Let $(T_1,T_2)$ be a commuting contractions with $T=T_1T_2$ being pure. Then $(T_1,T_2,T)$ is unitarily equivalent to the functional model of its characteristic triple.
\end{theorem}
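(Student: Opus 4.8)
The plan is to obtain this statement as a direct specialization of the Douglas model (Theorem \ref{the2ndmainresult}) to the case in which the product $T=T_1T_2$ is pure. By the implication $(\mathbf{P})\Rightarrow(\mathbf{D})$ of that theorem, $(T_1,T_2,T)$ is already unitarily equivalent to a compression $P_{\mathcal{M'}}(D_1,D_2,V_D)|_{\mathcal{M'}}$ with $(D_1,D_2)=(M_\Phi\oplus W_1,M_\Psi\oplus W_2)$ acting on $H^2(\mathcal{D}_{T^*})\oplus\mathcal{R}$, where $\Phi(z)=G_1^*+zG_2$ and $\Psi(z)=G_2^*+zG_1$ are assembled precisely from the characteristic triple $(G_1,G_2,\Theta_T)$. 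So essentially all that remains is to show that, under the purity hypothesis, this Douglas compression collapses onto the functional model exactly as that object is defined.

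The first step is to exploit purity to eliminate the unitary summand. Since $T^{*n}\to 0$ strongly, the positive operator $Q$ with $Q^2$ equal to the strong limit of $T^n{T^*}^n$ satisfies $Q=0$; hence $\overline{\operatorname{Ran}Q}=\{0\}$ and its minimal unitary extension space $\mathcal{R}$ is $\{0\}$. Consequently the operators $W=W_1=W_2$ act on the trivial space, the Douglas dilation space $H^2(\mathcal{D}_{T^*})\oplus\mathcal{R}$ reduces to $H^2(\mathcal{D}_{T^*})$, and $(D_1,D_2,V_D)$ becomes $(M_\Phi,M_\Psi,M_z)$ on $H^2(\mathcal{D}_{T^*})$.

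Next I would identify the co-invariant subspace $\mathcal{M'}$ with the Sz.-Nagy--Foias model space. With $Q=0$ the isometry $\tilde\Pi$ of the Douglas construction reduces to the observability operator $\mathcal{O}$ of (\ref{the-iso}), and the telescoping identity $\|\mathcal{O}h\|^2=\|h\|^2-\lim_n\|{T^*}^nh\|^2=\|h\|^2$ shows $\mathcal{O}$ is an isometry. By the classical functional model for a pure contraction (Chapter VI of \cite{Nagy-Foias}), its range is exactly $\mathcal{M}^\perp:=H^2(\mathcal{D}_{T^*})\ominus\Theta_T H^2(\mathcal{D}_T)$, the orthocomplement of $\mathcal{M}=\Theta_T H^2(\mathcal{D}_T)$; here $\Theta_T$ is inner precisely because $T$ is pure, so that $(\mathcal{D}_T,\mathcal{D}_{T^*},\Theta_T)$ is an inner function in the sense demanded by the definition of admissibility. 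Thus $\mathcal{M'}=\mathcal{M}^\perp$, and the co-invariance of $\mathcal{M'}$ together with the product relation $P_{\mathcal{M'}}V_D|_{\mathcal{M'}}=P_{\mathcal{M'}}D_1D_2|_{\mathcal{M'}}=P_{\mathcal{M'}}D_2D_1|_{\mathcal{M'}}$ furnished by $(\mathbf{D})$ translates, in this pure setting, into exactly the admissibility conditions $M_\Phi^*M_\Psi^*|_{\mathcal{M}^\perp}=M_\Psi^*M_\Phi^*|_{\mathcal{M}^\perp}=M_z^*|_{\mathcal{M}^\perp}$ and the joint $(M_\Phi,M_\Psi)$-invariance of $\mathcal{M}$.

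Putting these together, the Douglas compression is $P_{\mathcal{M}^\perp}(M_\Phi,M_\Psi,M_z)|_{\mathcal{M}^\perp}$, which is by definition the functional model associated with the admissible characteristic triple $(G_1,G_2,\Theta_T)$, so $(T_1,T_2,T)$ is unitarily equivalent to it. The main obstacle I anticipate is the clean identification $\operatorname{Ran}\mathcal{O}=\mathcal{M}^\perp$ and the verification that the characteristic triple is genuinely admissible in the pure limit, i.e. checking that the abstract contractivity, invariance and product relations packaged into the definition of admissibility are precisely the ones inherited from Theorem \ref{the2ndmainresult} once the unitary part $\mathcal{R}$ has been discarded; everything else is a matter of transcribing the already-constructed Douglas data.
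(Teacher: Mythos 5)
Your proposal is correct and follows essentially the same route as the paper: specialize Theorem \ref{the2ndmainresult} to the pure case so that $\mathcal{R}=0$, identify $\tilde\Pi$ with the observability operator $\mathcal{O}$ whose range is $(\Theta_T H^2(\mathcal{D}_T))^\perp$, and read off the admissibility of $(G_1,G_2,\Theta_T)$ together with the unitary equivalence from the intertwining relation furnished by the implication $(\mathbf{P})\Rightarrow(\mathbf{D})$. The extra details you supply (why $Q=0$ forces $\mathcal{R}=0$, and the telescoping computation showing $\mathcal{O}$ is an isometry) are consistent with, and merely elaborate on, the paper's argument.
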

In \S \ref{FuncModelSec}, we derive Theorem \ref{DSS-compression} as a corollary to Theorem \ref{the2ndmainresult}. \S \ref{FuncModelSec} also establishes the following characterization for admissible triples.
\begin{theorem}\label{charc-admiss}
Let $(\mathcal{D},\mathcal{D}_*,\Theta)$ be an inner function and $(G_1,G_2)$ on $\mathcal{D}_*$ be a pair of contractions.  Then the triple $(G_1,G_2,\Theta)$ is admissible if and only if it is the characteristic triple for some pair of commuting contractions with their product being pure. In fact, $(G_1,G_2,\Theta)$ coincides with the characteristic triple of its functional model.
\end{theorem}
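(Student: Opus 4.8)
The plan is to establish the two implications separately; the substance lies in the forward direction, where an admissible triple must be reconstructed as the characteristic triple of its own functional model. For the easy direction, suppose $(G_1,G_2,\Theta)=(G_1,G_2,\Theta_T)$ is the characteristic triple of a commuting pair $(T_1,T_2)$ with $T=T_1T_2$ pure. Purity gives $T^{*n}\to0$ strongly, so $Q=0$ and the auxiliary space $\mathcal R$ of the Douglas construction is $\{0\}$; the observability operator $\mathcal O$ is then an isometry of $\mathcal H$ onto the model space $\mathcal M^\perp=H^2(\mathcal D_{T^*})\ominus\Theta_T H^2(\mathcal D_T)$. I would apply part (\textbf{D}) of Theorem \ref{the2ndmainresult} with $\mathcal R=\{0\}$: it furnishes exactly the multipliers $\Phi(z)=G_1^*+zG_2$ and $\Psi(z)=G_2^*+zG_1$ on $H^2(\mathcal D_{T^*})$ together with a joint co-invariant subspace $\mathcal M'$ with $(T_1,T_2,T)$ unitarily equivalent to $P_{\mathcal M'}(M_\Phi,M_\Psi,M_z)|_{\mathcal M'}$. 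Identifying $\mathcal M'$ with $\mathcal M^\perp$ (both equal the range of $\mathcal O$), the three admissibility conditions drop out: $M_\Phi,M_\Psi$ are contractions since $D_1,D_2$ are; $\mathcal M=\Theta_T H^2(\mathcal D_T)$ is joint $(M_\Phi,M_\Psi)$-invariant since $\mathcal M^\perp$ is co-invariant; and taking adjoints in the identity $P_{\mathcal M'}M_z|_{\mathcal M'}=P_{\mathcal M'}M_\Phi M_\Psi|_{\mathcal M'}=P_{\mathcal M'}M_\Psi M_\Phi|_{\mathcal M'}$ of part (\textbf{D}), while using co-invariance to drop the projections, yields $M_\Phi^*M_\Psi^*|_{\mathcal M^\perp}=M_\Psi^*M_\Phi^*|_{\mathcal M^\perp}=M_z^*|_{\mathcal M^\perp}$.

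For the forward direction, start from an admissible $(G_1,G_2,\Theta)$ and set $(\hat T_1,\hat T_2,\hat T):=P_{\mathcal M^\perp}(M_\Phi,M_\Psi,M_z)|_{\mathcal M^\perp}$, its functional model. This is a commuting pair of contractions with pure product: $\hat T_1,\hat T_2$ are contractions as compressions of $M_\Phi,M_\Psi$ to the co-invariant subspace $\mathcal M^\perp$; co-invariance gives $\hat T_i^*=M_{(\cdot)}^*|_{\mathcal M^\perp}$, so the admissibility relation $M_\Phi^*M_\Psi^*|_{\mathcal M^\perp}=M_\Psi^*M_\Phi^*|_{\mathcal M^\perp}$ forces $\hat T_1^*\hat T_2^*=\hat T_2^*\hat T_1^*$, hence $\hat T_1\hat T_2=\hat T_2\hat T_1$, while $M_\Phi^*M_\Psi^*|_{\mathcal M^\perp}=M_z^*|_{\mathcal M^\perp}$ forces $\hat T^*=\hat T_1^*\hat T_2^*=(\hat T_1\hat T_2)^*$, i.e.\ $\hat T=\hat T_1\hat T_2$. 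Finally $\mathcal M=\Theta H^2(\mathcal D)$ is $M_z$-invariant because $\Theta$ is inner, so $\hat T^{*n}=M_z^{*n}|_{\mathcal M^\perp}\to0$ strongly and $\hat T$ is pure.

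It then remains to show that the characteristic triple $(\hat G_1,\hat G_2,\Theta_{\hat T})$ of $(\hat T_1,\hat T_2)$ coincides with $(G_1,G_2,\Theta)$. The characteristic-function part is classical Sz.-Nagy--Foias theory: $\hat T$ is the model operator of the inner function $\Theta$, so $\Theta_{\hat T}$ coincides with $\Theta$ and provides the unitary $u_*:\mathcal D_{\hat T^*}\to\mathcal D_*$ (Chapter VI of \cite{Nagy-Foias}). For the operator pair I would argue by uniqueness. By construction $(\hat T_1,\hat T_2,\hat T)$ is the functional model of $(G_1,G_2,\Theta)$; on the other hand, since $\hat T$ is pure, Theorem \ref{DSS-compression} exhibits the same triple, up to unitary equivalence, as the functional model of its own characteristic triple $(\hat G_1,\hat G_2,\Theta_{\hat T})$. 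Thus the two admissible triples have unitarily equivalent functional models, and the forward implication reduces to a faithfulness assertion: an admissible triple is determined, up to coincidence, by the unitary-equivalence class of the triple $(\hat T_1,\hat T_2,\hat T)$ it produces.

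This faithfulness is the crux and the step I expect to be hardest. To prove it I would recover $G_1,G_2$ intrinsically from $(\hat T_1,\hat T_2,\hat T)$. Viewing $\mathcal D_*$ as the constant functions inside $H^2(\mathcal D_*)$, the symbols act by $M_\Phi\xi=G_1^*\xi+zG_2\xi$ and $M_\Psi\xi=G_2^*\xi+zG_1\xi$, displaying $G_1^*,G_2$ and $G_2^*,G_1$ as the degree-$0$ and degree-$1$ Taylor coefficients; compressing to $\mathcal M^\perp$ and pairing through the defect operator $D_{\hat T^*}$ should express the sesquilinear forms of $G_1,G_2$ in terms of $\hat T_1,\hat T_2,\hat T$ alone. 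Inner-ness of $\Theta$ makes the embedding $\mathcal D_{\hat T^*}\cong\mathcal D_*$ and the associated pairing nondegenerate, so this recovery is canonical and independent of the admissible presentation. Transporting it through $u_*$ then yields $u_*\hat G_i=G_iu_*$ for $i=1,2$, which together with the coincidence of $\Theta$ and $\Theta_{\hat T}$ gives the coincidence of the two characteristic triples and closes the argument.
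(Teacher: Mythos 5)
Your converse direction and the first half of your forward direction track the paper's proof closely: the paper likewise defines $(T_1,T_2):=P_{\mathcal M^\perp}(M_{G_1^*+zG_2},M_{G_2^*+zG_1})|_{\mathcal M^\perp}$, reads off commutativity, contractivity and purity of the product from the admissibility conditions, and invokes the Sz.-Nagy--Foias model theorem (Theorem \ref{NFFuncModel}) to get unitaries $u,u_*$ with $\Theta_{T}u=u_*\Theta$. The place where you diverge is exactly the step you flag as ``the crux and the step I expect to be hardest,'' and there your proposal stops at a sketch: you assert that pairing through $D_{\hat T^*}$ ``should express'' $G_1,G_2$ intrinsically and that inner-ness makes the embedding $\mathcal D_{\hat T^*}\cong\mathcal D_*$ nondegenerate, but you neither write down the identity that does this nor justify surjectivity of that embedding (inner-ness of $\Theta$ alone does not give $\overline{E_0\mathcal M^\perp}=\mathcal D_*$, where $E_0$ is the projection onto constants; that is the purely-contractive issue hidden in Theorem \ref{NFFuncModel}), nor check that the resulting identification agrees with the $u_*$ coming from the coincidence of $\Theta$ with $\Theta_{\hat T}$. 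So the ``faithfulness assertion'' on which your whole forward implication rests is left unproved.

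The paper closes precisely this gap with machinery you did not use: it observes that \emph{both} triples $(M_{G_1^*+zG_2},M_{G_2^*+zG_1},M_z)$ on $H^2(\mathcal D_*)$ and $(M_{G_1'^*+zG_2'},M_{G_2'^*+zG_1'},M_z)$ on $H^2(\mathcal D_{T^*})$ (the latter built from the characteristic triple) belong to the family $\mathcal U_{\underline T}$ of Theorem \ref{uniqueness}; that theorem produces a unitary $U_*$ intertwining them and restricting to the identity on the model space, and uniqueness of the intertwiner between the two minimal isometric dilations $M_z$ forces $U_*=I_{H^2}\otimes u_*$, whence $u_*G_i=G_i'u_*$. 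If you want to salvage your direct route instead, the missing computation is the operator identity $M_{G_1^*+zG_2}^*-M_{G_2^*+zG_1}M_z^*=G_1E_0$ on $H^2(\mathcal D_*)$; compressing it to $\mathcal M^\perp$ gives $\hat T_1^*-\hat T_2\hat T^*=D_{\hat T^*}\tilde G_1 D_{\hat T^*}$ with $\tilde G_1=\omega^*G_1\omega$ for the isometry $\omega:\mathcal D_{\hat T^*}\to\mathcal D_*$ defined by $\omega D_{\hat T^*}=E_0|_{\mathcal M^\perp}$, and then Theorem \ref{fundeqnsThm} (in its adjoint form, Remark \ref{fundeqnsThm-Rmk}) supplies the uniqueness that identifies $\tilde G_1$ with $\hat G_1$. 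Until one of these two arguments is actually carried out, the equivalence is not established.
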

Berger, Coburn and Lebow in (Theorem 3.1, \cite{B-C-L}) found a concrete model for $n$-tuples of commuting isometries, which played a basic role in their investigation of structure of the $C^*$-algebra generated by the commuting isometries and Fredholm theory of its elements. We state their result in the particular case when $n=2$.
\begin{theorem}[Berger-Coburn-Lebow, \cite{B-C-L}]\label{B-C-L}
Let $(V_1,V_2)$ be a pair of commuting isometries on a Hilbert space $\mathcal H$. Then there exists a Hilbert subspace $\mathcal H_u$ of $\mathcal H$ such that each $V_j|_{\mathcal H_u}$ is unitary, $\mathcal H_u^\perp$ is unitarily equivalent to $H^2(\mathcal F)$ for some Hilbert space $\mathcal F$ and under the same unitary
$$
(V_1,V_2, V_1V_2)|_{\mathcal H_u^\perp}\text{ is unitarily equivalent to }(M_{P^\perp U+zPU},M_{U^*P+zU^*P^\perp},M_z),
$$for some unitary $U$ and projection $P$ in $\mathcal B(\mathcal F)$.
\end{theorem}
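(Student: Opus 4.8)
The plan is to reduce everything to the structure of the single isometry $V:=V_1V_2$ and then to analyze how $V_1,V_2$ sit inside its commutant. First I would apply the Wold decomposition to $V$, writing $\mathcal{H}=\mathcal{H}_u\oplus\mathcal{H}_s$, where $\mathcal{H}_u=\bigcap_{n\ge 0}V^n\mathcal{H}$ is the largest subspace on which $V$ is unitary and $V|_{\mathcal{H}_s}$ is a pure shift. Since $V_1$ and $V_2$ commute with $V$, each preserves $\mathcal{H}_u$; and because $V|_{\mathcal{H}_u}$ is onto while $V_1,V_2$ are isometries with $V_1V_2\mathcal{H}_u=\mathcal{H}_u$, a short argument forces $V_1\mathcal{H}_u=V_2\mathcal{H}_u=\mathcal{H}_u$, so that $V_1|_{\mathcal{H}_u}$ and $V_2|_{\mathcal{H}_u}$ are unitary and (using $V_j^*V_j=I$) $\mathcal{H}_u$ in fact reduces both $V_1$ and $V_2$. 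This identifies $\mathcal{H}_u$ with the required unitary part and lets me pass to $\mathcal{H}_s=\mathcal{H}_u^\perp$, which reduces $V_1,V_2$ as well. On $\mathcal{H}_s$ the shift $V$ has wandering subspace $\mathcal{F}:=\ker V^*|_{\mathcal{H}_s}$, and the Wold decomposition supplies a unitary identifying $\mathcal{H}_s$ with $H^2(\mathcal{F})$ and carrying $V$ to $M_z$.

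Next I would determine $V_1,V_2$ on $H^2(\mathcal{F})$. Since $V_1V_2=V_2V_1$, each $V_j$ commutes with $V=V_1V_2=M_z$; by the (vector-valued) characterization of the commutant of the unilateral shift, there are $\Phi,\Psi\in H^\infty(\mathcal{B}(\mathcal{F}))$ with $V_1=M_\Phi$ and $V_2=M_\Psi$. The hypotheses translate into algebraic constraints: $M_\Phi,M_\Psi$ are isometries, so $\Phi$ and $\Psi$ are inner; and $M_\Phi M_\Psi=M_\Psi M_\Phi=M_z$ gives $\Phi(z)\Psi(z)=\Psi(z)\Phi(z)=zI$ on $\mathbb{D}$.

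The heart of the argument — and the step I expect to be the main obstacle — is to deduce from $\Phi\Psi=zI$ that $\Phi$ is affine in $z$. Because the right-hand side $zI$ is a scalar unitary on $\mathbb{T}$ while $\Phi(e^{i\theta})$ is already isometric a.e., the factorization forces $\Phi(e^{i\theta})$ to be \emph{unitary} a.e., whence $\Psi(e^{i\theta})=e^{i\theta}\Phi(e^{i\theta})^*$. Writing the boundary expansion $\Phi(e^{i\theta})=\sum_{n\ge0}\Phi_ne^{in\theta}$ and comparing Fourier coefficients, the requirement that $\Psi$ have no negative frequencies (it lies in $H^\infty$) forces $\Phi_n=0$ for all $n\ge 2$. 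Hence
$$
\Phi(z)=\Phi_0+z\Phi_1,\qquad \Psi(z)=\Phi_1^*+z\Phi_0^*.
$$
This degree reduction is the crux; everything after it is linear algebra on the two coefficients, so the genuinely delicate point is the passage to boundary values and the Fourier comparison here.

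Finally I would extract the unitary $U$ and projection $P$. Expanding the condition that $\Phi(e^{i\theta})$ be unitary for all $\theta$ and matching Fourier coefficients yields
$$
\Phi_0^*\Phi_1=\Phi_1^*\Phi_0=0,\quad \Phi_0\Phi_1^*=\Phi_1\Phi_0^*=0,\quad \Phi_0^*\Phi_0+\Phi_1^*\Phi_1=\Phi_0\Phi_0^*+\Phi_1\Phi_1^*=I.
$$
Setting $U:=\Phi_0+\Phi_1$ and $P:=\Phi_1\Phi_1^*$, the first two batches of relations give $U^*U=UU^*=I$ and $P^2=P=P^*$, so $U$ is unitary and $P$ a projection with $P^\perp=\Phi_0\Phi_0^*$. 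Since $P^\perp$ and $P$ are the projections onto $\overline{\operatorname{ran}\Phi_0}$ and $\overline{\operatorname{ran}\Phi_1}$, a direct check using $\Phi_0^*\Phi_1=\Phi_1^*\Phi_0=0$ shows $P^\perp U=\Phi_0$ and $PU=\Phi_1$, so that
$$
\Phi(z)=P^\perp U+zPU,\qquad \Psi(z)=U^*P+zU^*P^\perp,
$$
which is exactly the asserted model on $\mathcal{H}_s\cong H^2(\mathcal{F})$, with $M_\Phi M_\Psi=M_z$ reproducing $V_1V_2=M_z$. Combined with the first paragraph, this proves the theorem; the Wold-decomposition bookkeeping and the closing coefficient algebra are routine once the affineness of $\Phi$ is in hand.
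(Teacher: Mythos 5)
Your proof is correct. It is, however, a genuinely different route from the one the paper takes for this statement: the paper deduces Theorem \ref{B-C-L} as a corollary of the Douglas-model dilation theorem (Theorem \ref{the2ndmainresult}), the only new ingredient being Lemma \ref{justlikethat}, which shows that for commuting isometries the And\^o-tuple maps $\Gamma_\nu$ and $U_\nu$ are surjective, so that the dilation collapses to a unitary equivalence. You instead give a direct, self-contained argument: Wold decomposition of $V=V_1V_2$, reduction of $\mathcal H_u$, the commutant of the shift to get $V_j=M_\Phi,M_\Psi$, the boundary-value/Fourier-coefficient comparison forcing $\Phi(z)=\Phi_0+z\Phi_1$, and then the algebra of the coefficients. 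This is essentially the argument the paper itself runs inside the proof of Lemma \ref{special-ext} together with Lemma \ref{relations-of-E-lem} (where the hypotheses $E_1E_2=E_2E_1=0$ and $E_1E_1^*+E_2^*E_2=E_1^*E_1+E_2E_2^*=I$ are exactly your coefficient relations with $E_1=\Phi_0$, $E_2=\Phi_1^*$); so your route makes explicit, and renders non-circular, what the paper's corollary-style proof leaves implicit, since Lemma \ref{special-ext} is itself invoked in the proof of Theorem \ref{the2ndmainresult}. One small improvement you offer over the paper: where Lemma \ref{relations-of-E-lem} produces $U$ by polar decomposition, your closed formulas $U=\Phi_0+\Phi_1$ and $P=\Phi_1\Phi_1^*$ give the unitary and the projection explicitly, and the verifications $U^*U=UU^*=I$, $P^2=P=P^*$, $P^\perp U=\Phi_0$, $PU=\Phi_1$ all follow from the listed relations exactly as you indicate.
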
Later in \cite{B-D-F}, Bercovici, Douglas and Foias reconsidered this classification problem for commuting isometries and carried the analysis beyond. In an attempt to generalize these classification results, the following was obtained in (Theorem 3.2) \cite{D-S-S}.
\begin{theorem}[Das-Sarkar-Sarkar, \cite{D-S-S}]\label{D-S-S}
Let $(T_1,T_2)$ be a pair of commuting contractions on a Hilbert space $\mathcal H$ such that their product $T=T_1T_2$ is pure. Then there exist a Hilbert space $\mathcal{F}_*$, a unitary $U'$, a projection $P'$ in $\mathcal B(\mathcal{F}_*)$ and a subspace $\mathcal M$ of $H^2(\mathcal{F}_*)$ jointly co-invariant under $(M_{P'^\perp U'+zP'U'},M_{U'^*P'+zU'^*P'^\perp},M_z)$ such that
$$
(T_1,T_2,T) \text{ is unitarily equivalent to } P_{{\mathcal M}}(M_{P'^\perp U'+zP'U'},M_{U'^*P'+zU'^*P'^\perp},M_z)|_{_{{\mathcal M}}}.
$$
\end{theorem}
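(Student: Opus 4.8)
The plan is to obtain the statement from the Douglas model, Theorem~\ref{the2ndmainresult}, specialised to the case in which the product $T=T_1T_2$ is pure. For an \emph{arbitrary} commuting pair $(T_1,T_2)$, part~(\textbf{A}$'$) of that theorem already produces an And\^o dilation $(V^D_1,V^D_2)$ on $H^2(\mathcal{F}_*)\oplus\mathcal{R}$ of Berger--Coburn--Lebow type, together with a joint co-invariant subspace $\mathcal{M}=\operatorname{Ran}\tilde\Pi$ onto whose compression the triple $(T_1,T_2,T)$ is recovered. The entire task is therefore to show that purity of $T$ annihilates the unitary summand $\mathcal{R}$, so that the model collapses onto $H^2(\mathcal{F}_*)$ alone.

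First I would recall that the positive operator $Q$ underlying the Douglas construction satisfies $Q^2=$ the strong operator limit of $T^n{T^*}^n$. If $T$ is pure, i.e. $T^{*n}\to0$ strongly, then $T^n{T^*}^n\to0$ strongly and hence $Q=0$. Consequently $\overline{\operatorname{Ran}Q}=\{0\}$, the isometry $X^*$ of~(\ref{TheXIntro}) acts on the zero space, and its minimal unitary extension $W$ --- together with the commuting unitaries $W_1,W_2$ satisfying $W_1W_2=W$ --- lives on $\mathcal{R}=\{0\}$. Thus the Douglas space reduces to $H^2(\mathcal{D}_{T^*})\subseteq H^2(\mathcal{F}_*)$, and the embedding $\tilde\Pi=\big((I_{H^2}\otimes\Gamma)\oplus I_{\mathcal{R}}\big)\Pi_D$ becomes $(I_{H^2}\otimes\Gamma)\mathcal{O}$, which maps $\mathcal{H}$ isometrically into $H^2(\mathcal{F}_*)$; here the observability operator $\mathcal{O}$ of~(\ref{the-iso}) is itself isometric precisely because $T$ is pure, via the telescoping identity $\sum_{n\geq0}\|D_{T^*}T^{*n}h\|^2=\|h\|^2-\lim_n\|T^{*n}h\|^2$. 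With $\mathcal{R}=\{0\}$ the pair $(V^D_1,V^D_2)$ is simply the pair of commuting shift-type isometries on $H^2(\mathcal{F}_*)$ of part~(\textbf{A}$'$), whose product is $M_z$, and $\mathcal{M}=\operatorname{Ran}\tilde\Pi$ is joint co-invariant; the intertwining relation of Theorem~\ref{the2ndmainresult} then yields at once that $(T_1,T_2,T)$ is unitarily equivalent to $P_{\mathcal{M}}(V^D_1,V^D_2,M_z)|_{\mathcal{M}}$.

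What remains is purely cosmetic. The isometric symbols delivered by Theorem~\ref{the2ndmainresult}, namely $U'^*P'^\perp+zU'^*P'$ and $P'U'+zP'^\perp U'$, must be matched with the Berger--Coburn--Lebow form $P'^\perp U'+zP'U'$ and $U'^*P'+zU'^*P'^\perp$ demanded in the statement. I would reconcile them by the reparametrisation $(\tilde U,\tilde P)=(U'^*,\,U'^*P'U')$, a one-line computation showing that the former pair is exactly the Berger--Coburn--Lebow pair for $(\tilde U,\tilde P)$, so that after relabelling the unitary and projection the required form is attained; both presentations are covered by the existence assertion of Theorem~\ref{B-C-L}, and both have product $M_z$ since each symbol product collapses to $zI_{\mathcal{F}_*}$. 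I expect no genuine obstacle: the two points needing care --- that purity forces $Q=0$ and hence $\mathcal{R}=\{0\}$, and the bookkeeping between the two equivalent Berger--Coburn--Lebow parametrisations --- are both immediate, the first from the definition of $Q$ and the second from the substitution above, with all the real content already absorbed into Theorem~\ref{the2ndmainresult}.
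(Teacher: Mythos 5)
Your proof is correct and follows the same route as the paper, which disposes of the theorem in one line by citing the proof of $({\bf P})\Leftrightarrow({\bf A'})$ in Theorem \ref{the2ndmainresult} together with the observation that purity of $T$ forces $\mathcal{R}=0$. You in fact supply more detail than the paper does, in particular the check that $(\tilde U,\tilde P)=(U'^*,U'^*P'U')$ converts the Douglas-model symbols into the Berger--Coburn--Lebow form appearing in the statement, a bookkeeping step the paper silently elides.
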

Both Theorem \ref{B-C-L} and Theorem \ref{D-S-S} are shown in \S \ref{FuncModelSec} to be corollaries to Theorem \ref{the2ndmainresult}. \S \ref{Schaffer} and \S \ref{Douglas} prove Theorem \ref{themainresult} and Theorem \ref{the2ndmainresult}, respectively. \S \ref{uniquenessSec} proves Theorem \ref{uniqueness} from which follows Theorem \ref{compression-uniquenss}. Theorem \ref{uniqueness-pure} and Theorem \ref{charc-admiss} are proved in \S \ref{FuncModelSec}. And finally in \S \ref{ConRem} we explain how this work is inspired by the tetrablock theory -- the work of Bhattacharyya \cite{sir's tetrablock paper} and discuss a few open problems.
%

\section{The Sch\"affer model for And\^o dilation--proof of Theorem \ref{themainresult}}\label{Schaffer}
Associated to a pair of commuting contractions, there is a unitary and an isometry, known from the time of And\^o. We start by defining these operators as they play a vital role in the construction. Let $(T_1,T_2)$ be a pair of commuting contractions on a Hilbert space $\mathcal H$. Denote by $T$ the product $T_1T_2$. Then
\begin{eqnarray*}
D_T^2=I-T_2^*T_2+T_2^*T_2-T_2^*T_1^*T_1T_2=D_{T_2}^2+T_2^*D_{T_1}^2T_2,
\end{eqnarray*}
which shows that the operator $\Lambda :\mathcal D_T \to \mathcal D_{T_1}\oplus \mathcal D_{T_2}$ defined by
\begin{eqnarray}\label{V}
\Lambda D_Th=D_{T_1}T_2h\oplus D_{T_2}h \text{  for all $h\in \mathcal H$},
\end{eqnarray} is an isometry. Also for every pair of commuting contractions $(T_1,T_2)$, we have
\begin{eqnarray*}
D_{T_2}^2+T_2^*D_{T_1}^2T_2=D_{T_1}^2+T_1^*D_{T_2}^2T_1,
\end{eqnarray*} which shows that the operator $U:\text{Ran}\Lambda \to \mathcal D_{T_1}\oplus \mathcal D_{T_2}$ defined by
\begin{eqnarray}\label{TheUnitary}
U\big(D_{T_1}T_2h\oplus D_{T_2}h\big)=D_{T_1}h\oplus D_{T_2}T_1h \text{  for all $h\in \mathcal H$},
\end{eqnarray}is an isometry. Now one can add an infinite dimensional Hilbert space $l^2$ to $\mathcal D_{T_1}\oplus \mathcal D_{T_2}$ if necessary, to extend $U$ as a unitary. We shall denote the extended unitary operator by $U$ itself and
\begin{eqnarray}\label{the aux space}
\mathcal F:=\mathcal D_{T_1}\oplus \mathcal D_{T_2} \text{ or }\mathcal F:=\mathcal D_{T_1}\oplus (\mathcal D_{T_2}\oplus l^2),
\end{eqnarray}
where $l^2$ is the Hilbert space of square summable sequences (when needed). Let $f$ and $g$ be in $\mathcal D_{T_1}$ and $\mathcal D_{T_2}$, respectively. We denote the member $f\oplus g\oplus 0$ of $\mathcal D_{T_1}\oplus (\mathcal D_{T_2}\oplus l^2)$ just by $f\oplus g$. Armed with this unitary $U$ and the isometry $\Lambda$, we proceed to construct the dilation.
\begin{definition}\label{Ando tuple}
For a pair of commuting contractions $(T_1,T_2)$, let the Hilbert space $\mathcal{F}$, the isometry $\Lambda$ and the unitary $U$ be as defined in (\ref{the aux space}), (\ref{V}) and (\ref{TheUnitary}), respectively. Let $P$ denote the projection of $\mathcal{F}$ onto $\mathcal{D}_{T_1}$ (embedded in the canonical way). The tuple $(\mathcal F,\Lambda, P, U)$ is called the And\^o tuple for $(T_1,T_2)$.
\end{definition}
Let $\mathcal{F}, P$ and $U$ be as in the And\^o tuple for $(T_1,T_2)$. Define two bounded operators $E_1$ and $E_2$ on $\mathcal F$ by
\begin{eqnarray}\label{thefundonE}
E_1:=P^\perp U \text{ and }E_2^*:=PU.
\end{eqnarray} Note that $E_1$ and $E_2$ have the following properties:
\begin{eqnarray}\label{thefundon-E}
E_1\Lambda D_Th=E_1\big(D_{T_1}T_2h\oplus D_{T_2}h\big)&=&0\oplus D_{T_2}T_1h  \text{ and }\\
E_2^*\Lambda D_Th=E_2^*\big(D_{T_1}T_2h\oplus D_{T_2}h\big)&=&D_{T_1}h\oplus 0, \text{  for all $h\in \mathcal H$}.
\end{eqnarray}
\begin{lemma}\label{relations-of-E-lem}
Let $\mathcal{F}$ be any Hilbert space and $E_1,E_2$ be in $\mathcal{B}(\mathcal{F})$. Then $$(E_1,E_2)=(P^\perp U,U^*P)$$for some projection $P$ and a unitary $U$ in $\mathcal{B}(\mathcal{F})$ if and only if $E_1$, $E_2$ satisfy
$$
E_1E_2=E_2E_1=0 \text{ and } E_1E_1^*+E_2^*E_2=E_1^*E_1+E_2E_2^*=I_{\mathcal F}.
$$
\end{lemma}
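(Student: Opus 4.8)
The statement is a biconditional, and the plan is to settle the ``only if'' direction by direct substitution while concentrating the real work on the ``if'' direction, where the projection $P$ and the unitary $U$ must be manufactured out of the four algebraic identities. For ``only if'' I would simply insert $E_1=P^\perp U$ and $E_2=U^*P$ and compute, using $P^\perp P=PP^\perp=0$, $P^2=P=P^*$ and $UU^*=U^*U=I_{\mathcal F}$: one gets $E_1E_2=P^\perp UU^*P=P^\perp P=0$ and likewise $E_2E_1=0$, while $E_1E_1^*=P^\perp$ and $E_2^*E_2=P$ sum to $I_{\mathcal F}$, and $E_1^*E_1=U^*P^\perp U$ and $E_2E_2^*=U^*PU$ sum to $U^*(P^\perp+P)U=I_{\mathcal F}$. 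This half is routine.

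The substance lies in the converse, and the decisive step is to guess $U$ correctly. If a decomposition $E_1=P^\perp U$, $E_2=U^*P$ were to exist, then $E_1+E_2^*=P^\perp U+PU=U$, so the candidate is forced: I would set $U:=E_1+E_2^*$. To verify that $U$ is unitary, I expand $U^*U$ and $UU^*$; all cross terms vanish because $E_1E_2=E_2E_1=0$ give $E_1^*E_2^*=E_2^*E_1^*=0$ on passing to adjoints, leaving $U^*U=E_1^*E_1+E_2E_2^*=I_{\mathcal F}$ and $UU^*=E_1E_1^*+E_2^*E_2=I_{\mathcal F}$, which are precisely the two summation hypotheses.

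With $U$ unitary I would define $P:=UE_2=(E_1+E_2^*)E_2=E_2^*E_2$, which is self-adjoint. For idempotency, the identity $E_2^*E_2=I_{\mathcal F}-E_1E_1^*$ together with $E_2E_1=0$ gives $E_2E_2^*E_2=E_2$, so $P^2=E_2^*(E_2E_2^*E_2)=E_2^*E_2=P$; hence $P$ is a projection and $P^\perp=E_1E_1^*$. It then remains to recover the two target formulas, and both follow from the same cancellations: $P^\perp U=E_1E_1^*(E_1+E_2^*)=E_1E_1^*E_1=E_1$ (using $E_1^*E_2^*=0$ and $E_1E_1^*E_1=E_1(I_{\mathcal F}-E_2E_2^*)=E_1$ via $E_1E_2=0$), and $U^*P=(E_1^*+E_2)E_2^*E_2=E_2E_2^*E_2=E_2$. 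The only genuine obstacle is spotting $U=E_1+E_2^*$; once that is in hand, unitarity, the choice $P=E_2^*E_2$, and the two identities all fall out of the vanishing products $E_1E_2=E_2E_1=0$ and the two resolutions of the identity.
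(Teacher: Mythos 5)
Your proof is correct, but it takes a genuinely different route from the paper's. The paper first observes that the hypotheses force $E_1E_1^*E_1=E_1$ and $E_2E_2^*E_2=E_2$, so that $E_1,E_2$ are partial isometries whose initial and final projections give the orthogonal decompositions $\operatorname{Ran}E_1\oplus\operatorname{Ran}E_2^*=\mathcal F=\operatorname{Ran}E_1^*\oplus\operatorname{Ran}E_2$; it then invokes polar decomposition to produce partial unitaries $U_1,U_2$ on these summands and assembles $U^*:=U_1\oplus U_2$ and $P^\perp:=E_1E_1^*$. You instead read off the only possible candidate $U=E_1+E_2^*$ from the desired conclusion, verify unitarity by a two-line expansion in which all cross terms vanish (using $E_1^*E_2^*=(E_2E_1)^*=0$ and $E_2^*E_1^*=(E_1E_2)^*=0$), and set $P=E_2^*E_2$; every remaining identity, including the partial-isometry relations $E_1E_1^*E_1=E_1$ and $E_2E_2^*E_2=E_2$ that the paper also needs, follows by the same cancellations. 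The two arguments are built on the same underlying structure (your $P$ and $P^\perp=E_1E_1^*$ are exactly the paper's, and your $U$ agrees with the one the paper assembles), but yours is more economical: it avoids polar decomposition and the geometric range-decomposition language entirely, produces $U$ and $P$ by closed formulas rather than by an existence argument, and makes the uniqueness of the pair $(P,U)$ (given the normalization $E_1+E_2^*=U$) transparent. The trade-off is that the paper's version makes the partial-isometry geometry explicit, which is the structural picture used later in the Berger--Coburn--Lebow model; your computation hides that picture but is self-contained and complete.
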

\begin{proof}
The `only if' part is obvious. The virtue of the converse direction was used in the proof of the Berger-Coburn-Lebow model theory for commuting isometries. But a detailed proof is not found in their paper. So, we prove it here.

Note that any two operators satisfying the above conditions turn out to be partial isometries, because we easily have $E_1E_1^*E_1=E_1$ and $E_2E_2^*E_2=E_2$, which is an equivalent characterization for partial isometries. This again is equivalent to all of $E_1E_1^*, E_1^*E_1,E_2E_2^*$ and $E_2^*E_2$ being projections onto $Ran E_1$, $Ran E_1^*$, $Ran E_2$ and $Ran E_2^*$, respectively. Moreover, the given hypotheses imply that
$$Ran E_1 \oplus Ran E_2^*=\mathcal{F}=Ran E_1^* \oplus Ran E_2.$$ By polar decomposition theorem, we have unitaries $U_1:Ran E_1\to Ran E_1^*$ and $U_2:Ran E_2^*\to Ran E_2$ such that $$E_1^*=U_1(E_1E_1^*)^{\frac{1}{2}} \text{ and }E_2=U_2(E_2^*E_2)^{\frac{1}{2}}.$$Denote the projection $E_1E_1^*$ by $P^\perp$ and the unitary
$$U^*:=U_1\oplus U_2:Ran E_1 \oplus Ran E_2^*\to Ran E_1^* \oplus Ran E_2.$$ With these, the operators $E_1$ and $E_2$ are as in (\ref{thefundonE}).
\end{proof}
We are now ready to do the first construction of an And\^o dilation. Note the similarities with the Sch\"affer construction of minimal isometric dilation for a single contraction.

\begin{theorem}[Sch\"affer model]\label{a-dilation}
Let $(T_1,T_2)$ be a commuting pair of contractions on a Hilbert space $\mathcal H$ and $T=T_1T_2$. Define two operators $V_1,V_2:\mathcal H \oplus H^2(\mathcal F)\to \mathcal H \oplus H^2(\mathcal F)$ by
\begin{eqnarray}\label{feature-demi-dilation}
V_1=\left(
     \begin{array}{cc}
      T_1 & 0 \\
      E_2^*\Lambda D_T & M_{E_1+zE_2^*} \\
     \end{array}
   \right)\text{ and } V_2=\left(
     \begin{array}{cc}
      T_2 & 0 \\
      E_1^*\Lambda D_T & M_{E_2+zE_1^*} \\
     \end{array}
   \right).
\end{eqnarray}
The pair $(V_1,V_2)$ is an isometric dilation of $(T_1,T_2)$.
\end{theorem}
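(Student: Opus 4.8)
The plan is to exploit the lower-triangular block form of $V_1,V_2$ with respect to $\mathcal H\oplus H^2(\mathcal F)$. Because the $(1,2)$ block of each $V_i$ is zero, the subspace $\mathcal H\oplus 0$ is co-invariant and $V_i^*|_{\mathcal H\oplus 0}=T_i^*$; hence with the isometry $\Pi h=h\oplus 0$ one has $V_i^*\Pi=\Pi T_i^*$ for free, and the dilation property is immediate. The real content is therefore just two assertions: each $V_i$ is an isometry, and $V_1V_2=V_2V_1$. I would abbreviate $C_1=E_2^*\Lambda D_T$, $\varphi_1(z)=E_1+zE_2^*$ and $C_2=E_1^*\Lambda D_T$, $\varphi_2(z)=E_2+zE_1^*$, so that the proof becomes bookkeeping with the two families of identities already in hand: Lemma \ref{relations-of-E-lem} and the And\^o relations (\ref{thefundon-E}).

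For the isometry claim I would expand $V_i^*V_i$ into its three blocks. The $(2,2)$ block requires $M_{\varphi_i}$ to be an isometry; since $\varphi_1(z)^*\varphi_1(z)=E_1^*E_1+E_2E_2^*+z\,E_1^*E_2^*+\bar z\,E_2E_1$, the relations $E_2E_1=0$ (whence $E_1^*E_2^*=0$) and $E_1^*E_1+E_2E_2^*=I$ of Lemma \ref{relations-of-E-lem} make $\varphi_1$ pointwise unitary on $\mathbb T$, hence $M_{\varphi_1}$ inner, and symmetrically for $\varphi_2$. The cross block $C_1^*M_{\varphi_1}$ sends $f$ to $D_T\Lambda^*E_2E_1f(0)$, which vanishes again by $E_2E_1=0$ (and $E_1E_2=0$ for $V_2$). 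The $(1,1)$ block needs $T_i^*T_i+C_i^*C_i=I$; here I would read from (\ref{thefundon-E}) that $\|C_1h\|=\|E_2^*\Lambda D_Th\|=\|D_{T_1}h\|$, and from $E_1=P^\perp U$ (so $E_1E_1^*=P^\perp$) that $\|C_2h\|=\|P^\perp\Lambda D_Th\|=\|D_{T_2}h\|$, giving $C_i^*C_i=D_{T_i}^2=I-T_i^*T_i$.

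For commutativity I would multiply out the blocks. The $(1,1)$ entry is $T_1T_2=T_2T_1$, given. The $(2,2)$ entry asks $\varphi_1\varphi_2=\varphi_2\varphi_1$, and a direct expansion using Lemma \ref{relations-of-E-lem} collapses both products to $zI_{\mathcal F}$, so in fact $M_{\varphi_1}M_{\varphi_2}=M_{\varphi_2}M_{\varphi_1}=M_z$. The $(2,1)$ entries, once the terms linear in $z$ disappear by $E_1^*E_2^*=E_2^*E_1^*=0$, reduce to the single identity of maps $\mathcal H\to\mathcal F$,
\[
E_2^*\Lambda D_TT_2+E_1E_1^*\Lambda D_T=E_1^*\Lambda D_TT_1+E_2E_2^*\Lambda D_T,
\]
which I expect to be the crux. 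The left side evaluates termwise, via (\ref{thefundon-E}) and $E_1E_1^*=P^\perp$, to $D_{T_1}T_2h\oplus0$ plus $0\oplus D_{T_2}h$, i.e.\ to $\Lambda D_Th$. For the right side the subtlety is that $U^*$ lands on vectors not of the canonical shape $\Lambda D_T(\cdot)$; the device is to recombine before inverting $U$: $E_1^*\Lambda D_TT_1h=U^*(0\oplus D_{T_2}T_1h)$ and $E_2E_2^*\Lambda D_Th=U^*(D_{T_1}h\oplus0)$ sum to $U^*(D_{T_1}h\oplus D_{T_2}T_1h)$, which equals $\Lambda D_Th$ by the defining relation (\ref{TheUnitary}) of $U$.

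The main obstacle is exactly this last matching of the $(2,1)$ columns. It is the only step that uses the concrete And\^o unitary $U$ of (\ref{TheUnitary}) rather than merely the abstract relations of Lemma \ref{relations-of-E-lem}, and it succeeds only because the two $U^*$-terms recombine so that their common argument returns to the range of $\Lambda$. Everything else is the routine block computation sketched above.
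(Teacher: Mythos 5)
Your proposal is correct and follows essentially the same route as the paper's proof: the dilation property is read off from the lower-triangular block form, the isometry claims reduce to the norm identities $\|E_2^*\Lambda D_Th\|=\|D_{T_1}h\|$ and $\|E_1^*\Lambda D_Th\|=\|D_{T_2}h\|$ together with the algebraic relations of Lemma \ref{relations-of-E-lem}, and commutativity hinges on matching the $(2,1)$ entries by recombining the two $U^*$-terms into $U^*U\Lambda D_Th=\Lambda D_Th$ via the defining relation (\ref{TheUnitary}) of $U$ --- exactly the chain of equalities in the paper's displayed computation. The only (harmless) difference is that you verify explicitly that each $M_{\varphi_i}$ is inner, a point the paper leaves implicit in its reduction of the isometry condition to two block equations.
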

\begin{proof}
All we need to show is that $(V_1,V_2)$ is a commuting pair of isometries. We first show that the $(2,1)$-entry in the matrix representation of both $V_1V_2$ and $V_2V_1$ is $\Lambda D_T$, i.e.,
\begin{eqnarray}\label{proof-1}
E_2^*\Lambda D_TT_2+M_{E_1+zE_2^*}E_1^*\Lambda D_T=E_1^*\Lambda D_TT_1+M_{E_2+zE_1^*}E_2^*\Lambda D_T=\Lambda D_T.
\end{eqnarray}
In the following computation for all $h\in \mathcal H$ we use $E_1E_2=0$:
\begin{eqnarray*}
(E_2^*\Lambda D_TT_2+M_{E_1+zE_2^*}E_1^*\Lambda D_T)h&=&E_2^*\Lambda D_TT_2h+E_1E_1^*\Lambda D_Th\\
&=&PU\Lambda D_TT_2h+P^\perp\Lambda D_Th\\
&=&\Lambda D_Th\\
&=&U^*U(D_{T_1}T_2h\oplus D_{T_2}h)\\
&=&U^*(D_{T_1}h\oplus 0)+U^*(0\oplus D_{T_2}T_1h)\\
&=&U^*PU\Lambda D_Th+U^*P^\perp\Lambda D_TT_1h\\
&=&E_2E_2^*\Lambda D_Th+E_1^*\Lambda D_TT_1h\\
&=&(E_1^*\Lambda D_TT_1+M_{E_2+zE_1^*}E_2^*\Lambda D_T)h.
\end{eqnarray*}
Now Lemma \ref{relations-of-E-lem} shows that $M_{E_1+zE_2^*}M_{E_2+zE_1^*}=M_{E_2+zE_1^*}M_{E_1+zE_2^*}=M_z$. This seals the commutativity part.

It remains to show that $V_1$ and $V_2$ are isometries.
A simple matrix computation shows that $V_1$ would be an isometry if and only if the following equalities hold:
\begin{eqnarray}\label{proof-2}
T_1^*T_1+D_T\Lambda^*E_2E_2^*\Lambda D_T=I_{\mathcal H} \text{ and } D_T\Lambda^*E_2M_{E_1+zE_2^*}=0.
\end{eqnarray}
The first equality is true because for every $h,h'\in \mathcal H$,
\begin{eqnarray*}
\langle E_2^*\Lambda D_Th,E_2^*\Lambda D_Th' \rangle=\langle D_{T_1}h\oplus 0,D_{T_1}h'\oplus 0\rangle =\langle D_{T_1}^2h,h' \rangle,
\end{eqnarray*}and the second equality is true because for every $h\in \mathcal H$, $\zeta\in\mathcal F$, $n\geq 0$,
\begin{eqnarray*}
\langle D_T\Lambda^*E_2M_{E_1+zE_2^*} (z^n\otimes \zeta),h \rangle_{\mathcal H}=\langle z^{n+1}\otimes E_2E_2^*(\zeta), \Lambda D_Th \rangle_{H^2\otimes \mathcal F}=0.
\end{eqnarray*} This shows that $V_1$ is an isometry. Similarly $V_2$ would be an isometry if and only if the following equalities hold true:
\begin{eqnarray}\label{proof-3}
T_2^*T_2+D_T\Lambda^*E_1E_1^*\Lambda D_T=I_{\mathcal H} \text{ and } D_T\Lambda^*E_1M_{E_2+zE_1^*}=0.
\end{eqnarray}
Note that for every $h,h'\in \mathcal H$, we have
\begin{eqnarray*}
\langle E_1^*\Lambda D_Th,E_1^*\Lambda D_Th' \rangle=\langle 0\oplus D_{T_2}h,0\oplus D_{T_2}h'\rangle =\langle D_{T_2}^2h,h' \rangle,
\end{eqnarray*} and for every $\zeta\in\mathcal F$, $n\geq 0$, we have
\begin{eqnarray*}
\langle D_T\Lambda^*E_1M_{E_2+zE_1^*} (z^n\otimes\zeta),h \rangle=\langle z^{n+1}\otimes E_1E_1^*(\zeta), \Lambda D_Th \rangle_{H^2\otimes \mathcal F}=0,
\end{eqnarray*}proving that $V_2$ is an isometry too. This completes the proof..
\end{proof}
The proof of Theorem \ref{a-dilation} shows that if we denote the product $V_1V_2$ by $V$, then
\begin{eqnarray}\label{theV}
V=\left(
     \begin{array}{cc}
      T & 0 \\
      \Lambda D_T & M_{z} \\
     \end{array}
   \right).
   \end{eqnarray}
Note that if $(V_1,V_2)$ is an And\^o dilation of $(T_1,T_2)$, then the product $V=V_1V_2$ is an isometric dilation of the product $T=T_1T_2$. How is the dilation $V=V_1V_2$ related to the Sch\"affer's minimal isometric dilation $V_S$ of $T=T_1T_2$? The theorem below answers this question.
\begin{theorem}
Let $(T_1,T_2)$ be a pair of commuting contractions and $(V_1,V_2)$ be the And\^o dilation of $(T_1,T_2)$ constructed in Theorem \ref{a-dilation}. Then there exists an isometry $\Pi_\Lambda:\mathcal H\oplus H^2(\mathcal D_T)\to\mathcal H\oplus H^2(\mathcal F)$ such that
$$\Pi_\Lambda^*V_1V_2\Pi_\Lambda=V_S,$$where $T=T_1T_2$ and $V_S$ is the minimal isometric dilation of $T$ as in (\ref{Schef}).
\end{theorem}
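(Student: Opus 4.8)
The plan is to take for $\Pi_\Lambda$ precisely the isometry already introduced in (\ref{Pi-Lambda}), namely $\Pi_\Lambda=I_{\mathcal H}\oplus(I_{H^2}\otimes\Lambda)$, and then simply compute the compression $\Pi_\Lambda^*V_1V_2\Pi_\Lambda$ against the explicit block form of the product recorded in (\ref{theV}). First I would check that $\Pi_\Lambda$ is an isometry: since $\Lambda:\mathcal D_T\to\mathcal F$ is an isometry (established in (\ref{V})), the ampliation $I_{H^2}\otimes\Lambda$ is an isometry from $H^2(\mathcal D_T)=H^2\otimes\mathcal D_T$ into $H^2(\mathcal F)=H^2\otimes\mathcal F$, whence $\Pi_\Lambda$ is an isometry as an orthogonal direct sum of isometries.

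Next, using (\ref{theV}) I would write $V_1V_2$ as the $2\times 2$ block matrix with entries $T$, $0$, $\Lambda D_T$, $M_z$ on $\mathcal H\oplus H^2(\mathcal F)$, and carry out the block multiplication $\Pi_\Lambda^*V_1V_2\Pi_\Lambda$. The $(1,1)$ entry is unaffected by $\Pi_\Lambda$ and equals $T$, while the $(1,2)$ entry stays $0$. The crucial point is the $(2,2)$ entry: I would use that $M_z$ on $H^2(\mathcal F)$ is $M_z\otimes I_{\mathcal F}$ and therefore intertwines the two ampliations, $M_z(I_{H^2}\otimes\Lambda)=(I_{H^2}\otimes\Lambda)M_z$, so that $(I_{H^2}\otimes\Lambda^*)M_z(I_{H^2}\otimes\Lambda)=(I_{H^2}\otimes\Lambda^*\Lambda)M_z=M_z$ on $H^2(\mathcal D_T)$, the last equality because $\Lambda^*\Lambda=I_{\mathcal D_T}$.

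Finally, for the $(2,1)$ entry I would apply $I_{H^2}\otimes\Lambda^*$ to the operator $\Lambda D_T$. By the constant-function convention fixed just after (\ref{Schef}), the operator $\Lambda D_T$ sends $h\in\mathcal H$ to the constant function $z\mapsto\Lambda D_Th$; applying $I_{H^2}\otimes\Lambda^*$ turns this into the constant function $z\mapsto\Lambda^*\Lambda D_Th=D_Th$, which is exactly the $(2,1)$ entry $D_T$ of the Sch\"affer dilation in (\ref{Schef}). Assembling the four entries gives $\Pi_\Lambda^*V_1V_2\Pi_\Lambda=V_S$, as claimed.

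The computation is entirely routine once (\ref{theV}) is in hand; the only points requiring care are bookkeeping, namely the identification $H^2(\mathcal F)\cong H^2\otimes\mathcal F$ needed to see the intertwining of $M_z$ with $I_{H^2}\otimes\Lambda$, and the constant-function interpretation of the $(2,1)$ entries that makes $\Lambda^*\Lambda=I_{\mathcal D_T}$ collapse $\Lambda D_T$ to $D_T$. There is no genuine obstacle: the real content is that $V_1V_2$ was already identified in (\ref{theV}) with a Sch\"affer-type operator whose defect entry is $\Lambda D_T$ rather than $D_T$, and $\Pi_\Lambda$ is precisely the isometry that absorbs the extra factor $\Lambda$.
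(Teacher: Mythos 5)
Your proposal is correct and follows exactly the route the paper takes: it defines $\Pi_\Lambda=I_{\mathcal H}\oplus(I_{H^2}\otimes\Lambda)$ and verifies the identity against the block form of $V_1V_2$ recorded in (\ref{theV}); the paper simply leaves the entrywise computation as ``easy to see,'' whereas you have written it out (including the intertwining $M_z(I_{H^2}\otimes\Lambda)=(I_{H^2}\otimes\Lambda)M_z$ and the collapse $\Lambda^*\Lambda D_T=D_T$ under the constant-function convention). No gaps.
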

\begin{proof}
With the isometry $\Lambda$ as in the And\^o tuple for $(T_1,T_2)$, define $\Pi_\Lambda$ by
\begin{eqnarray}\label{thePi}
\Pi_\Lambda:=I_{\mathcal H} \oplus (I_{H^2}\otimes \Lambda):\mathcal H\oplus H^2(\mathcal D_T)\to\mathcal H\oplus H^2(\mathcal F).
\end{eqnarray}Now it is easy to see by the matrix representation (\ref{theV}) of the product $V_1V_2$ that the isometry $\Pi_\Lambda$ has the desired property.
\end{proof}
\begin{proof}[{\bf{Proof of Theorem \ref{themainresult}}}]
({\bf{P}}) $\Rightarrow$ ({\bf{A}}): Let $(T_1,T_2)$ be a pair of commuting contractions on a Hilbert space $\mathcal H$ and $T=T_1T_2$. Then note that the And\^o dilation $(V_1,V_2)$ constructed in Theorem \ref{a-dilation} has all the properties described in part ({\bf{A}}).

({\bf{A}})$\Rightarrow$({\bf{S}}): Suppose there exist a Hilbert space $\mathcal F,$ an isometry $\Lambda:\mathcal D_T\to \mathcal F$ and a commuting pair of isometries $(V_1,V_2)$ with the structure as described in part ({\bf{A}}). Let us denote $P^\perp U$ and $U^*P$ by $E_1$ and $E_2$, respectively. Define two bounded operators $F_1$ and $F_2$ on $\mathcal D_T$ by
\begin{eqnarray}\label{thefund}
F_i:=\Lambda^*E_i\Lambda, \text{ for } i=1,2.
\end{eqnarray}
With the isometry  $\Pi_\Lambda=I_{\mathcal H}\oplus(I_{H^2}\otimes \Lambda)$, define two bounded operators on $\mathcal H\oplus H^2(\mathcal D_T)$ by
$$
S_1:=\Pi_\Lambda^*V_1\Pi_\Lambda=\left(
     \begin{array}{cc}
      T_1 & 0 \\
      F_2^*D_T & M_{F_1+zF_2^*} \\
     \end{array}
   \right)\text{ and }S_2:=\Pi_\Lambda^*V_2\Pi_\Lambda=\left(
     \begin{array}{cc}
      T_2 & 0 \\
      F_1^*D_T & M_{F_2+zF_1^*} \\
     \end{array}
   \right).
$$Then note that $S_1$ and $S_2$ are contractions and have all the properties described in part ({\bf{S}}). Note that the pair $(S_1,S_2)$ need not be commuting, in general.

({\bf{S}})$\Rightarrow$({\bf{P}}): This implication is obvious.
\end{proof}
We end this section with the following remark on operators defined in (\ref{thefund}).
\begin{remark}\label{uni-fund}For a commuting pair of contractions $(T_1,T_2)$, we are going to see in \S \ref{uniquenessSec} (Theorem \ref{fundeqnsThm}), that the contraction operators $F_1,F_2$ on $\mathcal D_T$ as defined in (\ref{thefund}) are \textit{uniquely determined} by the triple $(T_1,T_2,T_1T_2)$.
\end{remark}

\section{The Douglas model for And\^o dilation-Proof of Theorem \ref{the2ndmainresult}}\label{Douglas}
In this section we do the second construction of And\^o dilation. It is well-known that an arbitrary family of commuting isometries can always be extended to a family of commuting unitaries. The following result shows that when the family is finite and one of the isometry in the family is the product of the rest of the isometries, then the family can be extended to family of commuting unitaries with additional structure.
\begin{lemma}\label{special-ext}
Let $\underline{V}=(V_1,V_2,\dots, V_n,V)$ be a commuting tuple of isometries on a Hilbert space $\mathcal{H}$ such that $V=V_1V_2\cdots V_n$, then $\underline{V}$ has a unitary extension $\underline{Y}=(Y_1,Y_2,\dots, Y_n,Y)$ such that $Y=Y_1Y_2\cdots Y_n$ is the minimal unitary extension of $V$.
\end{lemma}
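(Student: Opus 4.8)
The plan is to first build the minimal unitary extension $Y$ of the \emph{single} isometry $V=V_1V_2\cdots V_n$ on a space $\mathcal{K}\supseteq\mathcal{H}$, and then to manufacture the commuting unitaries $Y_1,\dots,Y_n$ directly on this \emph{same} space $\mathcal{K}$, so that the required minimality of $Y$ holds by construction. Since $\mathcal{H}$ is invariant under $Y$ (because $Y|_{\mathcal{H}}=V$ maps $\mathcal{H}$ into itself), the minimal unitary extension space is $\mathcal{K}=\bigvee_{m\geq 0}Y^{*m}\mathcal{H}$, and the vectors $Y^{*m}h$ with $m\geq 0$ and $h\in\mathcal{H}$ span a dense subspace on which all computations can be carried out.

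The heart of the argument is to define, for each $i$, a map $Y_i$ on this dense subspace by $Y_i(Y^{*m}h):=Y^{*m}V_ih$. First I would check it is isometric, which simultaneously shows it is well defined: for $m'\geq m$ one has $\langle Y^{*m}h,Y^{*m'}h'\rangle=\langle V^{m'-m}h,h'\rangle$ because $Y^{m'-m}h=V^{m'-m}h\in\mathcal{H}$, and likewise $\langle Y_iY^{*m}h,Y_iY^{*m'}h'\rangle=\langle V_i^*V^{m'-m}V_ih,h'\rangle$. The commutativity of the original tuple gives $V_i^*V^{m'-m}V_i=V_i^*V_iV^{m'-m}=V^{m'-m}$, so the two inner products coincide; hence $Y_i$ extends to an isometry of $\mathcal{K}$ with $Y_i|_{\mathcal{H}}=V_i$, and therefore $\underline{Y}$ extends $\underline{V}$.

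Next I would verify the algebraic relations on the dense subspace, where they reduce to the corresponding relations among the $V_j$: commutativity $Y_iY_j=Y_jY_i$ follows from $V_iV_j=V_jV_i$, and the product identity $Y_1\cdots Y_n=Y$ follows from $V_1\cdots V_n=V$ together with the computation $Y^{*m}Vh=Y^{*m}Yh=YY^{*m}h$ (using that $Y$ is unitary, so $Y$ and $Y^*$ commute). Finally, to upgrade each isometry $Y_i$ to a unitary I would exploit the product relation: since $Y_1\cdots Y_n=Y$ is onto, $\mathcal{K}=\mathrm{Ran}\,Y\subseteq\mathrm{Ran}\,Y_1$ forces $Y_1$ to be surjective, hence unitary; then $Y_2\cdots Y_n=Y_1^*Y$ is again unitary, and peeling off one factor at a time shows every $Y_i$ is unitary.

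The only step requiring genuine care is the isometry/well-definedness of $Y_i$, since that is precisely where the commutativity hypothesis is used; everything else is a formal verification on the spanning vectors. I do not expect a real obstacle beyond bookkeeping: one must keep $\mathcal{H}$ embedded in $\mathcal{K}$ as a $Y$-invariant (not reducing) subspace so that the exponents $m'-m$ stay nonnegative and the restriction of $Y$ to $\mathcal{H}$ really is $V$ throughout the inner-product computations.
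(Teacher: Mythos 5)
Your proof is correct, but it follows a genuinely different route from the paper's. The paper proves the lemma (explicitly only for $n=2$, with the general case asserted to be similar) by taking the Wold decomposition $\mathcal{H}=H^2(\mathcal{D}_{V^*})\oplus\mathcal{H}_u$ of the product $V$, observing that both summands reduce each $V_i$, invoking the Berger--Coburn--Lebow structure to write the shift parts as $(M_{P^\perp U+zPU},M_{U^*P+zU^*P^\perp})$, and then extending by replacing $H^2$ with $L^2$ (i.e., $z$ with $e^{it}$). You instead work abstractly on the minimal unitary extension space $\mathcal{K}=\bigvee_{m\ge 0}Y^{*m}\mathcal{H}$ of the single isometry $V$ and define $Y_i(Y^{*m}h)=Y^{*m}V_ih$; your inner-product computation $\langle V_i^*V^{m'-m}V_ih,h'\rangle=\langle V^{m'-m}h,h'\rangle$ correctly delivers well-definedness and isometry in one stroke, the algebraic identities pass to the dense span, and surjectivity of each $Y_i$ is forced by $\mathrm{Ran}\,Y\subseteq\mathrm{Ran}\,Y_i$. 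Your argument is more elementary (no Wold decomposition, no BCL model), handles arbitrary $n$ uniformly rather than by analogy with $n=2$, and is self-contained; what it gives up is the explicit functional form of the extension that the paper's construction produces, though that extra structure is not actually needed where the lemma is applied. One small point worth making explicit in a final write-up: each $Y_i$ also commutes with $Y^*$ (check $Y_iY^{*(m+1)}h=Y^{*(m+1)}V_ih=Y^*Y_iY^{*m}h$), so the full tuple $(Y_1,\dots,Y_n,Y)$ is commuting as the statement requires.
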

\begin{proof}
We use the Berger-Coburn-Lebow model theory for commuting isometries to prove this result. We prove it for the case when $n=2$. The proof for the general case can be done similarly. So let $(V_1,V_2)$ be a pair of commuting isometries on $\mathcal{H}$ and $V=V_1V_2$. By Wold decomposition (\cite{vonN-Wold}, \cite{Wold}), we know that $$\mathcal{H}=H^2(\mathcal{D}_{V^*})\oplus \mathcal{H}_u, \text{ where } \mathcal{H}_u=\bigcap_{n=0}^\infty V^n\mathcal{H},$$ and with respect to this decomposition, $V=M_z\oplus W$, where $M_z$ is the forward shift on $H^2(\mathcal{D}_{V^*})$ and $W=V|_{\mathcal{H}_u}$ is unitary. It can be checked that the spaces $H^2(\mathcal{D}_{V^*})$ and $\mathcal{H}_u$ are reducing for both $V_1$ and $V_2$ and hence by commutativity $V_i=M_{\varphi_i}\oplus W_i$, where for each $i=1,2$, $\varphi_i\in H^\infty(\mathcal{D}_{V^*})$ and $(W_1,W_2)=(V_1,V_2)|_{\mathcal{H}_u}$ is a pair of commuting unitaries such that $W_1W_2=W$. Since $V_1=V_2^*V$, considering the power series expansion of $\varphi_1$ and $\varphi_2$, one easily concludes that $\varphi_1(z)=F_1+zF_2^*$ and $\varphi_2(z)=F_2+zF_1^*$ for some $F_1,F_2$ in $\mathcal{B}(\mathcal{D}_{V^*})$. Now because $(V_1,V_2)$ are commuting pair of isometries, so are $(M_{\varphi_1},M_{\varphi_2})$ and since $M_{\varphi_1}M_{\varphi_2}=M_z$, by Lemma \ref{relations-of-E-lem} we obtain
$$
(V_1,V_2,V_1V_2)=(M_{P^\perp U+zPU},M_{U^*P+zU^*P^\perp},M_z)
$$for some projection $P$ and unitary $U$ in $\mathcal{D}_{V^*}$.
Now define the following two operators on $L^2(\mathcal{D}_{V^*})\oplus\mathcal{H}_u$:
  $$
  (Y_1,Y_2):=(M_{P^\perp U+e^{it}PU}\oplus W_1,M_{U^*P+e^{it}U^*P^\perp}\oplus W_2).
  $$Clearly $\underline{Y}=(Y_1,Y_2)$ is a unitary extension of $\underline{V}=(V_1,V_2)$. Moreover, $Y=Y_1Y_2=M_{e^{it}}\oplus W$ on $L^2(\mathcal{D}_{V^*})\oplus\mathcal{H}_u$ is clearly the minimal unitary extension of $V=V_1V_2=M_z\oplus W$ on $H^2(\mathcal{D}_{V^*})\oplus\mathcal{H}_u$.
\end{proof} The following well-known result by Douglas is omnipresent in operator theory.
\begin{lem}[Douglas Lemma, \cite{Douglas}]
Let $A$ and $B$ be two bounded operators on a Hilbert space $\mathcal{H}$. Then there exists a contraction $C$ such that $A=BC$ if and only if $$AA^*\leq BB^*.$$
\end{lem}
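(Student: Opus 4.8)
The plan is to prove the two directions separately, with the forward implication being essentially immediate and the converse carrying all the weight.

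For the easy direction, suppose $A=BC$ for some contraction $C$. Then $\|C\|\le 1$ forces $CC^*\le I_{\mathcal{H}}$, and conjugating this inequality by $B$ gives
$$
AA^*=BCC^*B^*\le BB^*,
$$
using that $X\le Y$ implies $BXB^*\le BYB^*$ for positive operators $X,Y$. This settles the ``only if'' part.

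The substance is the converse: assuming $AA^*\le BB^*$, I would construct the contraction by first producing its adjoint. The idea is to define a map $D$ on $\operatorname{Ran}(B^*)$ by the rule $D(B^*x):=A^*x$ and to check directly from the hypothesis that this is a well-defined contraction. Indeed, for every $x\in\mathcal{H}$,
$$
\|A^*x\|^2=\langle AA^*x,x\rangle\le\langle BB^*x,x\rangle=\|B^*x\|^2,
$$
which simultaneously shows that $D$ is well-defined (if $B^*x=B^*y$ then $B^*(x-y)=0$, and the inequality forces $A^*(x-y)=0$, so $A^*x=A^*y$) and that $\|D(B^*x)\|\le\|B^*x\|$, i.e. $D$ is contractive on $\operatorname{Ran}(B^*)$.

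I would then extend $D$ by continuity to the closure $\overline{\operatorname{Ran}(B^*)}$ and set $D$ equal to $0$ on the orthogonal complement $(\overline{\operatorname{Ran}(B^*)})^\perp=\ker(B)$, obtaining a contraction on all of $\mathcal{H}$ that satisfies $DB^*=A^*$ by construction. Taking adjoints yields $BD^*=A$, so $C:=D^*$ is the desired contraction with $\|C\|\le 1$. The only delicate point --- and the step I expect to be the main obstacle to phrase cleanly --- is the well-definedness of $D$ together with its contractivity; but as displayed above this is exactly what the operator inequality $AA^*\le BB^*$ encodes, so no genuine difficulty remains beyond the bookkeeping of passing to closures and the identification $(\operatorname{Ran}(B^*))^\perp=\ker(B)$.
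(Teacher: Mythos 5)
Your proof is correct and is the standard argument for Douglas's lemma (define $D$ on $\operatorname{Ran}(B^*)$ by $DB^*x=A^*x$, verify well-definedness and contractivity from the operator inequality, extend by continuity and by zero on $\ker B$, then take adjoints); the paper itself states this lemma without proof, simply referring the reader to \cite{Douglas}, so there is nothing to compare against beyond noting that your construction is the one in Douglas's original paper. All the delicate points you flag (well-definedness, passage to the closure, the identification $(\operatorname{Ran}B^*)^\perp=\ker B$) are handled correctly.
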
See the paper \cite{Douglas} for a general version of the above lemma. Recall from the Introduction that for a contraction $T$, the operator $Q^2$ is the limit of $T^nT^{* n}$ in the strong operator topology, $X^*:\overline{Ran Q}\to\overline{Ran Q}$ is the isometry such that
\begin{eqnarray}\label{theX}
X^*Q=QT^*,
\end{eqnarray} and $W^*$ on $\mathcal{R}\supseteq \overline{Ran Q}$ is the minimal unitary extension of $X^*$. The isometry $M_z\oplus W$ on $H^2(\mathcal D_{T^*})\oplus \mathcal{R}$, which we denoted by $V_{D}$, is a minimal isometric dilation of $T$ and the isometry $\Pi_{D}:\mathcal{H}\to H^2(\mathcal D_{T^*})\oplus \mathcal{R}$ defined by
\begin{eqnarray}\label{Pi_NF}
\Pi_{D}( h)= \mathcal O (h)\oplus Q(h),
\end{eqnarray}
has the following intertwining property
\begin{eqnarray}\label{The NF}
\Pi_{D}T^*=(M_z\oplus W)^*\Pi_{D},
\end{eqnarray}where $\mathcal{O}$ is the observability operator defined in (\ref{the-iso}).

Let us now take the contraction $T$ to be the product of two commuting contractions $T_1$ and $T_2$. In this case, as we are going to see, many more interesting facts hold. For first example, let $h\in\mathcal{H}$, then
\begin{eqnarray*}
  \langle T_1Q^2T_1^*h,h\rangle = \lim\langle T^n(T_1T_1^*){T^*}^nh,h \rangle \leq \lim\langle T^n{T^*}^nh,h\rangle = \langle Q^2h,h\rangle
\end{eqnarray*}which, by Douglas Lemma, implies that there exists a contraction $X_1^*$ such that $X_1^*Q=QT_1^*$. A similar treatment with the other contraction $T_2$ would give us another contraction $X_2^*$ such that $X_2^*Q=QT_2^*$. Note that
$$X_1^*X_2^*=X^*,$$where $X^*$ is as in (\ref{theX}). It is clear that $X_1$ and $X_2$ commute. Since $X^*$ is an isometry, both $X_1^*$ and $X_2^*$ are isometries. Because, in general, if $T$ is an isometry such that $T=T_1T_2$ for some commuting contractions $T_1$ and $T_2$, then both of $T_1$ and $T_2$ are isometries. It follows from the following norm equalities which we have seen in \S \ref{Schaffer}:
$$
\|D_{T_1}T_2h\|^2+\|D_{T_2}h\|^2=\|D_Th\|^2=\|D_{T_1}h\|^2+\|D_{T_2}T_1h\|^2 \text{ for all }h \in \mathcal{H}.
$$
Also, note that the same is true if the word `isometry' is replaced by `unitary' because the above equalities hold for every contraction, in particular, for $T_1^*$ and $T_2^*$ also.

By Lemma \ref{special-ext}, we have a commuting unitary extension of $(X_1^*,X_2^*)$, $(W_1^*,W_2^*)$ say, on the same space $\mathcal{R}\supseteq \overline{RanQ}$, where $W^*$, the minimal unitary extension of $X^*$ acts and moreover $W=W_1W_2$.

Let $(\mathcal{F}_*,\Gamma, P',U')$ be the And\^o tuple for $(T_1^*,T_2^*)$. Recall that this means
\begin{eqnarray}\label{the aux space*}
\mathcal{F}_*=\mathcal{D}_{T_1^*}\oplus \mathcal{D}_{T_1^*}\text{ or }\mathcal{D}_{T_1^*}\oplus \mathcal{D}_{T_1^*}\oplus l^2,
\end{eqnarray}
$\Gamma:\mathcal D_{T^*}\to \mathcal{F}_*$ is the isometry
\begin{eqnarray}\label{the-iso-pure}
\Gamma D_{T^*}h=D_{T^*_1}T_2^*h\oplus D_{T^*_2}h \text{  for all $h\in \mathcal H$},
 \end{eqnarray}$P'$ is the orthogonal projection of $\mathcal{F}_*$ onto $\mathcal{D}_{T^*}$ and $U':\mathcal{F}_*\to \mathcal{F}_*$ is a unitary that has the following property
\begin{eqnarray}\label{the-uni-pure}
U'\big(D_{T^*_1}T^*_2h\oplus D_{T^*_2}h\big)=D_{T^*_1}h\oplus D_{T^*_2}T^*_1h \text{  for all $h\in \mathcal H$}.
\end{eqnarray} Now define operators $H_1,H_2$ on $\mathcal{F}_*$ by
\begin{eqnarray}\label{the-demi-fund}
(H_1,H_2):=(P'^\perp U',U'^*P')
\end{eqnarray}We shall prove that the pair $(V^D_1,V^D_2)$ on the space $H^2(\mathcal{F}_*)\oplus \mathcal R$ defined by
\begin{eqnarray}\label{TheFS}
V^D_1:=\left(
     \begin{array}{cc}
      M_{H_1^*+zH_2} & 0 \\
      0 & W_1 \\
     \end{array}
   \right)\text{ and }V^D_2:=\left(
     \begin{array}{cc}
      M_{H_2^*+zH_1} & 0 \\
      0 & W_2 \\
     \end{array}
   \right),
\end{eqnarray} is an And\^o dilation for $(T_1,T_2)$.
We need the following result before we can prove that.
\begin{lemma}\label{tet-fund-rels}
For a pair $(T_1,T_2)$ of commuting contractions on a Hilbert space $\mathcal{H}$ and $T=T_1T_2$, we have
$$
\Gamma D_{T^*}T_1^*=H_1\Gamma D_{T^*}+H_2^*\Gamma D_{T^*}T^* \text{ and }
\Gamma D_{T^*}T_2^*=H_2\Gamma D_{T^*}+H_1^*\Gamma D_{T^*}T^*,
$$where the isometry $\Gamma$ and the contractions $(H_1,H_2)$ are as defined in (\ref{the-iso-pure}) and (\ref{the-demi-fund}), respectively.
\end{lemma}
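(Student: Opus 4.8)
The plan is to verify each of the two identities by evaluating both sides on a generic vector $D_{T^*}h$, $h\in\mathcal{H}$, and comparing coordinates in $\mathcal{D}_{T_1^*}\oplus\mathcal{D}_{T_2^*}\subseteq\mathcal{F}_*$. First I would record the two basic actions that are the Douglas-side analogues of (\ref{thefundon-E}): using the defining relation (\ref{the-uni-pure}) for $U'$, the fact that $P'$ is the projection onto the first summand $\mathcal{D}_{T_1^*}$, and the definition (\ref{the-demi-fund}) of $(H_1,H_2)$, one reads off directly
$$
H_1\Gamma D_{T^*}h = 0\oplus D_{T_2^*}T_1^*h \quad\text{and}\quad H_2^*\Gamma D_{T^*}h = D_{T_1^*}h\oplus 0 .
$$
These two formulas follow immediately from the explicit form (\ref{the-iso-pure}) of $\Gamma D_{T^*}$ and cost nothing.

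For the first identity I would simply substitute these into the right-hand side: the $H_1$ term contributes $0\oplus D_{T_2^*}T_1^*h$, while the $H_2^*$ term, applied to $T^*h$, contributes $D_{T_1^*}T^*h\oplus 0$. Since $T^*=T_1^*T_2^*=T_2^*T_1^*$ by commutativity, the sum becomes $D_{T_1^*}T_2^*T_1^*h\oplus D_{T_2^*}T_1^*h$, which is exactly $\Gamma D_{T^*}(T_1^*h)$ by (\ref{the-iso-pure}). Thus the first identity is purely a matter of plugging in and matching coordinates.

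The second identity is where the real work lies, and I expect it to be the main obstacle: the operators that now appear, $H_2=U'^*P'$ and $H_1^*=U'^*P'^\perp$, both involve $U'^*$, whose action is not handed to us by (\ref{the-uni-pure}). To get around this I would reuse the device from the proof of Theorem \ref{a-dilation}, namely to express each one-coordinate vector as the image under $P'U'$ or $P'^\perp U'$ of $\Gamma D_{T^*}$ evaluated at a cleverly chosen argument. Concretely, $H_2\Gamma D_{T^*}h=U'^*(D_{T_1^*}T_2^*h\oplus 0)$, and recognizing $D_{T_1^*}T_2^*h\oplus 0=P'U'\,\Gamma D_{T^*}(T_2^*h)$ converts this term into $U'^*P'U'\,\Gamma D_{T^*}(T_2^*h)=H_2H_2^*\,\Gamma D_{T^*}T_2^*h$. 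Similarly $H_1^*\Gamma D_{T^*}(T^*h)=U'^*(0\oplus D_{T_2^*}T^*h)$, and using commutativity to rewrite $D_{T_2^*}T^*h=D_{T_2^*}T_1^*(T_2^*h)$ lets me identify $0\oplus D_{T_2^*}T^*h=P'^\perp U'\,\Gamma D_{T^*}(T_2^*h)$, so this term equals $H_1^*H_1\,\Gamma D_{T^*}T_2^*h$.

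Adding the two contributions, the right-hand side of the second identity collapses to $(H_2H_2^*+H_1^*H_1)\,\Gamma D_{T^*}T_2^*h$. At this point I would invoke Lemma \ref{relations-of-E-lem} applied to the pair $(H_1,H_2)=(P'^\perp U',U'^*P')$, which gives $H_1^*H_1+H_2H_2^*=I_{\mathcal{F}_*}$; this removes the bracket and leaves exactly $\Gamma D_{T^*}T_2^*h$, the left-hand side. The only delicate step is the translation of the two $U'^*$-actions into $H_iH_j^*$ products via the substitution trick, after which Lemma \ref{relations-of-E-lem} finishes the argument.
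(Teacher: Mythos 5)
Your proposal is correct and follows essentially the same route as the paper: evaluate both sides on $D_{T^*}h$ and use the explicit formulas (\ref{the-iso-pure}), (\ref{the-uni-pure}) for $\Gamma$ and $U'$ (the paper writes out only the first identity and leaves the second as ``similar''). Your treatment of the second identity is correct but more elaborate than needed --- the two terms sum to $U'^*\bigl(D_{T_1^*}T_2^*h\oplus D_{T_2^*}T^*h\bigr)$, and applying (\ref{the-uni-pure}) with $h$ replaced by $T_2^*h$ identifies the vector in parentheses as $U'\Gamma D_{T^*}T_2^*h$, so the detour through $H_2H_2^*+H_1^*H_1=I$ and Lemma \ref{relations-of-E-lem} can be skipped.
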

\begin{proof}
We only establish one of the equalities and leave the other as it can be proved similarly. For all $h\in \mathcal H$,
\begin{eqnarray*}
  H_1\Gamma D_{T^*}h+H_2^*\Gamma D_{T^*}T^*h  &=& P'^\perp U'({D_{T_1^*}}T_2^*h\oplus {D_{T_2^*}}h)+P'U'({D_{T_1^*}}T_2^*T^*h\oplus {D_{T_2^*}}T^*h)\\
  &=&(0\oplus D_{T_2^*}T_1^*h)+(D_{T_1^*}T^*h\oplus 0)\\
  &=&D_{T_1^*}T_2^*T_1^*h\oplus D_{T_2^*}T_1^*h=\Gamma D_{T^*}T_1^*h.
\end{eqnarray*}
\end{proof}
\begin{theorem}\label{thedemi2ndmain}
Let $(T_1,T_2)$ be a pair of commuting contractions on a Hilbert space $\mathcal{H}$. Then the pair $(V^D_1,V^D_2)$ on $H^2(\mathcal{F}_*)\oplus \mathcal{R}$ as defined in (\ref{TheFS}) is an isometric dilation of $(T_1,T_2)$.
\end{theorem}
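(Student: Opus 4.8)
The plan is to verify the two defining properties of an isometric dilation from the definition of a dilation: that $(V^D_1,V^D_2)$ is a commuting pair of isometries, and that there is an isometry $\tilde\Pi:\mathcal{H}\to H^2(\mathcal{F}_*)\oplus\mathcal{R}$ co-extending $(T_1,T_2)$, i.e.\ $\tilde\Pi T_i^*=V_i^{D*}\tilde\Pi$ for $i=1,2$. Since each $V^D_i$ in (\ref{TheFS}) is block diagonal, it suffices to treat the multiplier block and the unitary block separately. The unitaries $W_1,W_2$ commute by the construction from Lemma \ref{special-ext}. For the multiplier blocks I first record that $(H_1,H_2)=(P'^\perp U',U'^*P')$ in (\ref{the-demi-fund}) is exactly of the form covered by Lemma \ref{relations-of-E-lem}, so its trivial forward direction gives $H_1H_2=H_2H_1=0$ and $H_1H_1^*+H_2^*H_2=H_1^*H_1+H_2H_2^*=I_{\mathcal{F}_*}$.

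From these relations I would show that the symbols $\varphi(z)=H_1^*+zH_2$ and $\psi(z)=H_2^*+zH_1$ are inner. Writing a linear symbol as $A+zB$, a short Taylor-coefficient computation shows $M_{A+zB}$ is an isometry precisely when $A^*A+B^*B=I$ and $B^*A=0$; for $\varphi$ this reads $H_1H_1^*+H_2^*H_2=I$ and $H_2^*H_1^*=0$, and for $\psi$ it reads $H_2H_2^*+H_1^*H_1=I$ and $H_1^*H_2^*=0$, all of which are the relations just listed. Expanding the products then gives $\varphi(z)\psi(z)=\psi(z)\varphi(z)=zI$, so $M_\varphi M_\psi=M_\psi M_\varphi=M_z$; hence the multiplier blocks commute, and $(V^D_1,V^D_2)$ is a commuting pair of isometries.

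For the co-extension I would take $\tilde\Pi=\big((I_{H^2}\otimes\Gamma)\oplus I_{\mathcal{R}}\big)\Pi_D$, where $\Pi_D$ is the Douglas isometry (\ref{Pi_NF}) and $\Gamma$ is the isometry (\ref{the-iso-pure}); explicitly $\tilde\Pi h=\sum_{n\ge0}z^n\otimes\Gamma D_{T^*}T^{*n}h\oplus Qh$. As a composition of the isometry $\Pi_D$ with $(I_{H^2}\otimes\Gamma)\oplus I_{\mathcal{R}}$ (an isometry because $\Gamma$ is), $\tilde\Pi$ is an isometry. The intertwining splits along the direct sum. On the $\mathcal{R}$-component it reduces to $W_i^*Q=QT_i^*$, which holds because $W_i^*$ was chosen to extend the isometry $X_i^*$ satisfying $X_i^*Q=QT_i^*$ and because $Qh\in\overline{\operatorname{Ran}Q}$.

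The substantive step is the $H^2(\mathcal{F}_*)$-component, $M_\varphi^*(I_{H^2}\otimes\Gamma)\mathcal{O}h=(I_{H^2}\otimes\Gamma)\mathcal{O}T_1^*h$ (and the analogue with $\psi$ and $T_2$). Here I would use that the adjoint of a linear multiplier is the degree-lowering map $M_{A+zB}^*(z^n\otimes\xi)=z^n\otimes A^*\xi+z^{n-1}\otimes B^*\xi$, which for $\varphi$ combines two consecutive Taylor coefficients of $(I_{H^2}\otimes\Gamma)\mathcal{O}h$: the coefficient of $z^m$ becomes $H_1\Gamma D_{T^*}T^{*m}h+H_2^*\Gamma D_{T^*}T^{*(m+1)}h$. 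By Lemma \ref{tet-fund-rels} applied to $T^{*m}h$ this equals $\Gamma D_{T^*}T_1^*T^{*m}h$, and since $T=T_1T_2$ forces $T_1^*T^{*m}=T^{*m}T_1^*$, this is exactly the coefficient of $z^m$ in $(I_{H^2}\otimes\Gamma)\mathcal{O}T_1^*h$. The $T_2$-case is identical using the second identity of Lemma \ref{tet-fund-rels}. I expect this index-bookkeeping---aligning the degree-lowering adjoint multiplier with the fundamental relations of Lemma \ref{tet-fund-rels}---to be the only delicate point; everything else is a direct consequence of the Berger--Coburn--Lebow relations and the Douglas construction.
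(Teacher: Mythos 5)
Your proposal is correct and follows essentially the same route as the paper: the same isometry $\tilde\Pi=\Pi_\Gamma\Pi_D$, the same coefficient-matching computation with the degree-lowering adjoint of the linear multiplier combined with Lemma \ref{tet-fund-rels}, and the same appeal to the Berger--Coburn--Lebow relations for the multiplier blocks. The only difference is cosmetic: you spell out explicitly that the multiplier parts are commuting isometries with product $M_z$, a point the paper disposes of in the discussion preceding the theorem (via Lemma \ref{relations-of-E-lem} and Lemma \ref{special-ext}) rather than inside the proof itself.
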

\begin{proof}
Let $T=T_1T_2$. Define the operator $\tilde \Pi:\mathcal{H}\to H^2(\mathcal{F}_*)\oplus \mathcal{R}$ by  $$
  \tilde \Pi (h):= \Pi_\Gamma\Pi_{D}(h)=\sum_{n=0}^{\infty}z^n\Gamma D_{T^*}T^{* n}h\oplus Qh,
  $$ where for $\Gamma$ as defined in (\ref{the-iso-pure}), $\Pi_\Gamma$ is the isometry defined by
\begin{eqnarray}\label{theGamma}
\Pi_\Gamma:=(I_{H^2}\otimes \Gamma)\oplus I_{\mathcal{R}}
           \end{eqnarray} and $\Pi_{D}$ is the isometry as defined in (\ref{Pi_NF}).
  To complete the proof of the theorem we show that
  $$
  (V^D_1,V^D_2)^*\tilde \Pi=\tilde \Pi(T_1,T_2)^*.
  $$
   Lemma \ref{tet-fund-rels} will now come handy to establish these two equalities. We only establish one and the other equality can be established similarly. For $h\in\mathcal{H}$,
\begin{eqnarray*}
  V_1^{D *}\tilde{\Pi}h &=& (M_{H_1^*+zH_2}^*\oplus W_1^*)\tilde{\Pi}h\\
  &=&\left(\sum_{n=0}^{\infty}z^nH_1\Gamma D_{T^*}T^{* n}h+\sum_{n=1}^{\infty}z^{n-1}H_2^*\Gamma D_{T^*}T^{* n}h\right)\oplus W_1^*Qh\\
  &=& \sum_{n=0}^{\infty}z^n\left(H_1\Gamma D_{T^*}+H_2^*\Gamma D_{T^*}T^*\right)T^{* n}h\oplus X_1^*Qh\\
  &=&\sum_{n=0}^{\infty}z^n\Gamma D_{T^*}T^{* n}T_1^*h\oplus QT_1^*h=\tilde{\Pi}T_1^*h.
\end{eqnarray*}
This completes the proof.
\end{proof}
\begin{remark}
Note that the operators $U_1,U_2$ on $L^2(\mathcal{F}_*)\oplus\mathcal{R}$ defined by
$$
U_1:=\left(
     \begin{array}{cc}
      M_{H_1^*+e^{it}H_2} & 0 \\
      0 & W_1 \\
     \end{array}
   \right)\text{ and }U_2:=\left(
     \begin{array}{cc}
      M_{H_2^*+e^{it}H_1} & 0 \\
      0 & W_2 \\
     \end{array}
   \right)
   $$ are commuting unitary extensions of $V^D_1$ and $V^D_2$, respectively, where $(V^D_1,V^D_2)$ are as in Theorem \ref{thedemi2ndmain}. Hence $(U_1,U_2)$ is an And\^o unitary dilation of $(T_1,T_2)$.
\end{remark}
\begin{proof}[{\bf{Proof of theorem \ref{the2ndmainresult}}}]
$({\bf P})\Leftrightarrow({\bf A'}):$  Note that $({\bf P})\Rightarrow({\bf A'})$ is the subject of Theorem \ref{thedemi2ndmain} with $\mathcal{M}$ being the range of the isometry $\tilde{\Pi}$. The direction $({\bf A'})\Rightarrow ({\bf P})$ is clear.

$({\bf P})\Leftrightarrow({\bf D}):$ For the part $({\bf P})\Rightarrow({\bf D})$, let $(V^D_1,V^D_2)$ be as in Theorem \ref{thedemi2ndmain} and define $(D_1,D_2,D):=\Pi_\Gamma^*(V^D_1,V^D_2,V^D)\Pi_\Gamma$, where $\Pi_\Gamma=(I_{H^2}\otimes \Gamma) \oplus I_{\mathcal{R}}.$
Note that since $\tilde{\Pi}=\Pi_\Gamma\Pi_{D}$ and $(V^D_1,V^D_2,V^D)^*\tilde \Pi=\tilde \Pi(T_1,T_2,T_1T_2)^*$, we have
$$\Pi_{D}(T_1,T_2,T_1T_2)^*=\Pi_\Gamma^*(V^D_1,V^D_2,V^D)^*\Pi_\Gamma\Pi_{D}=(D_1,D_2,D)^*\Pi_{D}.$$Therefore if we choose $\mathcal{M'}$ to be the range of $\Pi_{D}$, then we have the operators $D_1,D_2,D$ satisfying all the properties described in $({\bf{D}})$. And finally, $({\bf D})\Rightarrow({\bf P})$ is obvious.
\end{proof}

\section{Uniqueness - proof of Theorem \ref{compression-uniquenss}}\label{uniquenessSec}
It is a one-variable phenomenon that any two minimal isometric (or unitary) dilations of a contraction are unitarily equivalent. It is, however, known \cite{Li-Ti} that two minimal And\^o dilations need not be unitarily equivalent. Hence we do not expect that the two And\^o dilations constructed here are unitarily equivalent. However, equations (\ref{minimality}) and (\ref{NF-min}) reflect a certain beauty in the dilation pairs, viz., the products of the dilation pairs, when compressed to the corresponding minimal isometric dilation spaces of the product $T=T_1T_2$, give us back the minimal isometric dilations of $T=T_1T_2$. Surprisingly, this is good enough for the following two triples of contractions to be unitarily equivalent:
\begin{eqnarray*}
(S_1,S_2,V_S)&:=&\Pi_\Lambda^*(V_1,V_2,V_1V_2)\Pi_\Lambda \text{ and}\\
(D_1,D_2,V_D)&:=&\Pi_\Gamma^*(V_1,V_2,V_1V_2)\Pi_\Gamma,
\end{eqnarray*}
where $(V_1,V_2)$ and $(V^D_1,V^D_2)$ are the And\^o dilations as in Theorem \ref{a-dilation} and Theorem \ref{thedemi2ndmain}, respectively and the isometries $\Pi_\Lambda$ and $\Pi_\Gamma$
are as defined in (\ref{thePi}) and (\ref{theGamma}), respectively. We actually prove a stronger version of Theorem \ref{compression-uniquenss}.

For a pair $\underline{T}:=(T_1,T_2)$ of commuting contractions, let
\begin{eqnarray*}
\mathcal{U}_{\underline{T}}&:=&\{(W_1,W_2,W): \text{ $(W_1,W_2,W)$ is a dilation of $(T_1,T_2,T_1T_2)$},\\&& \text{$W$ is the minimal isometric dilation of $T=T_1T_2$,}\\&& \text{$(W_1,W)$, $(W_2,W)$ are commuting and $W_1=W_2^*W.$}\}
\end{eqnarray*}It can be checked easily that the triples $(S_1,S_2,V_S)$ on $\mathcal{H}\oplus H^2(\mathcal{D}_T)$ and $(D_1,D_2,V_D)$ on $H^2(\mathcal{D}_{T^*})\oplus \mathcal{R}$ are in $\mathcal{U}_{\underline{T}}$.
The following is the main result of this section.
\begin{theorem}\label{uniqueness}
  For a pair $\underline{T}:=(T_1,T_2)$ of commuting contractions, the family $\mathcal{U}_{\underline{T}}$ is a singleton set under unitary equivalence.
\end{theorem}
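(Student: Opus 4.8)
The plan is to show that any two triples in $\mathcal{U}_{\underline{T}}$ are unitarily equivalent via a unitary that is the identity on the common copy of $\mathcal{H}$. The key observation driving the whole argument is that a triple $(W_1,W_2,W) \in \mathcal{U}_{\underline{T}}$ is \emph{completely determined} by its product $W$ together with the constraint $W_1 = W_2^* W$. Indeed, once $W$ is fixed as a minimal isometric dilation of $T = T_1T_2$, the Wold-type analysis from Lemma \ref{special-ext} applies: the forward-shift part of $W$ forces $(W_1, W_2)|_{H^2(\mathcal{D}_{T^*})}$ to have the Berger--Coburn--Lebow form $(M_{P^\perp U + zPU}, M_{U^*P+zU^*P^\perp})$ for some projection $P$ and unitary $U$, and the relation $W_1 = W_2^*W$ pins down the symbols up to the data $(P,U)$ acting on $\mathcal{D}_{W^*}$. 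So the heart of the matter is to prove that this data $(P,U)$ — equivalently, the operators $F_1 = W_2^*|\cdot$ read off from the symbols — is an invariant of the triple $(T_1,T_2,T)$ and does not depend on which realization of the minimal dilation $W$ we started from.

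\textbf{Step one.} I would first invoke the one-variable uniqueness theorem (\cite{Nagy-Foias}): since $W$ and $W'$ are both minimal isometric dilations of the same contraction $T$, there is a unitary $\tau : \mathcal{K}_W \to \mathcal{K}_{W'}$ intertwining $W$ with $W'$ and fixing $\mathcal{H}$ (more precisely, satisfying $\tau \Pi = \Pi'$ for the respective embedding isometries). The goal is to upgrade $\tau$ to intertwine the \emph{entire} triples, i.e.\ to show $\tau W_1 = W_1' \tau$ and $\tau W_2 = W_2'\tau$.

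\textbf{Step two.} Since $W_1 = W_2^*W$ and $W_1' = W_2'^*W'$, it suffices to intertwine the second coordinates, and by symmetry of the product (using $W_2 = W_1^* W$ as well) it is enough to control one of them. Here I would extend everything to the minimal \emph{unitary} dilation via Lemma \ref{special-ext}: both triples extend to commuting-unitary triples $(Y_1,Y_2,Y)$ and $(Y_1',Y_2',Y')$ with $Y = Y_1Y_2$, $Y' = Y_1'Y_2'$ the minimal unitary extensions. The uniqueness of the minimal unitary dilation of $T$ gives a unitary $\sigma$ intertwining $Y$ and $Y'$ and fixing $\mathcal{H}$. The structure theorem for commuting unitaries with a product decomposition, together with the co-extension property $(Y_1,Y_2,Y)^*|_{\mathcal{H}} = (T_1,T_2,T)^*$, forces $\sigma Y_2 \sigma^* $ and $Y_2'$ to agree first on $\mathcal{H}$, then on $Y^n \mathcal{H}$ for all $n$ by the intertwining with $Y = Y'$-conjugate, and finally on the closed span $\overline{\operatorname{span}}\{Y^n\mathcal{H}\}$, which is all of the minimal unitary dilation space by minimality. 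Restricting $\sigma$ back down to the isometric dilation spaces yields the required intertwiner $\tau$ for the full triples.

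\textbf{The main obstacle} I expect is \emph{minimality of the unitary dilation as a reducing span rather than merely a forward-invariant span}: the minimal isometric dilation is the one-sided span $\overline{\operatorname{span}}\{W^n \Pi h\}$, whereas the minimal unitary extension is the two-sided span $\overline{\operatorname{span}}\{W^{*m}W^n \Pi h\}$, and I must verify that the agreement of $Y_2$ with $\sigma^* Y_2' \sigma$ propagates across negative powers of $Y$ as well. The clean way around this is to check the intertwining on $\mathcal{H}$ directly from the co-extension relations $Y_i^*|_{\mathcal{H}} = T_i^*$ and $W^*|_{\mathcal{H}} = T^*$ — which give $Y_2^* W^n \Pi h = Y_2^* Y^n \Pi h$ agreeing with the corresponding primed expression because $Y_2$ commutes with $Y$ and acts as a fixed function of the common $Y$-data on the shift part — and then to use that both $Y_2$ and $Y_2'$ commute with their respective (identified) products, so the agreement automatically extends to the two-sided span. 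Once $\tau$ intertwines $W_2$, the identity $W_1 = W_2^* W$ delivers $W_1$ for free, and the triple-level unitary equivalence follows, proving $\mathcal{U}_{\underline{T}}$ is a singleton.
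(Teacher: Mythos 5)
Your overall strategy -- reduce both triples to a common minimal isometric dilation $W$ of $T$ via the one-variable uniqueness theorem, and then argue that the rest of the triple is forced -- is exactly the paper's strategy, and you correctly identify the heart of the matter in your opening paragraph: one must show that the data $(P,U)$ (equivalently, the coefficients $F_1,F_2$ of the symbols) is an invariant of $(T_1,T_2,T)$. But your Step two never actually proves this. The claim that the co-extension property ``forces $\sigma Y_2\sigma^*$ and $Y_2'$ to agree first on $\mathcal{H}$'' is unjustified: the relations $W_i^*|_{\mathcal{H}}=T_i^*$ only determine $P_{\mathcal{H}}W_i|_{\mathcal{H}}=T_i$, i.e.\ the $(1,1)$ block, and say nothing a priori about the component of $W_ih$ orthogonal to $\mathcal{H}$ (the $(2,1)$ entries $X_i$ in the block decomposition over $\mathcal{H}\oplus H^2(\mathcal{D}_T)$). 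Likewise, passing to the minimal unitary extension does not help, since $Y_i^*h$ for $h\in\mathcal{H}$ need not lie in $\mathcal{H}$ and need not equal $T_i^*h$ (only its projection onto $\mathcal{K}_W$ is controlled), so the phrase ``acts as a fixed function of the common $Y$-data on the shift part'' is precisely the assertion in need of proof, not a reason.

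The missing ingredient is the paper's Theorem \ref{fundeqnsThm}. Writing $W_1,W_2$ in block form over $\mathcal{H}\oplus H^2(\mathcal{D}_T)$ with $W=V_S$, commutation with $W$ forces the $(2,2)$ entries to be multiplication operators $M_{\varphi_i}$, and the two relations $W_1=W_2^*W$ and $W_2=W_1^*W$ force $\varphi_1(z)=F_1+zF_2^*$, $\varphi_2(z)=F_2+zF_1^*$, $X_1=F_2^*D_T$, $X_2=F_1^*D_T$, \emph{and} the compressed-to-$\mathcal{H}$ identities
\begin{equation*}
T_1-T_2^*T=D_TF_1D_T,\qquad T_2-T_1^*T=D_TF_2D_T.
\end{equation*}
Since $F_1,F_2$ act on $\mathcal{D}_T=\overline{\operatorname{Ran}}\,D_T$, these equations determine $F_1,F_2$ uniquely from $(T_1,T_2,T)$ alone, and hence determine every entry of $W_1$ and $W_2$. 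That is the step your proposal is missing; without it, the agreement of the two triples on $\mathcal{H}$ (let alone on the two-sided span) cannot be established, and the detour through Lemma \ref{special-ext} and the minimal unitary extension is both unnecessary and insufficient.
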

For a commuting pair $\underline{T}=(T_1,T_2)$ of contractions, let $\underline{W}=(W_1,W_2,W)$ on $\mathcal{K}$ and $\underline{W'}=(W'_1,W_2',W')$ on $\mathcal{K'}$ be in $\mathcal{U}_{\underline{T}}$. Since both of these triples are dilations of $(T_1,T_2,T)$, there exist isometries $\Pi:\mathcal{H}\to\mathcal{K}$ and $\Pi':\mathcal{H}\to\mathcal{K'}$ such that $(T_1,T_2,T_1T_2)\Pi^*=\Pi^*\underline{W}$ and $(T_1,T_2,T_1T_2)\Pi'^*=\Pi'^*\underline{W'}$. Theorem \ref{uniqueness} says that $\underline{W}$ and $\underline{W'}$ are unitarily equivalent, i.e., there exists a unitary $U:\mathcal{K}\to\mathcal{K'}$ such that
$$U\underline{W}=\underline{W'}U \text{ and } U\Pi=\Pi'.$$ In the proof of the above uniqueness theorem we use the following result, which is interesting in its own right, so we state it as a theorem.
\begin{theorem}\label{fundeqnsThm}
  Let $(T_1,T_2)$ be a commuting pair of contractions, $T=T_1T_2$ and $(\mathcal{F},\Lambda, P, U)$ be the And\^o tuple for $(T_1,T_2)$. Define two operators $F_1$, $F_2$ on $\mathcal{D}_T$ by
  \begin{eqnarray}\label{thefunds}
(F_1,F_2):=(\Lambda^*P^\perp U\Lambda,\Lambda^*U^*P\Lambda).
  \end{eqnarray}Then the contractions $F_1$ and $F_2$ satisfy
  \begin{eqnarray}\label{fundeqns}
T_1-T_2^*T=D_TF_1D_T \text{ and }T_2-T_1^*T=D_TF_2D_T.
  \end{eqnarray}
Conversely, any two bounded operators $F_1$, $F_2$ in $\mathcal{B}(\mathcal{D}_T)$ satisfying equations (\ref{fundeqns}) are of the form (\ref{thefunds}), where $(\mathcal{F},\Lambda, P, U)$ is the And\^o tuple for $(T_1,T_2)$.
\end{theorem}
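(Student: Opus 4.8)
The plan is to treat the two directions separately: first I would verify by direct computation that the specific pair $(\Lambda^* P^\perp U \Lambda, \Lambda^* U^* P \Lambda)$ solves the fundamental equations (\ref{fundeqns}), and then derive the converse from the observation that a solution of (\ref{fundeqns}) inside $\mathcal{B}(\mathcal{D}_T)$ is \emph{unique}, precisely because $\mathrm{Ran}\, D_T$ is dense in $\mathcal{D}_T$. So the whole statement reduces to one explicit calculation plus one density argument.

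For the forward direction I would compute $D_T F_1 D_T$ on a vector of the form $D_T h$. Writing $F_1 = \Lambda^* P^\perp U \Lambda = \Lambda^* E_1 \Lambda$ and using the identity $E_1 \Lambda D_T h = 0 \oplus D_{T_2} T_1 h$ already recorded in (\ref{thefundon-E}), the task reduces to evaluating $D_T \Lambda^*(0 \oplus D_{T_2} T_1 h)$. The key device for handling $\Lambda^*$ (which has no convenient closed form) is to pair against an arbitrary $h' \in \mathcal{H}$ and transfer $\Lambda$ back onto the other slot: $\langle D_T \Lambda^*(0 \oplus D_{T_2} T_1 h), h'\rangle = \langle 0 \oplus D_{T_2} T_1 h, \Lambda D_T h'\rangle$, where $\Lambda D_T h' = D_{T_1} T_2 h' \oplus D_{T_2} h'$ by (\ref{V}). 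This collapses to $\langle D_{T_2}^2 T_1 h, h'\rangle$, and since commutativity gives $T_2^* T_2 T_1 = T_2^* T$, I obtain $D_T F_1 D_T = D_{T_2}^2 T_1 = T_1 - T_2^* T$, the first equation. The second is entirely symmetric: with $F_2 = \Lambda^* U^* P \Lambda$ I would use $P \Lambda D_T h = D_{T_1} T_2 h \oplus 0$, push $U^* \Lambda^*$ across the inner product onto $U \Lambda D_T h' = D_{T_1} h' \oplus D_{T_2} T_1 h'$ (which is exactly the defining action (\ref{TheUnitary}) of $U$), and read off $\langle D_{T_1}^2 T_2 h, h'\rangle = \langle (T_2 - T_1^* T) h, h'\rangle$.

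For the converse, suppose $F_1, F_2 \in \mathcal{B}(\mathcal{D}_T)$ satisfy (\ref{fundeqns}). The equation $D_T F_1 D_T = T_1 - T_2^* T$ fixes every number $\langle F_1 D_T h, D_T h'\rangle = \langle (T_1 - T_2^* T) h, h'\rangle$, and since $\mathcal{D}_T = \overline{\mathrm{Ran}\, D_T}$, these numbers determine the bounded operator $F_1$ uniquely; the same argument applies to $F_2$. As the operators in (\ref{thefunds}) were shown in the first part to be solutions, any solution must coincide with them, which is precisely the assertion.

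The computations themselves are routine; the only point requiring care — and what I would flag as the main obstacle — is that neither $\Lambda^*$ nor $U^*$ admits a handy explicit formula, so I would never compute them directly but always transfer them across an inner product onto the vectors $\Lambda D_T h'$ and $U \Lambda D_T h'$, whose values are supplied by (\ref{V}) and (\ref{TheUnitary}). The one genuinely essential algebraic input, used to convert $D_{T_2}^2 T_1$ into $T_1 - T_2^* T$ and $D_{T_1}^2 T_2$ into $T_2 - T_1^* T$, is the commutativity $T_1 T_2 = T_2 T_1 = T$.
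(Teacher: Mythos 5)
Your proposal is correct and follows essentially the same route as the paper: the forward direction is verified by pairing $\langle E_i\Lambda D_Th,\Lambda D_Th'\rangle$ against the explicit vectors $\Lambda D_Th'=D_{T_1}T_2h'\oplus D_{T_2}h'$ and $U\Lambda D_Th'=D_{T_1}h'\oplus D_{T_2}T_1h'$, and the converse is exactly the uniqueness argument from density of $\mathrm{Ran}\,D_T$ in $\mathcal{D}_T$. No gaps.
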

\begin{proof}
Let us call the partial isometries $P^\perp U$ and $U^*P$ by $E_1$ and $E_2$, respectively. Therefore $E_1$ and $E_2$ have the properties (\ref{thefundon-E}). Therefore for every $h,h'\in \mathcal H$ and $i=1,2$,
\begin{eqnarray*}
\langle D_TF_iD_Th,h'\rangle&=&\langle E_i\Lambda D_Th,\Lambda D_Th'\rangle\\&=&\langle E_i\big(D_{T_1}T_2h\oplus D_{T_2}h\big),\big(D_{T_1}T_2h'\oplus D_{T_2}h'\big) \rangle\\
   &=&\left\{
	\begin{array}{ll}
		\langle \big(0\oplus D_{T_2}T_1h\big), \big(D_{T_1}T_2h'\oplus D_{T_2}h'\big)\rangle =\langle (T_1-T_2^*T)h,h' \rangle& \mbox{if } i=1 \\
		\langle \big(D_{T_1}T_2h\oplus D_{T_2}h\big), \big(D_{T_1}h'\oplus 0\big)\rangle =\langle (T_2-T_1^*T)h,h' \rangle&\mbox{if } i=2.
	\end{array}
\right.
\end{eqnarray*}
This proves the first part of the theorem. To prove the second part, let there be two pairs $(F_1,F_2)$ and $(F_1',F_2')$ of operators on $\mathcal{D}_T$ which satisfy equations (\ref{fundeqns}), then $D_T(F_i-F_i')D_T=0$ for $i=1,2$. Since the operators act on the defect space $\mathcal{D}_T$, this implies that $F_i=F_i'$ for $i=1,2$.
\end{proof}
\begin{remark}\label{fundeqnsThm-Rmk}
Note that since Theorem \ref{fundeqnsThm} holds for any pair of commuting contractions, in particular, it holds for the adjoint $(T_1^*,T_2^*)$ also. Therefore we have a result similar to Theorem \ref{fundeqnsThm} in terms of the And\^o tuple $(\mathcal{F}_*,\Gamma,P',U')$ for $(T_1^*,T_2^*)$.
\end{remark}
We are now ready to prove the main result of this section.
\begin{proof}[Proof of Theorem \ref{uniqueness}]
Let $(W_1,W_2,W)$ be in $\mathcal{U}_{\underline{T}}$. We show that $(W_1,W_2,W)$ is unitarily equivalent to $(S_1,S_2,V_S)$. Without loss of generality, we assume that
$$W=\left(
     \begin{array}{cc}
      T & 0 \\
      D_T & M_{z} \\
     \end{array}
   \right)=V_S.$$Since $(W_1,W_2,W)$ is a dilation of $(T_1,T_2,T_1T_2)$, we suppose that with respect to the decomposition $\mathcal{H}\oplus H^2(\mathcal{D}_T)$, $W_1$ and $W_2$ are given by
$$
W_1=\left(
     \begin{array}{cc}
      T_1 & 0 \\
      X_1 & Y_1 \\
     \end{array}
   \right)\text{ and }
   W_2=\left(
     \begin{array}{cc}
      T_2 & 0 \\
      X_2 & Y_2 \\
     \end{array}
   \right),
$$respectively. Since the pairs $(W_1,W)$ and $(W_2,W)$ are commuting, we have by comparing the $(2,2)$ entries, $$Y_iM_z=M_zY_i, \text{ for both $i=1,2$.}$$Hence $Y_i=M_{\varphi_i}$ for some $\varphi_i\in H^\infty(\mathcal{B}(\mathcal D_T))$ for both $i=1,2$. Now $W_1=W_2^*W$ implies the following three equalities:
\begin{eqnarray}\label{1*2}
\begin{cases}
      (a)M_{\varphi_1}=M_{\varphi_2}^*M_z \\
      (b)X_1=M_{\varphi_2}^*D_T\\
      (c)T_1-T_2^*T=D_TF_1D_T.
   \end{cases}
\end{eqnarray}
Considering the power series expansions of $\varphi_1$ and $\varphi_2$ we have by (\ref{1*2})$(a)$,
$$
\varphi_1(z)=F_1+zF_2^* \text{ and }\varphi_2(z)=F_2+zF_1^*
$$for some $F_1$ and $F_2$ in $\mathcal{B}(\mathcal D_T)$. This and (\ref{1*2})$(b)$ imply $X_1=F_2^*D_T$. Now the pair $(W_2,W)$ is commuting and $W$ is an isometry imply $W_2=W_1^*W$, which implies
\begin{eqnarray}\label{2*1}
\begin{cases}
      (a)X_2=F_1^*D_T\\
      (b)T_2-T_1^*T=D_TF_2D_T.
   \end{cases}
\end{eqnarray}
Therefore, we have, so far shown that
$$
W_1=\left(
     \begin{array}{cc}
      T_1 & 0 \\
      F_2^*D_T & M_{F_1+zF_2^*} \\
     \end{array}
   \right)\text{ and }
   W_2=\left(
     \begin{array}{cc}
      T_2 & 0 \\
      F_1^*D_T & M_{F_2+zF_1^*} \\
     \end{array}
   \right).
   $$Note that since $M_{\varphi_1}$ and $M_{\varphi_2}$ are contractions, so are $F_1$ and $F_2$. The rest of the proof follows from equations (\ref{1*2})$(c)$, (\ref{2*1})$(b)$ and Theorem \ref{fundeqnsThm}.
\end{proof}
We have the following direct consequence of Theorem \ref{uniqueness}.
\begin{corollary}
For a pair $(T_1,T_2)$ of commuting contractions and $T=T_1T_2$, if the isometries $\Lambda:\mathcal{D}_T\to \mathcal{F}$ and $\Gamma:\mathcal{D}_{T^*}\to\mathcal{F}_*$ as defined in (\ref{V}) and (\ref{the-iso-pure}), respectively, are surjective, then the And\^o dilations constructed in Theorem \ref{a-dilation} and Theorem \ref{thedemi2ndmain} are unitarily equivalent. In Sz.-Nagy--Foais terminology, if both the factorizations $T=T_1T_2$ and $T^*=T_1^*T_2^*$ are regular (\S 3, Chapter VII, \cite{Nagy-Foias}), then the two And\^o dilations constructed in this paper are unitarily equivalent.
\end{corollary}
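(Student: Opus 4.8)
The plan is to deduce the corollary directly from Theorem \ref{compression-uniquenss}, the whole point being that the surjectivity hypothesis upgrades the two connecting isometries $\Pi_\Lambda$ and $\Pi_\Gamma$ from isometries to unitaries. First I would record the elementary fact that an isometry with dense (here, full) range is a unitary; thus under the hypotheses both $\Lambda:\mathcal D_T\to\mathcal F$ and $\Gamma:\mathcal D_{T^*}\to\mathcal F_*$ are unitaries. Since $\Pi_\Lambda=I_{\mathcal H}\oplus(I_{H^2}\otimes\Lambda)$ of (\ref{thePi}) and $\Pi_\Gamma=(I_{H^2}\otimes\Gamma)\oplus I_{\mathcal R}$ of (\ref{theGamma}) are assembled from $\Lambda$ and $\Gamma$ by tensoring with an identity and forming a direct sum with an identity operator, both $\Pi_\Lambda$ and $\Pi_\Gamma$ are then unitaries rather than merely isometries.

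Next, when $\Pi_\Lambda$ is a unitary the compression $A\mapsto\Pi_\Lambda^*A\Pi_\Lambda$ is a genuine unitary conjugation, so the triple $\Pi_\Lambda^*(V_1,V_2,V_1V_2)\Pi_\Lambda$ is unitarily equivalent, via $\Pi_\Lambda$ itself, to the Sch\"affer dilation $(V_1,V_2,V_1V_2)$ of Theorem \ref{a-dilation}; the identical argument with $\Pi_\Gamma$ shows that $\Pi_\Gamma^*(V^D_1,V^D_2,V^D_1V^D_2)\Pi_\Gamma$ is unitarily equivalent to the Douglas dilation $(V^D_1,V^D_2,V^D_1V^D_2)$ of Theorem \ref{thedemi2ndmain}. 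Theorem \ref{compression-uniquenss} says that these two compressed triples are unitarily equivalent to each other; writing $\tau:\mathcal H\oplus H^2(\mathcal D_T)\to H^2(\mathcal D_{T^*})\oplus\mathcal R$ for the unitary realizing that equivalence, I would set $\omega:=\Pi_\Gamma\,\tau\,\Pi_\Lambda^*$. Then $\omega$ is a composition of unitaries, and inserting the identities $\Pi_\Lambda\Pi_\Lambda^*=I$ and $\Pi_\Gamma\Pi_\Gamma^*=I$ gives at once $\omega V_i=V_i^D\omega$ for $i=1,2$, exhibiting the two And\^o dilations as unitarily equivalent.

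Finally, for the Sz.-Nagy--Foias reformulation I would simply recall the definition (\S 3, Chapter VII of \cite{Nagy-Foias}): the factorization $T=T_1T_2$ is regular exactly when the isometry $\Lambda$ of (\ref{V}) is onto (so in particular the auxiliary $l^2$ summand of (\ref{the aux space}) is superfluous and $\mathcal F=\mathcal D_{T_1}\oplus\mathcal D_{T_2}$), and likewise $T^*=T_1^*T_2^*$ is regular exactly when $\Gamma$ of (\ref{the-iso-pure}) is onto. Thus the regularity statement is a verbatim translation of the surjectivity statement and requires no further argument. There is no genuine obstacle in this proof; the only points demanding care are the verification that surjectivity of $\Lambda$ and $\Gamma$ coincides with regularity of the two factorizations and the attendant check that surjectivity rules out the extra $l^2$ in (\ref{the aux space}) and (\ref{the aux space*}), after which the conclusion is immediate from Theorem \ref{compression-uniquenss}.
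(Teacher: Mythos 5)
Your proposal is correct and follows exactly the route the paper intends: the paper presents this corollary as a direct consequence of Theorem \ref{uniqueness} (via Theorem \ref{compression-uniquenss}), and your observation that surjectivity of $\Lambda$ and $\Gamma$ promotes $\Pi_\Lambda$ and $\Pi_\Gamma$ to unitaries, so that conjugation by them is a genuine unitary equivalence, is precisely the missing glue. The identification of surjectivity of $\Lambda$ and $\Gamma$ with regularity of the factorizations $T=T_1T_2$ and $T^*=T_1^*T_2^*$ is likewise the standard Sz.-Nagy--Foias definition, so nothing further is needed.
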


\section{Functional models and unitary invariants for the pure case - proof of Theorem \ref{uniqueness-pure} and Theorem \ref{charc-admiss}}\label{FuncModelSec}
The motivation behind this section is the celebrated Sz.-Nagy and Foias model theory for contractions, see Chapter VI of the classic \cite{Nagy-Foias}. The objective of this section is to develop a similar model theory for pairs $(T_1,T_2)$ of commuting contractions such that $T_1T_2$ is pure. First we record the following three consequences of Theorem \ref{the2ndmainresult}.
\begin{proof}[{\bf{Proof of Theorem \ref{B-C-L}}}]
We first compute the And\^o tuple $(\mathcal{V},\Gamma_\nu,P_\nu,U_\nu)$ for $(V_1^*,V_2^*)$, where $(V_1,V_2)$ is a commuting pair of isometries on $\mathcal{H}$. For that we note the following simple fact.
\begin{lemma}\label{justlikethat} For a pair $(V_1,V_2)$ of commuting isometries on a Hilbert space $\mathcal H$,$$\{D_{V^*_1}V_2^*h\oplus D_{V^*_2}h: h\in\mathcal H \}=\mathcal D_{V_1^*}\oplus\mathcal D_{V_2^*}=\{D_{V^*_1}h\oplus D_{V^*_2}V^*_1h: h \in \mathcal H\}.$$
\end{lemma}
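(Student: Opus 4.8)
The plan is to prove the two asserted set equalities by showing each side is contained in the other, exploiting the fact that $V_1$ and $V_2$ are \emph{isometries} (so $V_i^*V_i = I$) to collapse the defect operators. I will prove only the first equality, $\{D_{V_1^*}V_2^*h \oplus D_{V_2^*}h : h \in \mathcal H\} = \mathcal D_{V_1^*}\oplus \mathcal D_{V_2^*}$, since the second follows by the symmetric argument (interchanging the roles of $V_1$ and $V_2$, or equivalently reading off the analogous expression from the And\^o-tuple construction applied to $(V_2,V_1)$). The inclusion $\subseteq$ is automatic from the definition of $\mathcal D_{V_1^*}$ and $\mathcal D_{V_2^*}$ as the closures of the ranges of the respective defect operators, so the content is the reverse inclusion $\supseteq$, i.e.\ showing the indicated set of vectors is already \emph{all} of the direct sum.

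First I would record the key structural simplification for commuting isometries. Since $V_1$ is an isometry, $V_1^*V_1 = I$, and since $V_1, V_2$ commute, a direct computation gives $V_1 V_2 V_2^* V_1^* = V_1(I - D_{V_2^*}^2)V_1^*$; more usefully, I would observe that for an isometry the defect operator $D_{V_i} = (I - V_i^*V_i)^{1/2} = 0$, while the co-defect $D_{V_i^*} = (I - V_iV_i^*)^{1/2}$ is the relevant nonzero object. The plan is to show the map $h \mapsto D_{V_1^*}V_2^*h \oplus D_{V_2^*}h$ has range dense in (in fact equal to) $\mathcal D_{V_1^*}\oplus \mathcal D_{V_2^*}$. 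Following the And\^o-tuple formalism already set up in the excerpt, this is exactly the range of the isometry $\Gamma$ (here built from $(V_1^*,V_2^*)$ rather than a general pair), and the claim is that for commuting isometries this isometry is in fact \emph{surjective} onto $\mathcal D_{V_1^*}\oplus \mathcal D_{V_2^*}$.

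The cleanest route I would take is to verify surjectivity directly: given an arbitrary $f \oplus g \in \mathcal D_{V_1^*}\oplus \mathcal D_{V_2^*}$, I must produce $h$ (or an approximating sequence) with $D_{V_1^*}V_2^* h = f$ and $D_{V_2^*} h = g$. The natural candidate exploits the identity $V = V_1 V_2$ and the relation, valid because $V_2$ is an isometry, that $D_{V_1^*}V_2^* = D_{V_1^*}V_2^*$ intertwines the defect spaces in a controlled way; concretely I expect to use that $\operatorname{Ran} D_{V_2^*}$ and $\operatorname{Ran} D_{V_1^*}V_2^*$ together span because the norm identity $\|D_{V_1^*}V_2^*h\|^2 + \|D_{V_2^*}h\|^2 = \|D_{V^*}h\|^2$ (recorded earlier in the excerpt for general contractions, applied to $(V_1^*,V_2^*)$) together with $D_{V^*}=0$-type rigidity for the \emph{isometric} product pins the image. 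The main obstacle I anticipate is precisely this surjectivity: proving $\supseteq$ requires showing no vector of $\mathcal D_{V_1^*}\oplus \mathcal D_{V_2^*}$ is omitted, which amounts to checking that the orthogonal complement of the range is trivial. I would handle this by taking $f\oplus g$ orthogonal to every $D_{V_1^*}V_2^* h \oplus D_{V_2^*}h$, writing out the resulting two adjoint relations $V_2 D_{V_1^*} f + D_{V_2^*} g = 0$ interpreted on $\mathcal H$, and using commutativity plus the isometry relations $V_i^* V_i = I$ to force $f = 0$ and $g = 0$; the commuting-isometry hypothesis is exactly what makes the cross terms cancel, and this is where the lemma genuinely uses that $(V_1,V_2)$ are isometries rather than mere contractions.
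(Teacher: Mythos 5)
Your proposal is correct and follows essentially the same route as the paper: the paper also proves the reverse inclusion by taking $f\oplus g$ orthogonal to the set $\{D_{V_1^*}V_2^*h\oplus D_{V_2^*}h\}$, using that $D_{V_i^*}$ is the orthogonal projection onto $(\operatorname{Ran}V_i)^\perp$ to reduce the adjoint relation to $V_2f+g=0$, and then concluding $g\in\operatorname{Ran}V_2\cap(\operatorname{Ran}V_2)^\perp=\{0\}$ and hence $f=0$. The one detail worth making explicit when you write it up is precisely this projection property of the co-defect operators of isometries, which collapses $V_2D_{V_1^*}f+D_{V_2^*}g=0$ to $V_2f+g=0$ and makes the cancellation you anticipate go through.
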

\begin{proof}
We only establish the first equality, the proof of the second equality is similar. In the proof we use the basic fact that if $V$ is an isometry, then $D_{V^*}$ is the projection onto ${Ran V_2}^\perp$. Let $f\oplus g \in \mathcal D_{V_1^*}\oplus\mathcal D_{V_2^*}$ be such that $$\langle D_{V^*_1}V_2^*h\oplus D_{V^*_2}h, f\oplus g \rangle=0 \text{ for all $h\in\mathcal H$}.$$ This is equivalent to $\langle D_{V^*_1}V_2^*h, f\rangle + \langle D_{V^*_2}h, g\rangle =0 \text{ for all $h\in\mathcal H$}$, which implies that $\langle V_2^*h, f\rangle + \langle h, g\rangle =0 \text{ for all $h\in\mathcal H$}$. Hence $g=-V_2f$, which implies that $g=D_{V_2^*}g=-(I-V_2V_2^*)V_2f=0. \text{ Hence } f=0 \text{ too}$.
\end{proof}
Denote $\mathcal{V}:=\mathcal D_{V_1^*}\oplus\mathcal D_{V_2^*}$ and $V=V_1V_2$. By Lemma \ref{justlikethat}, we have the operators $\Gamma_\nu:\mathcal D_{V^*}\to\mathcal V$ and $U_\nu:\mathcal V\to \mathcal V$ defined by
\begin{eqnarray}\label{Ando-tuple-isometries}
\;\;\;\;\;\Gamma_\nu D_{V^*}h=D_{V^*_1}V_2^*h\oplus D_{V^*_2}h \text{ and }U_\nu\big(D_{V^*_1}V^*_2h\oplus D_{V^*_2}h\big)=D_{V^*_1}h\oplus D_{V^*_2}V^*_1h,
\end{eqnarray}respectively, are unitaries. In terms of Sz.-Nagy--Foias terminology, this means that the factorization of a co-isometry into the product of contraction is always regular, see (\S 3 in Chapter VII of \cite{Nagy-Foias}).

Now note that when the product $T_1T_2$ is a pure contraction, then the Hilbert space $\mathcal{R}$ in Theorem \ref{the2ndmainresult} is zero. Theorefore applying Theorem \ref{the2ndmainresult} to the pair $(V_1,V_2)$ such that $V_1V_2$ is pure, we get Theorem \ref{B-C-L}.
\end{proof}
 \begin{remark}
Note that our method of the proof of Theorem \ref{B-C-L} reveals that the space $\mathcal F$ in the statement can also be chosen to be $\mathcal D_{V_1^*}\oplus\mathcal D_{V_2^*}$.
\end{remark}
\begin{proof}[{\bf{Proof of Theorem \ref{D-S-S}}}]
It follows from  proof of $({\bf{P}})\Leftrightarrow({\bf{A'}})$ in Theorem \ref{the2ndmainresult} and the fact that $\mathcal{R}=0$.
\end{proof}
\begin{proof}[{\bf{Proof of Theorem \ref{DSS-compression}}}]
It is known from the time of Sz.-Nagy--Foias that when $T$ is a pure contraction, $\Theta_T$ is an inner function and
\begin{eqnarray}\label{represenQ}
\mathcal Q:=Ran \mathcal{O}=(\Theta_{T}H^2(\mathcal D_T))^\perp=H^2(\mathcal D_{T^*})\ominus\Theta_{T}H^2(\mathcal D_T).
\end{eqnarray} Now if we specialize Theorem \ref{the2ndmainresult} to the case when the product $T=T_1T_2$ is pure, then the space $\mathcal{R}=0$ and hence it follows from the proof of the implication $({\bf{P}})\Rightarrow ({\bf{D}})$ that if $(G_1,G_2,\Theta_T)$ is the characteristic triple for $(T_1,T_2)$, then
$$\mathcal{O}(T_1,T_2,T_1T_2)^* =(M_{G_1^*+zG_2},M_{G_2^*+zG_1},M_z)^*\mathcal{O}.$$Therefore $(G_1,G_2,\Theta_T)$ satisfies all the conditions to be an admissible triple and
$$(T_1,T_2,T_1T_2) \text{ is unitarily equivalent to }P_{\mathcal Q}(M_{G_1^*+zG_2},M_{G_2^*+zG_1},M_z)|_{\mathcal Q}.$$
\end{proof}
We shall now prove Theorem \ref{uniqueness-pure} and Theorem \ref{charc-admiss}. The following result will be used.
\begin{theorem}[Sz.-Nagy and Foias, \cite{Nagy-Foias}]\label{NFFuncModel}
If $(\mathcal{D},\mathcal{D}_*,\Theta)$ is an inner function, then
$$
P_{(\Theta H^2(\mathcal{D}))^\perp}M_z|_{(\Theta H^2(\mathcal{D}))^\perp}
$$is a pure contraction with its characteristic function coinciding with $\Theta$.
\end{theorem}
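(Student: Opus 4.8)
The plan is to study the compression $T := P_{\mathcal{M}^\perp}M_z|_{\mathcal{M}^\perp}$ directly, where I abbreviate $\mathcal{M} := \Theta H^2(\mathcal{D}) \subseteq H^2(\mathcal{D}_*)$, so that $\mathcal{M}^\perp = (\Theta H^2(\mathcal{D}))^\perp$. First I would record the structural facts. Because $\Theta$ is inner, $M_\Theta : H^2(\mathcal{D}) \to H^2(\mathcal{D}_*)$ is an isometry commuting with $M_z$, so $\mathcal{M}$ is $M_z$-invariant and hence $\mathcal{M}^\perp$ is $M_z^*$-invariant; consequently $T^* = M_z^*|_{\mathcal{M}^\perp}$ and $T$ is a contraction, being the compression of an isometry. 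Purity is then immediate: on $H^2(\mathcal{D}_*)$ the backward shift satisfies $M_z^{*n} \to 0$ strongly (the tail of the Taylor coefficients), so $T^{*n} = M_z^{*n}|_{\mathcal{M}^\perp} \to 0$ strongly, whence $T$ is a pure contraction and in particular completely non-unitary. As recalled in the proof of Theorem \ref{DSS-compression}, its characteristic function $\Theta_T$ is then inner, so it is meaningful to ask for coincidence with $\Theta$.

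Next I would identify the two defect spaces. Writing $E_0$ for the projection of $H^2(\mathcal{D}_*)$ onto its constants (a copy of $\mathcal{D}_*$) and using $M_zM_z^* = I - E_0$, short computations give $D_{T^*}^2 = P_{\mathcal{M}^\perp}E_0|_{\mathcal{M}^\perp}$ and $D_T^2 = M_z^*P_{\mathcal{M}}M_z|_{\mathcal{M}^\perp}$. From the first I would construct a unitary $u_* : \mathcal{D}_{T^*} \to \mathcal{D}_*$, built from the map sending a defect vector to the constant term $f(0)$ of its generator, after accounting for the part of $\mathcal{D}_*$ absorbed into $\mathcal{M}$ through $\Theta(0)$. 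From the second, using that $\mathcal{M} = \Theta H^2(\mathcal{D})$ has wandering subspace $\Theta\mathcal{D}$ on which $\Theta$ acts isometrically from $\mathcal{D}$, I would construct a unitary $u : \mathcal{D}_T \to \mathcal{D}$. Here one uses that $\Theta$ is purely contractive — automatic for the characteristic functions to which the theorem is applied — to guarantee that $\dim\mathcal{D}_T = \dim\mathcal{D}$ and $\dim\mathcal{D}_{T^*} = \dim\mathcal{D}_*$.

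The decisive step is to substitute $u$ and $u_*$ into the defining formula (\ref{char}), namely $\Theta_T(z) = [\,-T + zD_{T^*}(I - zT^*)^{-1}D_T\,]|_{\mathcal{D}_T}$, and to verify that $u_*\Theta_T(z)u^{-1} = \Theta(z)$ for every $z \in \mathbb{D}$, which is precisely the assertion that the coincidence square of the Definition above commutes. The main obstacle lies exactly here: one must unwind the resolvent $(I - zT^*)^{-1} = \sum_{n \geq 0} z^n M_z^{*n}|_{\mathcal{M}^\perp}$ as a power series in the backward shift and resum it, played against the two defect maps, so as to reproduce the Taylor coefficients of $\Theta$. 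I expect the cleanest bookkeeping is to test the identity on generators of the form $D_T k$ and to collapse the series using the defining orthogonality $\langle\,\cdot\,,\Theta g\rangle = 0$ of $\mathcal{M}^\perp$. Alternatively, one may bypass the explicit resummation by recognizing $M_z$ on $H^2(\mathcal{D}_*)$ as the minimal isometric dilation of $T$ and reading $\Theta_T$ off the Sz.-Nagy--Foias wave decomposition of that dilation, which forces $\Theta_T$ to coincide with the given datum $\Theta$; this is the route carried out in Chapter VI of \cite{Nagy-Foias}.
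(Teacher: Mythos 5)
The paper does not actually prove this statement: it is quoted as a known result from Chapter VI of \cite{Nagy-Foias} and used as a black box in the proof of Theorem \ref{charc-admiss}, so there is no in-paper argument to compare yours against. Judged on its own terms, your outline follows the standard Sz.-Nagy--Foias route, and the parts you carry out are correct: the $M_z^*$-invariance of $\mathcal{M}^\perp$, the identity $T^{*n}=M_z^{*n}|_{\mathcal{M}^\perp}\to 0$ giving purity, and the formulas $D_{T^*}^2=P_{\mathcal{M}^\perp}E_0|_{\mathcal{M}^\perp}$ and $D_T^2=M_z^*P_{\mathcal{M}}M_z|_{\mathcal{M}^\perp}$ all check. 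Your remark that $\Theta$ must be purely contractive is also well taken, and is in fact a hypothesis missing from the statement as printed: for $\Theta(z)=\operatorname{diag}(z,1)$ on $\mathcal{D}=\mathcal{D}_*=\mathbb{C}^2$, which is inner, the model operator is the zero operator on a one-dimensional space, its characteristic function is the scalar $z$, and coincidence with $\Theta$ is impossible because the defect dimensions disagree. Since the paper invokes the theorem for the $\Theta$ of a general admissible triple rather than only for characteristic functions, this caveat is not cosmetic.

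The genuine gap is that the decisive step --- the verification that $u_*\Theta_T(z)u^{-1}=\Theta(z)$ --- is never performed. You name two strategies (resumming the resolvent series $\sum_n z^nM_z^{*n}$ against the two defect maps, or recognizing $M_z$ on $H^2(\mathcal{D}_*)$ as the minimal isometric dilation of $T$ and reading $\Theta_T$ off its geometry) but execute neither; nor do you actually define the unitaries $u$ and $u_*$ beyond saying they are ``built from'' certain maps, and showing that the map $D_{T^*}f\mapsto f(0)$ (suitably corrected by $\Theta(0)$) is an isometry \emph{onto} $\mathcal{D}_*$ is exactly where pure contractivity enters and where the work lies. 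Since the coincidence of $\Theta_T$ with $\Theta$ is the entire content of the theorem, what you have is a correct plan for the classical proof rather than a proof. If you complete it, the second strategy is the cleaner one: for purely contractive inner $\Theta$ one checks $\bigvee_{n\ge 0}M_z^n\mathcal{M}^\perp=H^2(\mathcal{D}_*)$ (a nonzero defect of this equality would produce a reducing subspace $H^2(\mathcal{E})\subseteq\Theta H^2(\mathcal{D})$, forcing $\|\Theta(0)d\|=\|d\|$ for some $d\ne 0$), so that $M_z$ on $H^2(\mathcal{D}_*)$ is the minimal isometric dilation of $T$, and then the wandering-subspace analysis of Chapter VI of \cite{Nagy-Foias} identifies $\Theta_T$ with $\Theta$ up to the unitaries $u$, $u_*$.
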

%
\begin{proof}[{\bf{Proof of Theorem \ref{uniqueness-pure}}}]
We first prove the `only if' direction. Let $(T_1,T_2)$ on $\mathcal{H}$ and $(T_1',T_2')$ on $\mathcal{H'}$ be two pairs of commuting contractions such that their products $T=T_1T_2$ and $T=T_1'T_2'$ are pure. Let $(G_1,G_2,\Theta_T)$ and $(G_1',G_2',\Theta_{T'})$ be their characteristic triples. Let $U:\mathcal{H}\to\mathcal{H}'$ be a unitary such that
$
U(T_1,T_2)=(T_1',T_2')U.
$ It can be checked easily that this unitary intertwines the defect operators:
$$UD_T=D_{T'}U\text{ and }UD_{T^*}=D_{T'^*}U.$$ Let $u$ and $u_*$ be the unitaries $u:=U|_{\mathcal{D}_T}$ and $u_*:=U|_{\mathcal{D}_{T^*}}$. For all $h\in\mathcal{H}$,
\begin{eqnarray*}
\Theta_{T'}uD_Th&=&(-T'+zD_{T'^*}(I_{\mathcal{H}'}-zT'^*)^{-1}D_{T'})uD_Th\\
&=&u_*(-T+zD_{T^*}(I_{\mathcal{H}}-zT^*)^{-1}D_{T})D_Th=u_*\Theta_{T}D_Th,
\end{eqnarray*}and by Remark \ref{fundeqnsThm-Rmk},
\begin{eqnarray*}
D_{T^*}u_*^*G_1'u_*D_{T^*}h=u_*^*D_{T'^*}G_1'D_{T'^*}u_*=U^*(T_1'^*-T_2'T'^*)U=T_1^*-T_2T^*=D_{T^*}G_1D_{T^*}
\end{eqnarray*}which by uniqueness gives $u_*^*G_1'u_*=G_1$. Similar computation gives $u_*^*G_2'u_*=G_2$.

Conversely, suppose that the characteristic triples $(G_1,G_2,\Theta_T)$ and $(G_1',G_2',\Theta_{T'})$ of two pairs $(T_1,T_2)$ on $\mathcal{H}$ and $(T_1',T_2')$ on $\mathcal{H'}$ of commuting contractions with their products $T=T_1T_2$ and $T=T_1'T_2'$ being pure, coincide. Let $u:\mathcal{D}_T\to\mathcal{D}_{T'}$ and $u_*:\mathcal{D}_{T^*}\to\mathcal{D}_{T'^*}$ be two unitaries such that $\Theta_{T'}u=u_*\Theta_{T}$ and $u_*G_i=G_i'u_*$, $i=1,2$. It is a matter of straightforward computation to check that the unitary $U_*:=I_{H^2}\otimes u_*$ takes $\Theta_TH^2(\mathcal{D}_T)$ onto $\Theta_{T'}H^2(\mathcal{D}_{T'})$ and intertwines the functional models corresponding to $(G_1,G_2,\Theta_T)$ and $(G_1',G_2',\Theta_{T'})$. Hence an application of Theorem \ref{DSS-compression} seals the deal.
\end{proof}

\begin{proof}[{\bf{Proof of Theorem \ref{charc-admiss}}}]
Let $(\mathcal{D},\mathcal{D}_*,\Theta)$ be an inner function and $G_1,G_2$ on $\mathcal{D}_*$ be contractions such that the triple $(G_1,G_2,\Theta)$ is admissible. Let us define
$$
(T_1,T_2):=P_{\mathcal M^\perp}(M_{G_1^*+zG_2},M_{G_2^*+zG_1})|_{\mathcal M^\perp}, \text{ where }\mathcal{M}:=\Theta H^2(\mathcal{D}).
$$Defining criteria for admissibility imply that $\underline{T}=(T_1,T_2)$ is a commuting pair of contractions on $\mathcal{M}^\perp$ and that $T=T_1T_2$ is a pure contraction. Let $(G_1',G_2',\Theta_T)$ be the characteristic triple for $(T_1,T_2)$. By Theorem \ref{NFFuncModel}, $\Theta$ coincides with $\Theta_T$. This means that there exist unitaries $u:\mathcal{D}\to\mathcal{D}_T$ and $u_*:\mathcal{D}_*\to\mathcal{D}_{T^*}$ such that $\Theta_Tu=u_*\Theta$. Note that since $\Theta$ and $\Theta_T$ are inner functions, both the triples $\underline{G}:=(M_{G_1^*+zG_2},M_{G_2^*+zG_1},M_z)$ on $H^2(\mathcal{D}_*)$ and $\underline{G'}:=(M_{G_1'^*+zG_2'},M_{{G'_2}^*+zG'_1},M_z)$ on $H^2(\mathcal{D}_{T^*})$ are dilations of $(T_1,T_2,T)$ and are in the family $\mathcal{U}_{\underline{T}}$. Hence there exists a unitary $U_*:H^2(\mathcal{D}_*)\to H^2(\mathcal{D}_{T^*})$ such that $U_*\underline{G}=\underline{G'}U_*$ and $U_*|_{(\Theta H^2(\mathcal{D}))^\perp}=I$, where $(\Theta H^2(\mathcal{D}))^\perp$ and $(\Theta_TH^2(\mathcal{D}_T))^\perp$ are identified as
$$
(I-\Theta\Theta^*)f\mapsto(I-\Theta_T\Theta_T^*)(I_{H^2}\otimes u_*)f, \text{ for all }f\in H^2(\mathcal{D}_*).
$$Since both the dilations $M_z$ on $H^2(\mathcal{D}_*)$ and $M_z$ on $H^2(\mathcal{D}_{T^*})$ of $T$ are minimal such a unitary is unique and since $(I_{H^2}\otimes u_*)$ is one such unitary, we conclude that $U_*=I_{H^2}\otimes u_*$. Therefore the two triples $(G_1,G_2,\Theta)$ and $(G_1',G_2',\Theta_T)$ coincide.

Conversely, suppose $(G_1,G_2,\Theta)$ is characteristic triple for some pair $(T_1,T_2)$ of commuting contractions such that $T=T_1T_2$ is pure. In the proof of Theorem \ref{DSS-compression} we observed that every characteristic triple is actually an admissible triple.
\end{proof}
\begin{remark}
For a tuple $\underline{A}=(A_1,A_2,\dots, A_n)$ of operators on $\mathcal{H}$, a tuple $\underline{B}=(B_1,B_2,\dots,B_n)$ of operators acting on $\mathcal{K}$ containing $\mathcal{H}$ is called a {\em{ joint Halmos dilation}} of $\underline{A}$, if there exists an isometry $\Gamma:\mathcal{H}\to\mathcal{K}$ such that $A_i=\Gamma^*B_i\Gamma$ for each $i=1,2,\dots,n$. Note that Theorem \ref{charc-admiss} shows that if $(G_1,G_2,\Theta)$ is an admissible triple, then $(G_1,G_2)$ has a joint Halmos dilation to a commuting pair $(P^\perp U,U^*P)$ of partial isometries, where $P$ is a projection and $U$ is a unitary.

\end{remark}

\section{Concluding Remarks}\label{ConRem}
\subsection{The connection with the tetrablock theory}\label{TetrablockSec}
The purpose of this subsection is to present how the present work relates to the operator theory of a {\it{tetrablock}} domain, which is the following non-convex but polynomially convex domain in $\mathbb C^3$:
$$
\mathbb E=\left\{(x_{11},x_{22},\text{det}X): X=\begin{pmatrix} x_{11} & x_{12} \\ x_{21} & x_{22} \end{pmatrix}\text{ with }\lVert X \rVert <1\right\}.
$$
This domain arose in connection with the $\mu$-synthesis problem that arises in control engineering and was first studied in \cite{awy} for its geometric properties.
The operator theory on the tetrablock was first developed in \cite{sir's tetrablock paper}.
\begin{definition}[Bhattacharyya, \cite{sir's tetrablock paper}]
A triple $(A,B,T)$ of commuting bounded operators on a Hilbert space $\mathcal{H}$ is called a tetrablock contraction if $\overline{\mathbb E}$ is a spectral set for $(A,B,T)$, i.e., the Taylor joint spectrum of $(A,B,T)$ is contained in $\overline{\mathbb E}$ and
$$
||f(A,B,T)|| \leq ||f||_{\infty,\overline{\mathbb E}}=\sup\{ |f(x_1,x_2,x_3)|:(x_1,x_2,x_3) \in \overline{\mathbb E}\}
$$
for any polynomial $f$ in three variables.
\end{definition}
It turns out that because tetrablock is polynomially convex the condition of the Taylor joint spectrum being inside the set, is redundant, see Lemma 3.3 in \cite{sir's tetrablock paper}. Note that a tetrablock contraction $(A,B,T)$ is essentially a triple of commuting contractions, which follows when one chooses $f$ to be the projection polynomials in the definition. The following lemma that led us to the current work, is where the tetrablock contraction theory comes into play in this context.
\begin{lemma}\label{thekeylem}
Let $(T_1,T_2)$ be a commuting pair of contractions on a Hilbert space $\mathcal H$ and $T=T_1T_2$. Then the triple $(T_1,T_2,T)$ is a tetrablock contraction.
\end{lemma}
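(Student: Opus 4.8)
The plan is to leverage the And\^o dilation already constructed, together with the fact that commuting unitaries satisfy a von Neumann-type inequality on the distinguished boundary of the tetrablock. Since $\mathbb E$ is polynomially convex, by Lemma 3.3 of \cite{sir's tetrablock paper} it suffices to verify the single inequality $\|f(T_1,T_2,T)\|\le\|f\|_{\infty,\overline{\mathbb E}}$ for every polynomial $f$ in three variables; the Taylor-spectrum requirement is then automatic.

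First I would pass to the dilation supplied by Theorem \ref{a-dilation}. There the commuting pair of contractions $(T_1,T_2)$ co-extends to a commuting pair of isometries $(V_1,V_2)$ on $\mathcal H\oplus H^2(\mathcal F)$ with $V_1V_2=V$ given by (\ref{theV}), so that $(T_1,T_2,T)$ co-extends to $(V_1,V_2,V_1V_2)$: the embedding $\Pi:\mathcal H\to\mathcal K$ satisfies $\Pi T_i^*=V_i^*\Pi$ and hence $\Pi T^*=(V_1V_2)^*\Pi$. Identifying $\mathcal H$ with $\Pi\mathcal H$, the subspace $\mathcal H$ is invariant under $V_1^*,V_2^*$ and $(V_1V_2)^*$, so with respect to $\mathcal H\oplus\mathcal H^\perp$ each of $V_1,V_2,V_1V_2$ is block lower-triangular with $(1,1)$-entries $T_1,T_2,T$. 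As block lower-triangularity is preserved under products, for every polynomial $f$ one gets $f(T_1,T_2,T)=P_{\mathcal H}\,f(V_1,V_2,V_1V_2)|_{\mathcal H}$, whence $\|f(T_1,T_2,T)\|\le\|f(V_1,V_2,V_1V_2)\|$.

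Next I would dilate the isometries to unitaries. By Lemma \ref{special-ext} there are commuting unitaries $(Y_1,Y_2)$ on a larger space with $V_i=Y_i|_{\mathcal K}$ and $Y_1Y_2$ the minimal unitary extension of $V_1V_2$; since $\mathcal K$ is invariant under $Y_1$ and $Y_2$, restriction of analytic polynomials gives $\|f(V_1,V_2,V_1V_2)\|\le\|f(Y_1,Y_2,Y_1Y_2)\|$. The joint spectral theorem for the commuting unitaries $(Y_1,Y_2)$ then yields $\|f(Y_1,Y_2,Y_1Y_2)\|=\sup_{(z_1,z_2)\in\sigma(Y_1,Y_2)}|f(z_1,z_2,z_1z_2)|\le\sup_{(z_1,z_2)\in\mathbb T^2}|f(z_1,z_2,z_1z_2)|$, because $\sigma(Y_1,Y_2)\subseteq\mathbb T^2$.

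The final step, and the only genuinely domain-specific point, is to check that $(z_1,z_2,z_1z_2)\in\overline{\mathbb E}$ for every $(z_1,z_2)\in\mathbb T^2$. Taking $X=\mathrm{diag}(z_1,z_2)$ gives $x_{11}=z_1$, $x_{22}=z_2$, $\det X=z_1z_2$ and $\|X\|=\max(|z_1|,|z_2|)=1$, so approximating by $\mathrm{diag}(rz_1,rz_2)$ with $r\uparrow 1$ places $(z_1,z_2,z_1z_2)$ in $\overline{\mathbb E}$. Hence $\sup_{(z_1,z_2)\in\mathbb T^2}|f(z_1,z_2,z_1z_2)|\le\|f\|_{\infty,\overline{\mathbb E}}$, and chaining the three inequalities finishes the proof. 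I do not anticipate a serious obstacle: the hard work of producing commuting isometric and unitary dilations is already discharged by Theorem \ref{a-dilation} and Lemma \ref{special-ext}, so the argument collapses to the elementary diagonal-matrix observation above.
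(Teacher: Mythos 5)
Your proof is correct and follows essentially the same route as the paper: both reduce, via polynomial convexity of $\mathbb E$, to the inequality $\|f(T_1,T_2,T)\|\le\|f\|_{\infty,\overline{\mathbb E}}$, obtain it from And\^o's theorem through the map $\pi(z_1,z_2)=(z_1,z_2,z_1z_2)$, and finish with the diagonal-matrix observation that $\pi$ maps the (closed) bidisc into $\overline{\mathbb E}$. The only difference is presentational: the paper invokes the two-variable von Neumann inequality as a known consequence of And\^o's theorem, whereas you re-derive it by explicitly passing through the isometric and unitary dilations.
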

\begin{proof}
The proof is a simple application of And\^o's Theorem, which in turn proves an analogue of the famous von Neumann inequality \cite{vonN-Wold, Wold} for pairs $(T_1,T_2)$ of commuting contractions acting on Hilbert spaces:
$$
\|f(T_1,T_2)\|\leq \sup\{|f(z_1,z_2)|: (z_1,z_2)\in \overline{\mathbb D^2}\},
$$for every polynomial $f$ in two variables.  Define the map $\pi:\mathbb D\times \mathbb D\to \mathbb C^3$ by
$\pi(z_1,z_2):=(z_1,z_2,z_1z_2).$ Note that Ran$(\pi) \subset  \mathbb E$. Now let $f$ be any polynomial in three variables. By And\^o's theorem,
$$
\|f\circ \pi (T_1,T_2)\|\leq \|f\circ\pi\|_{\infty,  \bar{\mathbb D}^2}\leq \|f\|_{\infty,  \overline{\mathbb E}},
$$ which proves the lemma.
\end{proof}
 Two operators with certain properties play a fundamental role in the study of tetrablock contractions. We need the following notion to describe it. For a bounded operator $F$ on a Hilbert space $\mathcal H$, the {\em{numerical radius}} is defined to be
 $$w(F):=\sup\{|\langle Fh,h\rangle|: h\in \mathcal H\}.$$It was proved in \cite{sir's tetrablock paper} that for every tetrablock contraction $(A,B,T)$ on a Hilbert space $\mathcal H$, there exist two operators $F_1$ and $F_2$ with the numerical radii at most one such that
\begin{eqnarray}\label{fundamental ops}
A-B^*T=D_TF_1D_T \text{ and } B-A^*T=D_TF_2D_T.
\end{eqnarray}It is easy to see that any two operators $F_1$, $F_2$ acting on $\mathcal D_T$ satisfying (\ref{fundamental ops}) are unique. These unique operators are called the fundamental operators of the tetrablock contraction $(A,B,T)$.

Fundamental operators ever since its invention \cite{B-P-SR} have been proved to be of extreme importance in multi-variable dilation theory, see \cite{B-P-SR, sir and me, sir and me1}. The following lemma that follows from Theorem \ref{fundeqnsThm} and the remark following it, shows that the fundamental operators are present in both the above constructions of And\^o dilation also.
\begin{lemma}\label{thefundtheproof}
Let $(T_1,T_2)$ be a pair of commuting contractions and $T=T_1T_2$. Suppose $(\mathcal{F}, \Lambda, P, U)$ and $(\mathcal{F}_*, \Gamma, P', U')$ are the And\^o tuples for $(T_1,T_2)$ and $(T_1^*,T_2^*)$, respectively. Then the pairs of operators $(F_1,F_2)$ on $\mathcal D_T$ and $(G_1,G_2)$ on $\mathcal{D}_{T^*}$ defined by
\begin{eqnarray}\label{tet-fund}
(F_1,F_2):=(\Lambda^* P^\perp U\Lambda,\Lambda^* U^*P\Lambda) \text{ and } (G_1,G_2):=(\Gamma^*P'^\perp U'\Gamma,\Gamma^*U'^*P'\Gamma),
\end{eqnarray}respectively, are the fundamental operators of the tetrablock contractions $(T_1,T_2,T)$ and $(T_1^*,T_2^*,T^*)$, respectively.
\end{lemma}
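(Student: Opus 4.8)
The plan is to deduce this lemma almost immediately from Theorem \ref{fundeqnsThm}, Lemma \ref{thekeylem}, and the uniqueness of the fundamental operators of a tetrablock contraction. First I would invoke Lemma \ref{thekeylem} to note that $(T_1,T_2,T)$ is a tetrablock contraction, so it genuinely has a pair of fundamental operators: by definition these are operators on $\mathcal{D}_T$ whose numerical radii are at most one and which satisfy the defining relations (\ref{fundamental ops}), namely $A-B^*T=D_TF_1D_T$ and $B-A^*T=D_TF_2D_T$, here with $A=T_1$ and $B=T_2$.

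The key observation is that these relations are literally equations (\ref{fundeqns}) of Theorem \ref{fundeqnsThm}. Substituting $A=T_1$ and $B=T_2$ turns (\ref{fundamental ops}) into $T_1-T_2^*T=D_TF_1D_T$ and $T_2-T_1^*T=D_TF_2D_T$, and Theorem \ref{fundeqnsThm} asserts precisely that the operators $(F_1,F_2)=(\Lambda^*P^\perp U\Lambda,\Lambda^*U^*P\Lambda)$ built from the And\^o tuple $(\mathcal{F},\Lambda,P,U)$ satisfy these very equations. Since, as recorded just after the display (\ref{fundamental ops}), any two operators on $\mathcal{D}_T$ satisfying (\ref{fundamental ops}) coincide, the pair $(F_1,F_2)$ must equal the fundamental operators of $(T_1,T_2,T)$. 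In particular the numerical-radius bound comes for free, because uniqueness is forced by the equations alone and the fundamental operators are known to exist.

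For the second pair I would run the identical argument on the commuting contractions $(T_1^*,T_2^*)$, whose product is $T^*=T_1^*T_2^*$ by commutativity, and which by Lemma \ref{thekeylem} form the tetrablock contraction $(T_1^*,T_2^*,T^*)$. Applying Theorem \ref{fundeqnsThm} to $(T_1^*,T_2^*)$ as licensed by Remark \ref{fundeqnsThm-Rmk}, with And\^o tuple $(\mathcal{F}_*,\Gamma,P',U')$, shows that $(G_1,G_2)=(\Gamma^*P'^\perp U'\Gamma,\Gamma^*U'^*P'\Gamma)$ satisfy $T_1^*-T_2T^*=D_{T^*}G_1D_{T^*}$ and $T_2^*-T_1T^*=D_{T^*}G_2D_{T^*}$, which are exactly the defining relations (\ref{fundamental ops}) for the fundamental operators of $(T_1^*,T_2^*,T^*)$. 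Uniqueness again identifies $(G_1,G_2)$ with those fundamental operators.

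There is no real analytic obstacle here: all the substantive work, verifying that the And\^o-tuple operators satisfy the structural identities and that such solutions are unique, is already contained in Theorem \ref{fundeqnsThm} and its proof. The only points demanding care are bookkeeping, namely matching the roles of $A,B$ in (\ref{fundamental ops}) with $T_1,T_2$ (respectively $T_1^*,T_2^*$) in the correct order, and confirming that the distinguished contraction of the second tetrablock triple is $T^*$ with defect space $\mathcal{D}_{T^*}$, so that the equations produced via Remark \ref{fundeqnsThm-Rmk} line up verbatim with the fundamental-operator relations.
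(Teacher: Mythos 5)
Your proposal is correct and matches the paper's intended argument exactly: the paper derives this lemma precisely by combining Theorem \ref{fundeqnsThm} (and Remark \ref{fundeqnsThm-Rmk} for the adjoint pair) with the uniqueness of solutions to the defining relations (\ref{fundamental ops}), just as you do. Your bookkeeping of $A=T_1$, $B=T_2$ and the observation that the numerical-radius bound is inherited automatically from uniqueness are both sound.
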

A normal boundary dilation for the tetrablock was found in \cite{sir's tetrablock paper,sir and me1} under the conditions that the fundamental operators $F_1$, $F_2$ satisfy $[F_1,F_2]=0$ and $[F_1,F_1^*]=[F_2^*,F_2]$. These conditions are not necessarily satisfied by the fundamental operators (as in (\ref{tet-fund})) of $(T_1,T_2,T_1T_2)$, where $(T_1,T_2)$ is a pair of commuting contractions. Therefore the constructions given in \S \ref{Schaffer} and \S \ref{Douglas} are {\it{not}} direct applications of the tetrablock theory.

We now explain why the following model for special tetrablock contractions is a generalized version of both the Berger-Coburn-Lebow and Das-Sarkar-Sarkar models.
\begin{theorem}\label{Sau}[Sau, \cite{sau}]
Let $(A,B,T)$ be a tetrablock contraction on a Hilbert space $\mathcal{H}$ such that $T$ is pure and let $G_1,G_2$ in $\mathcal B(\mathcal D_{T^*})$ be the fundamental operators of $(A^*,B^*,T^*)$. Then $\mathcal{Q}:=H^2(\mathcal{D}_{T^*})\ominus\Theta_TH^2(\mathcal{D}_T)$ is joint $(M_{G_1^*+zG_2},M_{G_2^*+zG_1},M_z)$-invariant and
$$(A,B,T) \text{ is unitarily equivalent to }(P_{\mathcal Q}M_{G_1^*+zG_2}|_{\mathcal Q}, P_{\mathcal Q}M_{G_2^*+zG_1}|_{\mathcal Q},P_{\mathcal Q}M_z|_{\mathcal Q})$$ via the unitary $\mathcal O:\mathcal H\to \mathcal Q$.
\end{theorem}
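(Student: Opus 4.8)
The plan is to show that the observability operator $\mathcal{O}$ of (\ref{the-iso}) itself implements the asserted unitary equivalence, in exact parallel with the proof of Theorem \ref{DSS-compression}; the only genuinely new ingredient is a pair of one-sided ``fundamental operator'' identities valid for an arbitrary tetrablock contraction rather than for a product $T_1T_2$.

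First I would record the two facts about $\mathcal{O}$ that use only purity of $T$. A telescoping computation gives $\|\mathcal{O}h\|^2=\sum_{n\geq 0}\|D_{T^*}T^{*n}h\|^2=\|h\|^2-\lim_n\|T^{*n}h\|^2=\|h\|^2$, so $\mathcal{O}:\mathcal{H}\to H^2(\mathcal{D}_{T^*})$ is an isometry, and by the Sz.-Nagy--Foias model (\ref{represenQ}) its range is exactly $\mathcal{Q}=(\Theta_TH^2(\mathcal{D}_T))^\perp$; thus $\mathcal{O}:\mathcal{H}\to\mathcal{Q}$ is unitary. A one-line shift computation also gives $\mathcal{O}T^*=M_z^*\mathcal{O}$, which settles the third coordinate.

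The crux is to establish the analogues of Lemma \ref{tet-fund-rels}, namely the operator identities
$$
D_{T^*}A^*=G_1D_{T^*}+G_2^*D_{T^*}T^* \quad\text{and}\quad D_{T^*}B^*=G_2D_{T^*}+G_1^*D_{T^*}T^*.
$$
I would prove the first by testing against vectors $D_{T^*}k$: expanding $\langle D_{T^*}A^*h,D_{T^*}k\rangle=\langle(I-TT^*)A^*h,k\rangle$ and using the commutativity $T^*A^*=A^*T^*$ reduces the left-hand side to $\langle A^*h,k\rangle-\langle A^*T^*h,T^*k\rangle$; on the right-hand side, the defining equations of the fundamental operators of $(A^*,B^*,T^*)$, namely $A^*-BT^*=D_{T^*}G_1D_{T^*}$ and $B^*-AT^*=D_{T^*}G_2D_{T^*}$, make the two cross terms $\langle BT^*h,k\rangle$ cancel, leaving precisely the same expression. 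The second identity is symmetric. This is the step I expect to be the main obstacle, since it is where the tetrablock hypothesis (through the existence and defining relations of $G_1,G_2$ in (\ref{fundamental ops})) and the commutativity of the triple are genuinely used; converting the \emph{symmetric} fundamental equations into these \emph{one-sided} relations is the whole point.

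Finally I would feed these identities into $\mathcal{O}$. With $\Phi(z)=G_1^*+zG_2$ one computes $M_\Phi^*\mathcal{O}h=\sum_m z^m\otimes(G_1D_{T^*}+G_2^*D_{T^*}T^*)T^{*m}h$; substituting the first identity and using $A^*T^{*m}=T^{*m}A^*$ turns the right-hand side into $\sum_m z^m\otimes D_{T^*}T^{*m}A^*h=\mathcal{O}A^*h$, whence $\mathcal{O}A^*=M_\Phi^*\mathcal{O}$, and likewise $\mathcal{O}B^*=M_\Psi^*\mathcal{O}$ with $\Psi(z)=G_2^*+zG_1$. Because $\mathcal{Q}=\operatorname{Ran}\mathcal{O}$, these intertwiners show that $M_\Phi^*,M_\Psi^*,M_z^*$ all leave $\mathcal{Q}$ invariant, i.e.\ $\mathcal{Q}$ is jointly co-invariant (equivalently $\Theta_TH^2(\mathcal{D}_T)$ is jointly invariant), so the compressions $P_{\mathcal{Q}}M_\Phi|_{\mathcal{Q}}$, $P_{\mathcal{Q}}M_\Psi|_{\mathcal{Q}}$, $P_{\mathcal{Q}}M_z|_{\mathcal{Q}}$ have adjoints $M_\Phi^*|_{\mathcal{Q}}$, $M_\Psi^*|_{\mathcal{Q}}$, $M_z^*|_{\mathcal{Q}}$. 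Conjugating each relation by the unitary $\mathcal{O}$ then yields $(A,B,T)=\mathcal{O}^*(P_{\mathcal{Q}}M_\Phi|_{\mathcal{Q}},P_{\mathcal{Q}}M_\Psi|_{\mathcal{Q}},P_{\mathcal{Q}}M_z|_{\mathcal{Q}})\mathcal{O}$, which is the claimed unitary equivalence via $\mathcal{O}$.
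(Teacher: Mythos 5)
Your proposal is correct, but a comparison with ``the paper's own proof'' is slightly moot here: Theorem \ref{Sau} is quoted from \cite{sau} and is not proved in this paper at all. What the paper does contain is exactly your argument in the special case $(A,B,T)=(T_1,T_2,T_1T_2)$: Lemma \ref{tet-fund-rels} establishes the one-sided identities $\Gamma D_{T^*}T_1^*=H_1\Gamma D_{T^*}+H_2^*\Gamma D_{T^*}T^*$ by an explicit computation with the And\^o tuple $(\mathcal{F}_*,\Gamma,P',U')$, Theorem \ref{thedemi2ndmain} feeds them into $\tilde\Pi=\Pi_\Gamma\Pi_D$, and the proof of Theorem \ref{DSS-compression} specializes to $\mathcal{R}=0$ and $\operatorname{Ran}\mathcal{O}=\mathcal{Q}$. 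Your route differs in the one place it has to: for a general tetrablock contraction there is no And\^o tuple to compute with, so you derive the one-sided relations $D_{T^*}A^*=G_1D_{T^*}+G_2^*D_{T^*}T^*$ and $D_{T^*}B^*=G_2D_{T^*}+G_1^*D_{T^*}T^*$ abstractly from the defining equations (\ref{fundamental ops}) of the fundamental operators of $(A^*,B^*,T^*)$. That step is the genuine content and it checks out: testing against the dense set $\{D_{T^*}k\}$, the left side equals $\langle A^*h,k\rangle-\langle A^*T^*h,T^*k\rangle$ by commutativity of $A$ and $T$, while on the right the terms $\pm\langle BT^*h,k\rangle$ coming from $A^*-BT^*=D_{T^*}G_1D_{T^*}$ and $B^*-AT^*=D_{T^*}G_2D_{T^*}$ cancel, leaving the same expression. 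The remaining steps (unitarity of $\mathcal{O}$ onto $\mathcal{Q}$ via (\ref{represenQ}), the intertwinings $\mathcal{O}A^*=M_{G_1^*+zG_2}^*\mathcal{O}$, $\mathcal{O}B^*=M_{G_2^*+zG_1}^*\mathcal{O}$, $\mathcal{O}T^*=M_z^*\mathcal{O}$, and passing to compressions on the co-invariant subspace $\mathcal{Q}$) are exactly the paper's mechanism. One cosmetic point: the theorem's phrase ``$\mathcal{Q}$ is joint $(M_{G_1^*+zG_2},M_{G_2^*+zG_1},M_z)$-invariant'' must be read as co-invariant (equivalently, $\Theta_TH^2(\mathcal{D}_T)$ is jointly invariant), which is precisely what your intertwinings deliver; your reading is the right one.
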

So, if $(T_1,T_2)$ is a pair of commuting contractions such that $T=T_1T_2$ is pure, then choosing $(A,B,T)=(T_1,T_2,T)$ in Theorem \ref{Sau}, we see that by Lemma \ref{thefundtheproof}, Theorem \ref{D-S-S} and Theorem \ref{DSS-compression} follow.

Since the characteristic function of an isometry is zero, the space $\mathcal Q$ in Theorem \ref{Sau} is $H^2(\mathcal{D}_{T^*})$, hence when $T$ is an isometry
$$(A,B,T) \text{ is unitarily equivalent to }(P_{\mathcal Q}M_{G_1^*+zG_2}|_{\mathcal Q}, P_{\mathcal Q}M_{G_2^*+zG_1}|_{\mathcal Q},P_{\mathcal Q}M_z|_{\mathcal Q}).$$ Now choosing $(A,B,T)=(V_1,V_2,V_1V_2)$ for a pair $(V_1,V_2)$ of commuting isometries, the Berger-Coburn-Lebow model follows from Lemma \ref{justlikethat} and the discussion following it.

\subsection{Future research}\label{FutureResearch}
\begin{enumerate}
\item[(I)]There are at least four different proofs of the classical commutant lifting theorem, see Chapter VII of \cite{FO-FR}. One of the proofs is due to Douglas, Muhly and Pearcy \cite{DMP}. They used the explicit structure of the minimal isometric dilation constructed by Sch\"affer to construct a lifting. A possible application of our explicit constructions of And\^o dilation is the commutant lifting problem for the bidisk: {\em{Given a pair of commuting contractions $(T_1,T_2)$ on a Hilbert space $\mathcal{H}$ with $(V_1,V_2)$ acting on $\mathcal{H}\oplus H^2(\mathcal{F})$ as its And\^o dilation as constructed in Theorem \ref{themainresult} and a bounded operator $X$ (commutant) on $\mathcal{H}$ commuting with $T_1$ and $T_2$, find a necessary and sufficient condition on $X$ for there to exist an operator $Y$ acting on $\mathcal{H}\oplus H^2(\mathcal{F})$ such that $Y$ commutes with $V_1$ and $V_2$, $Y$ is co-extension (lifting) of $X$ with the operator norm $\|X\|$.}} In the case when $\mathcal{H}$ is some reproducing kernel Hilbert space on the bidisk and $T_1,T_2$ are the compressions of the multiplication operators by the co-ordinate functions to a co-invariant subspace, a commutant lifting theorem (for the polydisk in general) was obtained in Theorem 5.1 of \cite{BLT}. An interesting direction for future research would be to consider an arbitrary pair of commuting contractions and construct a lifting of commuting operators using the explicit structure of And\^o dilation $(V_1,V_2)$.
\item[(II)]It will be an interesting future research to find results analogous to Theorem \ref{uniqueness-pure} and Theorem \ref{charc-admiss} for a general pair of commuting contractions.

\item[(III)] The idea of the construction of a tetrablock isometric dilation in \cite{sir's tetrablock paper} is invoked in our first construction of And\^o dilation. Later in \cite{sir and me1} a tetrablock unitary dilation was constructed. On the other hand, if $(U_1,U_2)$ is an And\^o unitary dilation of a pair $(T_1,T_2)$ of commuting contractions, then one easily sees that $(U_1,U_2,U_1U_2)$ is a tetrablock unitary dilation of $(T_1,T_2,T_1T_2)$. But one can check that, unfortunately, a similar use of the ideas invoked in \cite{sir and me1} does not work for a Sch\"affer-type construction of And\^o unitary dilation. So a Sch\"affer-type construction of And\^o unitary dilation still remains open.
\end{enumerate}

\section{Acknowledgement}The author thanks Prof. B. Krishna Das for reading a previous version of this paper and giving helpful suggestions and comments. The author wishes to express gratitude to Prof. T. Bhattacharyya for a short inspiring discussion at the beginning of this project. The author also wishes to profusely thank Prof. Joseph A. Ball for reading previous versions of this paper with immense patience and for suggesting Theorem \ref{charc-admiss}.

\end{document}